\documentclass[11pt,reqno, a4paper]{article}

\usepackage[LCY,T1]{fontenc}

\usepackage[russian,main=english]{babel}
\usepackage[utf8]{inputenc}
\usepackage[T1]{fontenc}
\usepackage{siunitx}
\sisetup{output-exponent-marker = \mathtt{E}}

\usepackage{pgfplots}
\pgfplotsset{compat=1.18}
\usepackage{algorithm, xpatch}
\usepackage[noend]{algpseudocode}
\usepackage{anyfontsize}
\usepackage{amsmath, amsthm, amssymb}
\usepackage{mathtools}
\usepackage{thmtools}
\usepackage{microtype}
\usepackage{csquotes}
\usepackage{comment}
\usepackage{enumitem}
\usepackage{booktabs}
\usepackage{caption}
\usepackage{xfrac}
\usepackage{scalerel}
\usepackage[a4paper, left=2.5cm, right=2.5cm, top=2.5cm, bottom=3cm, footskip=1.6cm]{geometry}
\numberwithin{equation}{section}
\renewcommand{\theHequation}{\thesection.\arabic{equation}}

\newtheorem{theorem}{Theorem}[section]
\newtheorem{lemma}[theorem]{Lemma}
\newtheorem{corollary}[theorem]{Corollary}
\newtheorem{prop}[theorem]{Proposition}
\newtheorem{remark}[theorem]{Remark}
\newtheorem{definition}[theorem]{Definition}

\usepackage{hyperref}
\usepackage[nameinlink]{cleveref}
\crefname{prop}{proposition}{propositions}
\Crefname{prop}{Proposition}{Propositions}

\newcommand{\N}{\mathbb{N}}
\newcommand{\Z}{\mathbb{Z}}
\newcommand{\R}{\mathbb{R}}
\newcommand{\C}{\mathbb{C}}

\newcommand{\rlasso}{\texttt{rLasso}}
\newcommand{\omp}{\texttt{OMP}}
\newcommand{\cosamp}{\texttt{CoSaMP}}

\DeclareMathOperator{\spann}{span}
\DeclareMathOperator{\supp}{supp}
\DeclareMathOperator*{\argmin}{argmin}
\DeclareMathOperator*{\argmax}{argmax}

\makeatletter
\def\blfootnote{\xdef\@thefnmark{}\@footnotetext}
\xpatchcmd{\algorithmic}{\itemsep\z@}{\itemsep=.4ex}{}{}
\makeatother

\makeatletter
\def\@seccntformat#1{\@ifundefined{#1@cntformat}%
   {\csname the#1\endcsname\quad}  
   {\csname #1@cntformat\endcsname}
}

\let\oldappendix\appendix 
\renewcommand\appendix{%
    \oldappendix
    \newcommand{\section@cntformat}{\appendixname~\thesection\quad}
    \renewcommand{\theHsection}{appendix.\Alph{section}}
    \renewcommand{\theHsubsection}{appendix.\Alph{section}.\arabic{subsection}}
    \renewcommand{\theHsubsubsection}{appendix.\Alph{section}.\arabic{subsection}.\arabic{subsubsection}}
    \renewcommand{\theHequation}{appendix.\Alph{section}.\arabic{equation}}
    \renewcommand{\theHtheorem}{appendix.\Alph{section}.\arabic{theorem}}
    \renewcommand{\theHfigure}{appendix.\Alph{section}.\arabic{figure}}
    \renewcommand{\theHtable}{appendix.\Alph{section}.\arabic{table}}
}
\makeatother

\begin{document}

\title{High-dimensional sparse recovery from function samples --\\
Decoders, guarantees and instance optimality}\blfootnote{\textit{Keywords and phrases:} multivariate approximation; best $n$-term approximation; uniform norm; rate of convergence; sampling recovery.}\blfootnote{\textit{2020 Mathematics subject classification:} 42A10, 94A20, 41A46, 46E15, 42B35, 41A25, 41A17, 41A63}

\author{Moritz Moeller $\!{}^{a}$,
Sebastian Neumayer $\!{}^{a}$,
Kateryna~Pozharska $\!{}^{a,b}$,\\
Tizian Sommerfeld $\!{}^{a}$,
Tino Ullrich $\!{}^{a,}$\footnote{Corresponding author, Email:
tino.ullrich@math.tu-chemnitz.de}\\\\
${}^{a}\!$ Chemnitz University of Technology, Faculty of Mathematics\\[2mm]
${}^{b}\!$ Institute of Mathematics of NAS of Ukraine}

\date{January 20, 2026}

\maketitle

\begin{abstract} 
We investigate the reconstruction of multivariate functions from samples using sparse recovery techniques.
For Square Root Lasso, Orthogonal Matching Pursuit, and Compressive Sampling Matching Pursuit, we demonstrate both theoretically and empirically that they allow us to recover functions from a small number of random samples.
In contrast to Basis Pursuit Denoising, the deployed decoders only require a search space $V_J$ spanned by dictionary elements indexed by $J$ and a sparsity parameter $n$ to guarantee an $L_2$-approximation error decaying no worse than a best $n$-term approximation error and the truncation error with respect to the search space $V_J$ and the uniform norm.
We show that this happens simultaneously for all admissible functions if the number of samples scales as $n\log^2 n\log |J|$, coming from known bounds for the RIP for matrices built upon bounded orthonormal systems.
As a consequence, we obtain bounds for sampling widths in function classes.
In addition, we establish lower bounds on the required sample complexity, which show that the log-factor in $\vert J \vert$ is indeed necessary to obtain such {\em instance-optimal} error guarantees. 
Finally, we conduct several numerical experiments to show that our theoretical bounds are reasonable and compare the discussed decoders in practice.
\end{abstract} 

\section{Introduction} 

Many real-world applications aim to reveal correlations and dependencies in data.
Mathematically, we model these connections between different variables by a function $f\colon \Omega \to \mathbb{C}$, and the only way of accessing $f$ is through sampling points $\mathbf{X}=\{x^1,\ldots,x^m\}\subset\Omega$.
Thus, we have to approximate the unknown function $f$ from its values $f(\mathbf{X})=(f(x^1),\ldots, f(x^m))^{\top}\in\mathbb{C}^m$ at the points $\mathbf{X}$, often referred to as decoding.
In practice, the samples $f(\mathbf{X})$ are often perturbed by noise.

We approximate $f$ using a countable dictionary $\mathcal{B} = \{b_j\in C(\Omega)\colon \; j \in I\}$ consisting of continuous and, in most cases, mutually $L_2(\mu)$-orthogonal functions, where $\mu$ is a probability measure.
More precisely, we aim at approximating $f$ by (finitely many) atoms $b_j$ indexed by $j \in J \subset I$, for which the error $\lVert f - \sum_{j\in J} c_j\cdot b_j\rVert_{V}$ is small for some suitable norm $V$.
To this end, we can interpret clean samples of $f$ as noisy samples of another function $g\in V_J$ from the search space $V_J \coloneqq V_J(\mathcal{B}) \coloneqq \spann_{\mathbb{C}} \{b_j\colon \; j \in J\}$ in which $g$ is \enquote{close} to $f$.
To recover the coefficients $\mathbf{c} \in \mathbb{C}^{|J|}$, we have to solve the underdetermined linear system
\begin{equation}\label{eq:LinearSystem}
    \mathbf{A}\cdot\mathbf{c} = \mathbf{y},
\end{equation}
where
\begin{equation}
    \mathbf{A} = m^{-\sfrac{1}{2}} \bigl(b_j\bigl( x^k \bigr)\bigr)_{k=1, \ldots, m,\,  j\in J}\,, \qquad \mathbf{y} = m^{-\sfrac{1}{2}} \bigl(f\bigl( x^k \bigr)\bigr)_{k=1, \ldots, m}.
\end{equation}
Here, we implicitly assume that the index set $J$ is equipped with an enumeration.
Below, we refer to a (nonlinear) solution operator $T_{\mathbf{X}}\colon V\to V_J$ for \eqref{eq:LinearSystem} with $f(\mathbf X) \mapsto \sum_{j\in J} c_j\cdot b_j$ as decoder.
These can be naturally extended to reconstruct from noisy samples.

The recovery of $f$ turns out to be particularly easy if $f$ can be well-represented with only a few atoms from the dictionary $\mathcal{B}$ (so-called sparsity).
This is quantified by the best $n$-term approximation error
\begin{equation}
    \sigma_n(f)_V\coloneqq\sigma_n(f;\mathcal{B})_V \coloneqq \inf\limits_{\substack{J \subset I\\\lvert J \rvert \leq n}} \,\inf\limits_{\substack{c_j\in\mathbb{C}\\j\in J}}\,\Bigl\lVert f-\sum\limits_{j\in J}c_jb_j\Bigr\rVert_V
\end{equation}
with respect to a norm $\lVert\cdot\rVert_V$.
On the other hand, the truncation error with respect to the search space $V_J$ for a fixed $J\subset I$ is defined by
\begin{equation}\label{eq:ApproxError}
	E_J(f)_V\coloneqq E_J(f;\mathcal{B})_V\coloneqq \inf_{g \in V_J}\lVert f - g\rVert_V \leq \|f-P_Jf \|_{V},
\end{equation}
where $P_J f$ denotes the $L_2(\mu)$-orthogonal projection with $P_J f = \sum_{k\in J} \langle f,b_k\rangle b_k$, which is usually not the best approximation in $V$ (unless $V=L_2(\mu)$). 

To exploit the sparsity of $f$, we focus on the nonlinear decoders Square Root Lasso (\rlasso), see \cite{ABB19,BeCheWa11,BBH24,PeJu22}, as well as Orthogonal Matching Pursuit (\omp), see \cite{DaTe24,KuRa08,Zha11}, and its latest variant Compressive Sampling Matching Pursuit (\cosamp), see \cite{NeeTr09}.
The greedy decoders \omp \ and \cosamp \ utilize an iterative selection process to exploit the intrinsic sparsity of the target function $f$.
Both algorithms only require the search space $V_J$ and a sparsity parameter $n$ as \enquote{parameters}, and produce an output vector $\mathbf c$ that  contains a fixed number $\mathcal O(n)$ of nonzero entries.
Their analysis requires the restricted isometry property (RIP) of the matrix $\mathbf A$ according to the sparsity level $n$, see \cite[Chapt.\ 6]{FoRa13} and \cite{Zha11}.
For the convenience of the reader, we provide a self-contained collection of all required results in \hyperref[app:RIPNSP]{Appendix \ref*{app:RIPNSP}}. 
Following a different strategy, \rlasso\ recovers the coefficients $\mathbf c$ by solving the convex optimization program
\begin{equation} \label{eq:rLASSOIntro}
    \argmin_{\mathbf{c} \in \mathbb{C}^{|J|}} \bigl\{\lVert\mathbf{c}\rVert_{\ell_1} + \lambda\lVert\mathbf{A\cdot c - \mathbf{y}}\rVert_{\ell_2}\bigr\},
\end{equation}
whose properties have been studied, for example, in \cite{ABB19,AdcockBrugWebster22,BeCheWa11,PeJu22}.
\rlasso \ is a ``noise-blind'' decoder, which means that the regularization parameter $\lambda>0$ is chosen universally depending on $n$ and independent of the noise level.
An alternative to the sparsity promoting regularization \eqref{eq:rLASSOIntro} is to perform sparse optimization over the dictionary $\mathcal B$ directly, see \cite{HerNeu2025} and the references therein for details.  
In contrast to recent least-squares approaches like \cite{DKU23, KamUllVol2021, KrUl21, NSU22}, all the three discussed decoders require neither prior regularity assumptions nor knowledge about \enquote{optimal} search spaces $V_J$ for the truncation error in the sense of \eqref{eq:ApproxError}.
They  automatically detect the most important coefficients of the target function $f$ and are therefore rather universal methods.

A central challenge lies in the selection of the search spaces $V_J$ and the optimal sampling points $\mathbf{X}$.
For non-linear decoders, $V_J$ does not play such a crucial role as for (weighted) least squares decoders \cite{CoMi16}, where one has to know the {\em optimal} subspace for the considered approximation, see \Cref{rem:lsqr}.
This observation is illustrated in our numerical Example 2.
Instead, we only require mild approximation properties of $V_J$ while considering independent and identically distributed (iid) points $\mathbf{X}$ sampled from the orthogonality measure $\mu$. 
In the first part of the paper, we investigate the sample size $m = \lvert\mathbf{X}\rvert$ required for the deployed decoders $T_{\mathbf{X}}$ to achieve an approximation error comparable to the truncation error of $\|f-P_Jf\|_{L_\infty}$.
In particular, from Theorem \ref{thm:Error_Recovery_Ops} for $2\leq q\leq \infty$ and $f\in \mathcal{A}_1$, we establish for \omp\ and \rlasso \ the universal bound 
\begin{gather}\label{eq:BoundError}
	\lVert f - T_{\mathbf{X}}(f)\rVert_{L_q(\mu)} \leq C_{\mathcal{B}} n^{\sfrac{1}{2}-\sfrac{1}{q}}\bigl(n^{-\sfrac{1}{2}}\sigma_n(f;\mathcal{B})_{\mathcal{A}_1} + \|f-P_J f\|_{L_\infty}\bigr)
\end{gather}
in terms of the best $n$-term approximation with respect to the Wiener space $\mathcal{A}_1\coloneqq\mathcal{A}_1(\mathcal{B})$ as defined in \eqref{eq:DefWiener}.
In \eqref{eq:BoundError}, the constant $C_{\mathcal{B}}>0$ depends only on the dictionary $\mathcal{B}$. We do not only consider universally bounded dictionaries, i.e., where $\sup_{j\in I} \|b_j\|_{L_\infty} < \infty$. In \Cref{Legendre} we extend the results also to special unbounded systems if $q=2$.
The bound \eqref{eq:BoundError} holds with high probability for all $f \in \mathcal{A}_1$ simultaneously if the number of samples $m$ scales (roughly) as 
\begin{equation}\label{eq:sampl}
    m\gtrsim n\log^2 n\log|J|.
\end{equation}
For \cosamp\ the error guarantee looks slightly different as we indicated in Theorem \ref{thm:Error_Recovery_Cosamp} below.  
The dependence of the bound \eqref{eq:BoundError} on the individual sparsity of $f$ relates to \emph{instance optimality} as introduced by Cohen, Dahmen, DeVore in \cite{CDD09}.
Based on techniques from combinatorics \cite{FouPajRauUll10, FoRa13}, we also establish a lower bound of the form $m \gtrsim n\log(\lvert J\rvert/4n)$ for the number of required sampling points if \eqref{eq:BoundError} shall hold for all admissible $f$, see \Cref{sec:InstanceOptimality}.
In particular, we cannot get rid of a log-factor in $\vert J \vert$.
Further, we provide variants of our results for trigonometric approximation in \Cref{sec:Fourier}, and point out their impact on sampling complexity issues for function spaces.

Note that the drawn sampling points $\mathbf X$ work for all admissible $f$ simultaneously, such that the error bound \eqref{eq:BoundError} holds for all $f \in \mathcal{A}_1$ with high probability.
This allows for considering worst-case errors with respect to a whole function space $\mathbf{F}\subset V$, a concept commonly referred to as sampling widths
\begin{equation}\label{def:nonlin_sampling_numbers}
	\varrho_m (\mathbf{F})_V \coloneqq
	\inf_{\mathbf{X}= \{x^1, \dots, x^{m}\}}\,
	\inf_{T\colon \mathbb{C}^m \to V}\,
	\sup\limits_{\smash{\lVert f \rVert_{\mathbf{F}} \leq 1}} \lVert f - T(f(\mathbf{X})) \rVert_{V}. 
\end{equation}
The additional constraint of $T\colon \C^{m} \to V$ being linear leads to linear sampling widths $\varrho^{\rm lin}_m (\mathbf{F})_V$.
Now, \eqref{eq:BoundError} leads to the relation \eqref{eq:FS} in \Cref{rem:FuncClass} involving the sampling width and the best $n$-term width $\sigma_n(\mathbf{F};\mathcal{B})_{\mathcal{A}_1}$.
Incorporating results on best $n$-term widths, see \cite{DTU18,MPU25_1}, implies $\varrho_m (\mathbf{F})_{L_2} = o(\varrho^{\rm lin}_m (\mathbf{F})_{L_2})$
for various function spaces $\mathbf F \subset V\coloneqq L_2(\mu)$ including the weighted Wiener spaces $\mathcal{A}^r_\theta(\mathcal{B}) \subset \mathcal{A}_1(\mathcal{B})$ with $\theta\leq 1$ and $r>0$, see \cite{JUV23, MSU25}, or certain Sobolev-Besov spaces with mixed smoothness.
This means that the considered nonlinear decoders with random sampling points $\mathbf X$ outperform any linear decoder for those spaces $\mathbf{F}$.

To support our theoretical observations, we conduct extensive recovery experiments.
Regarding the dictionary $\mathcal{B}$, we restrict our attention to the popular Fourier and Chebyshev systems and tensorizations thereof. Note that by \Cref{Legendre} also Legendre polynomials and tensor products are admissible.  
To control the size of the search space $V_J$, we rely on hyperbolic crosses \cite{DTU18}.
For solving the \rlasso \ optimization program \eqref{eq:rLASSOIntro}, we rely on a restarted primal-dual algorithm based on \cite{AdcCol2025}. 
Concerning \omp, we solve the least-squares problem with an iterative update of the Cholesky decomposition \cite{StuChr2012}.
For \cosamp, we instead use an iterative least-squares solver like LSQR.
While the theoretical approximation bounds are equal, our conducted numerical experiments show that \rlasso \ and \cosamp \ are preferable if the number of required coefficients for a certain approximation accuracy is large, namely if the sparsity of $f$ is low.
This is not surprising since each update in \omp \ adds only one support point at the cost of solving a linear equation.
On the other hand, our experiments show that both \omp \ and \cosamp \ are superior to \rlasso \ for highly sparse solutions, i.e., if we only have to perform few iterations. 
Regarding \rlasso \ and \cosamp, we have no clear conclusion.
An interesting observation is that we often beat the theoretically guaranteed asymptotic rates (good pre-asymptotic behavior).
Specifically, the decoders exhibit an error decay rate consistent with the numerically computed best $n$-term approximation, thereby aligning with the theoretical bound in \eqref{eq:BoundError}.
This represents a substantial advancement over previous results for least-squares recovery reported in \cite{KamUllVol2021} and \cite{BarLutNag2023}. 

\paragraph{Notation} 
We denote with $\log(a)$ the natural logarithm of $a>0$ and define $[N]\coloneqq\{1,\ldots,N\}$ for $N \in \N$.
Throughout, vectors and matrices are typeset in boldface, and we equip these with the $\ell_p$-norms $\lVert\mathbf{v}\rVert_{\ell_p}\coloneqq (\sum_{k=1}^{N}\lvert v_k\rvert^p)^{\sfrac{1}{p}}$, $p\in[1,\infty)$.
Further, we define the \enquote{$\ell_0$-norm} as the number of non-zero entries of $\mathbf{v}$, namely $\lVert\mathbf{v}\rVert_{\ell_0}\coloneqq\lvert\{k\in[N]\colon \; v_k\neq 0\}\rvert$.
Given $\mathbf{v}\in\mathbb{C}^N$ and an index set $S\subset[N]$, we define $\mathbf{v}_S\coloneqq[v_k\cdot\mathbf{1}_S(k)]_{k=1}^N$, i.e.\ we set all entries with indices outside of $S$ to zero.
For $\mathbf{v}\in\mathbb{C}^N$ and $p\in[1,\infty]$, we define the best $n$-term approximation with respect to $\lVert\cdot\rVert_{\ell_p}$ as
\begin{equation}
\sigma_n(\mathbf{v})_{\ell_p}\coloneqq \inf_{\lVert\mathbf{z}\rVert_{\ell_0}\leq n}\lVert\mathbf{v}-\mathbf{z}\rVert_{\ell_p} = \biggl(\sum\limits_{k=n+1}^N \lvert v_k^{\ast}\rvert^p\biggr)^{\sfrac{1}{p}},
\end{equation}
where $\mathbf{v}^*=(v_k^{\ast})_{k\in[N]}$ denotes the non-increasing rearrangement of $\mathbf{v}$ with respect to absolute values.
This concept can be extended to infinite sequences in $\ell_p$.

For a topological space $\Omega$ equipped with some \emph{probability} measure $\mu$, let $L_p(\Omega, \mu)$ denote the Lebesgue spaces of $p$-integrable complex-valued functions with norm $\lVert f\rVert_{L_p}\coloneqq(\int_\Omega\lvert f\rvert^p\mathrm{d}\mu)^{\sfrac{1}{p}}$ for $p\in[1,\infty)$, and of essentially bounded functions with norm $\lVert f\rVert_{L_\infty}\coloneqq\operatorname{ess\, sup}_{x\in\Omega}\lvert f(x)\rvert$.
If $\mathcal{B}$ is an orthonormal basis of $L_2(\Omega,\mu)$, then there exist unique coefficients $[f] = ([f]_j)_{j\in I}$ such that $f=\sum_{j\in I}[f]_jb_j$.
These are given by $[f]_j = \langle f, b_j \rangle$, and we have $\sigma_n(f;\mathcal{B})_{\mathcal{A}_\theta}= \sigma_n([f])_{\ell_\theta}$ for any $f\in L_2(\Omega,\mu)$. 
The notation $C(\Omega)$ refers to the continuous complex-valued functions on $\Omega$.

For two sequences of non-negative real numbers $(a_n)_n$ and $(b_n)_n$ we write 
$a_n \lesssim b_n$ if there exists a constant $c>0$ such that 
$a_n \leq cb_n$ for all $n\in \mathbb{N}$.
Equivalently, we can use the Landau notation $a_n = \mathcal{O}(b_n)$.
Analogously, we define $\gtrsim$.
If both hold simultaneously, we write $a_n \asymp b_n$.
Finally, we require the notation $a_n = o(b_n)$, which means that $b_n>0$ for almost all $n\in \mathbb{N}$ and $\lim_{n\to \infty} a_n/b_n = 0$. 
\section{Recovery guarantees via \rlasso \ and \omp}\label{sec:RecoveryMethods}

We consider the two nonlinear decoders \emph{Square Root Lasso} (\rlasso) and \emph{Orthogonal Matching Pursuit} (\omp).
For the function recovery based on a dictionary $\mathcal{B}$, we fix a finite index set $J\subset I$, and define the matrix
\begin{gather}\label{eq:matrix}
	\mathbf{A} \coloneqq m^{-\sfrac{1}{2}}\left(b_j(x^l)\right)_{1 \leq l \leq m, j \in J} \in \mathbb{C}^{m\times \lvert J\rvert}
\end{gather}
and the vector $\mathbf{y} \coloneqq m^{-\sfrac{1}{2}} f(\mathbf{X}) \in \mathbb{C}^m$.
First, we specify \rlasso.

\begin{definition}[\rlasso]\label{def:square_root_lasso}
For $\lambda>0$, we define $R_{\mathbf{X}}^\lambda\colon C(\Omega) \to V_J$ by
\begin{gather}\label{eq:Lasso_Decoder}
	R_{\mathbf{X}}^\lambda(f) = \sum_{j \in J} r_j(\mathbf{y}) b_j,
\end{gather}
where $\mathbf{r}(\mathbf{y}) = [R_{\mathbf{X}}^\lambda(f)] \in \mathbb{C}^{\lvert J\rvert}$ is a (fixed) solution of the \rlasso \ minimization problem
\begin{gather}\label{eq:MinimizationProblem}
	\min_{\mathbf{z} \in \mathbb{C}^{\lvert J\rvert}} \lVert\mathbf{z}\rVert_{\ell_1} + \lambda \lVert\mathbf{A} \mathbf{z} - \mathbf{y}\rVert_{\ell_2}
\end{gather}
with the matrix $\mathbf{A}$ from \eqref{eq:matrix}, where we implicitly assume that there is a deterministic selection rule in the case of multiple solutions.
\end{definition}

\begin{definition}[\omp]\label{def:OMP} 	
For $n\in \mathbb{N}$, we define $P_{\mathbf{X}}^n\colon C(\Omega) \to V_J$ by
\begin{gather}\label{eq:OMP_Decoder}
    P_{\mathbf{X}}^{n}(f) = \sum_{j \in J} p_j^n(\mathbf{y}) b_j,
\end{gather}
where $\mathbf{p}^n(\mathbf{y}) =[P_{\mathbf{X}}^n(f)]\in\mathbb{C}^{\lvert J\rvert}$ is recursively defined through
\begin{align}
    S^{k+1}(\mathbf{y}) & = S^k \cup \operatorname{argmax} \bigl\{\bigl\lvert\left(\mathbf{A}^\ast \left(\mathbf{y} - \mathbf{A}\mathbf{p}^k\right)\right)_j\bigr\rvert\colon \;  1 \leq j \leq \lvert J\rvert\bigr\},\label{eq:OMP_iteration1}\\
    \mathbf{p}^{k+1}(\mathbf{y}) & = \operatorname{argmin}\big\{\lVert \mathbf{y} - \mathbf{A}\mathbf{p}\rVert_{\ell_2}\colon \; \supp(\mathbf{p}) \subset S^{k+1}\big\}\label{eq:OMP_iteration2}
\end{align}
with $S^0(\mathbf{y}) \coloneqq \emptyset$ and $\mathbf{p}^0(\mathbf{y}) \coloneqq \mathbf{0}$.
If the $\argmax$ is not a singleton, we select the smallest index $j$.
\end{definition}

\subsection{Analysis of \rlasso \ and \omp}

To derive recovery guarantees for \rlasso \ and \omp, we require two robust recovery results from the literature, the first of which describes a connection between the robust null space property and the restricted isometry property of a matrix, both whose definitions are provided below.

\begin{definition}[NSP and RIP]
Given a matrix $\mathbf{A}\in\mathbb{C}^{m\times N}$ with $m<N$ and $n\in\N$, we say
\begin{itemize}[itemsep=0pt, topsep=2pt, leftmargin=*]
    \item that $\mathbf{A}$ satisfies the $\ell_q$-robust null space property (NSP) of order $n<N$ for a norm $\lVert\cdot\rVert$ on $\mathbb{C}^m$ if there exist $\varrho\in(0,1)$ and $\tau>0$ such that
    \begin{equation}\label{eq:NSP}
        \lVert\mathbf{v}_S\rVert_{\ell_q} \leq \varrho n^{\sfrac{1}{q}-1}\lVert\mathbf{v}_{S^\mathrm{C}}\rVert_{\ell_1} + \tau \lVert{\mathbf{A}}\mathbf{v}\rVert
    \end{equation}
    for all $\mathbf{v}\in\mathbb{C}^N$ and all $S\subset[N]$ with $\lvert S\rvert\leq n$.
    In this case, we write $\mathbf{A}\in\operatorname{NSP}(n,q,\lVert\cdot\rVert,\varrho,\tau)$.
    \item that $\mathbf{A}$ satisfies the restricted isometry property (RIP) of order $n$ with RIP constant $\delta_n$ if we have
    \begin{equation}\label{eq:RIP}
        (1-\delta_{n})\lVert\mathbf{v}\rVert_{\ell_2}^2 \leq \lVert\mathbf{A}\mathbf{v}\rVert_{\ell_2}^2 \leq (1+\delta_{n})\lVert\mathbf{v}\rVert_{\ell_2}^2
    \end{equation}
    for all $\mathbf{v} \in \mathbb{C}^N$ with $\lVert\mathbf{v}\rVert_{\ell_0} \leq n$.
\end{itemize}
\end{definition}

Before turning to the recovery, we establish a relation between RIP and NSP, a version of which can be found in \cite[Thm.\ 6.26]{AdcockBrugWebster22}.
Below, we specify it in the form that is required for our theory.
A short proof is given in \hyperref[app:RIPNSP]{Appendix \ref*{app:RIPNSP}} for convenience.

\begin{theorem}[RIP implies $\ell_2$-robust NSP]\label{thm:RIP_implies_NSP}
If $\mathbf{A} \in \C^{m\times N}$ satisfies the RIP of order $2n$ with $\delta_{2n}<\sfrac{1}{3}$, see \eqref{eq:RIP}, then it satisfies the $\ell_2$-robust NSP of order $n$, i.e.\ $\mathbf{A}\in\operatorname{NSP}(n,2,\lVert\cdot\rVert_{\ell_2},\varrho,\tau)$, where the constants $\varrho \in (0,1)$ and $\tau>0$ depend only on $\delta_{2n}$.
\end{theorem}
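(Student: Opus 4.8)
The plan is to follow the classical route from Foucart--Rauhut (cf.\ \cite{FoRa13}), tuning the constants so that the clean threshold $\delta_{2n}<\sfrac{1}{3}$ suffices. Fix $\mathbf v\in\mathbb C^N$ and $S\subset[N]$ with $\lvert S\rvert\le n$, and write $S_0\coloneqq S$. First I would sort the entries of $\mathbf v$ indexed by $S^{\mathrm C}$ by decreasing absolute value and partition $S^{\mathrm C}$ into consecutive blocks $S_1,S_2,\dots$ of size $n$ (the last possibly smaller), so that $S_1$ collects the $n$ largest entries off $S$, $S_2$ the next $n$, and so on. Since $\lvert S_0\cup S_1\rvert\le 2n$, the lower RIP bound \eqref{eq:RIP} gives $\lVert\mathbf v_S\rVert_{\ell_2}\le\lVert\mathbf v_{S_0\cup S_1}\rVert_{\ell_2}\le (1-\delta_{2n})^{-\sfrac{1}{2}}\lVert\mathbf A\mathbf v_{S_0\cup S_1}\rVert_{\ell_2}$, which reduces the task to estimating $\lVert\mathbf A\mathbf v_{S_0\cup S_1}\rVert_{\ell_2}^2$.

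Next I would write $\mathbf A\mathbf v_{S_0\cup S_1}=\mathbf A\mathbf v-\sum_{k\ge2}\mathbf A\mathbf v_{S_k}$ and expand $\lVert\mathbf A\mathbf v_{S_0\cup S_1}\rVert_{\ell_2}^2=\langle\mathbf A\mathbf v_{S_0\cup S_1},\mathbf A\mathbf v\rangle-\sum_{k\ge2}\langle\mathbf A\mathbf v_{S_0\cup S_1},\mathbf A\mathbf v_{S_k}\rangle$. The first term is controlled by Cauchy--Schwarz together with the upper RIP bound, producing a factor $\sqrt{1+\delta_{2n}}\,\lVert\mathbf v_{S_0\cup S_1}\rVert_{\ell_2}\lVert\mathbf A\mathbf v\rVert_{\ell_2}$. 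The crux, and the step I expect to be the main obstacle, is the cross terms: a naive inner-product RIP estimate applied to $\mathbf v_{S_0\cup S_1}$ (support size up to $2n$) against $\mathbf v_{S_k}$ (support size $n$) would demand RIP of order $3n$. The fix is to split $S_0\cup S_1$ back into $S_0$ and $S_1$ and apply the standard consequence of the RIP, namely $\lvert\langle\mathbf A\mathbf u,\mathbf A\mathbf w\rangle\rvert\le\delta_{2n}\lVert\mathbf u\rVert_{\ell_2}\lVert\mathbf w\rVert_{\ell_2}$ for disjointly supported $\mathbf u,\mathbf w$ with combined support at most $2n$, separately to the pairs $(S_0,S_k)$ and $(S_1,S_k)$, each of which now fits within order $2n$. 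Using $\lVert\mathbf v_{S_0}\rVert_{\ell_2}+\lVert\mathbf v_{S_1}\rVert_{\ell_2}\le\sqrt2\,\lVert\mathbf v_{S_0\cup S_1}\rVert_{\ell_2}$ then collects the cross terms into $\sqrt2\,\delta_{2n}\lVert\mathbf v_{S_0\cup S_1}\rVert_{\ell_2}\sum_{k\ge2}\lVert\mathbf v_{S_k}\rVert_{\ell_2}$.

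It remains to estimate the block tail. Because every entry of $\mathbf v_{S_{k+1}}$ is dominated by the average magnitude on $S_k$, one has $\lVert\mathbf v_{S_{k+1}}\rVert_{\ell_2}\le n^{-\sfrac{1}{2}}\lVert\mathbf v_{S_k}\rVert_{\ell_1}$, and summing telescopes to $\sum_{k\ge2}\lVert\mathbf v_{S_k}\rVert_{\ell_2}\le n^{-\sfrac{1}{2}}\lVert\mathbf v_{S^{\mathrm C}}\rVert_{\ell_1}$. Inserting both estimates into the expansion and writing $a\coloneqq\lVert\mathbf v_{S_0\cup S_1}\rVert_{\ell_2}$ yields $a^2\le(1-\delta_{2n})^{-1}\bigl(\sqrt{1+\delta_{2n}}\,a\,\lVert\mathbf A\mathbf v\rVert_{\ell_2}+\sqrt2\,\delta_{2n}\,a\,n^{-\sfrac{1}{2}}\lVert\mathbf v_{S^{\mathrm C}}\rVert_{\ell_1}\bigr)$. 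Dividing by $a$ (the case $a=0$ being trivial) and recalling $\lVert\mathbf v_S\rVert_{\ell_2}\le a$ gives the $\ell_2$-robust NSP \eqref{eq:NSP} with $\varrho=\sqrt2\,\delta_{2n}/(1-\delta_{2n})$ and $\tau=\sqrt{1+\delta_{2n}}/(1-\delta_{2n})$, both depending on $\delta_{2n}$ alone. Finally I would verify $\varrho<1$, which is equivalent to $(\sqrt2+1)\delta_{2n}<1$, i.e.\ $\delta_{2n}<\sqrt2-1\approx0.414$; the hypothesis $\delta_{2n}<\sfrac{1}{3}$ comfortably guarantees this, completing the argument.
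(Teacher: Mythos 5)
Your proof is correct, and it takes a structurally different route from the paper's. The paper works only with $S_0$ equal to the $n$ largest entries of $\mathbf v$ (noting that this case implies the general one), applies the lower RIP bound to the $n$-sparse vector $\mathbf v_{S_0}$ alone, and pairs $S_0$ against every block $S_k$, $k\geq 1$. The price is that the resulting tail sum $\sum_{k\geq 1}\lVert\mathbf v_{S_{k-1}}\rVert_{\ell_1}$ contains the self-term $\lVert\mathbf v_{S_0}\rVert_{\ell_1}\leq n^{\sfrac{1}{2}}\lVert\mathbf v_{S_0}\rVert_{\ell_2}$, which must be absorbed into the left-hand side; this absorption is exactly what forces $\delta_{2n}<\sfrac{1}{3}$, and it yields $\varrho=\delta_{2n}/(1-2\delta_{2n})$, $\tau=\sqrt{1+\delta_{2n}}/(1-2\delta_{2n})$, so the hypothesis is the sharp threshold of that argument. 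You instead keep $S$ arbitrary, augment it by the block $S_1$ of the $n$ largest off-$S$ entries, apply the lower RIP at order $2n$ to $\mathbf v_{S_0\cup S_1}$, and handle the cross terms by splitting $(S_0\cup S_1,S_k)$ into $(S_0,S_k)$ and $(S_1,S_k)$ together with $\lVert\mathbf v_{S_0}\rVert_{\ell_2}+\lVert\mathbf v_{S_1}\rVert_{\ell_2}\leq\sqrt{2}\,\lVert\mathbf v_{S_0\cup S_1}\rVert_{\ell_2}$; since your tail $\sum_{k\geq 2}\lVert\mathbf v_{S_k}\rVert_{\ell_2}\leq n^{-\sfrac{1}{2}}\lVert\mathbf v_{S^{\mathrm C}}\rVert_{\ell_1}$ involves only off-$S$ entries, no absorption is needed. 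This buys you a better threshold, $\varrho=\sqrt{2}\,\delta_{2n}/(1-\delta_{2n})<1$ whenever $\delta_{2n}<\sqrt{2}-1\approx 0.414$, so the assumed bound $\sfrac{1}{3}$ holds with room to spare (this is essentially the mechanism behind \cite[Thm.\ 6.13]{FoRa13}), at the cost of the extra augmentation-and-splitting step; the paper's variant is shorter and produces the constants \eqref{eq:NSP_proof} that are referenced downstream in \Cref{prop:Recovery_vector_errors}, but your constants serve that purpose equally well since they are likewise monotone functions of $\delta_{2n}$ alone.
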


Now, we specify the robust recovery guarantees for \rlasso \ from \cite[Thm.\ 3.1]{PeJu22} for our setting.
For convenience, the original result is included in \hyperref[app:RIPNSP]{Appendix \ref*{app:RIPNSP}} as \Cref{thm:LassoNSP}.

\begin{prop}\label{prop:rLasso_for_NSP} Let $\mathbf{A} \in \mathbb{C}^{m\times N}$ satisfy $\mathbf{A}\in\operatorname{NSP}(n,2,\lVert\cdot\rVert_{\ell_2},\varrho,\tau)$.
Then for all $\mathbf{y} \in \mathbb{C}^m$ and $\mathbf{v} \in \mathbb{C}^{N}$, any solution $\mathbf{r}(\mathbf{y}) \in \mathbb{C}^{N}$ of the \rlasso \ minimization problem \eqref{eq:MinimizationProblem} with $\lambda = 3\tau n^{\sfrac{1}{2}}$ satisfies
\begin{equation}\label{eq:rLasso_NSP_lq}
	\lVert\mathbf{v}- \mathbf{r}(\mathbf{y}) \rVert_{\ell_q} \leq \beta\bigl(n^{\sfrac{1}{q}-1}\sigma_n(\mathbf{v})_{\ell_1} + n^{\sfrac{1}{q}-\sfrac{1}{2}}  \lVert\mathbf{A}\mathbf{v}-\mathbf{y}\rVert_{\ell_2}\bigr)
\end{equation}
for all $q\in[1,2]$ and a constant $\beta>0$ that depends only on $\varrho$ and $\tau$.
\end{prop}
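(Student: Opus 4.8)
The plan is to read this proposition off \Cref{thm:LassoNSP} (the \rlasso\ guarantee of \cite{PeJu22}) by specializing its parameters: one sets $\lambda = 3\tau n^{\sfrac{1}{2}}$ and collapses the abstract constants of that theorem into a single $\beta = \beta(\varrho,\tau)$, after which \eqref{eq:rLasso_NSP_lq} is exactly the displayed conclusion. To keep the argument self-contained, however, I would reproduce the underlying \enquote{cone condition plus NSP} reasoning. Throughout, write $\mathbf{e} \coloneqq \mathbf{r}(\mathbf{y}) - \mathbf{v}$ for the error and let $S\subset[N]$ with $\lvert S\rvert \le n$ realize the best $n$-term approximation of $\mathbf{v}$, so that $\lVert \mathbf{v}_{S^{\mathrm{C}}}\rVert_{\ell_1} = \sigma_n(\mathbf{v})_{\ell_1}$.

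First I would exploit the minimality of $\mathbf{r}(\mathbf{y})$ in \eqref{eq:MinimizationProblem}. Comparing the objective at $\mathbf{r}(\mathbf{y})$ with its value at $\mathbf{v}$ gives $\lVert\mathbf{r}(\mathbf{y})\rVert_{\ell_1} + \lambda\lVert\mathbf{A}\mathbf{r}(\mathbf{y})-\mathbf{y}\rVert_{\ell_2} \le \lVert\mathbf{v}\rVert_{\ell_1} + \lambda\lVert\mathbf{A}\mathbf{v}-\mathbf{y}\rVert_{\ell_2}$. Splitting the $\ell_1$-norms over $S$ and $S^{\mathrm{C}}$, applying the reverse triangle inequality on each block, and using $\lVert\mathbf{A}\mathbf{r}(\mathbf{y})-\mathbf{y}\rVert_{\ell_2} \ge \lVert\mathbf{A}\mathbf{e}\rVert_{\ell_2} - \lVert\mathbf{A}\mathbf{v}-\mathbf{y}\rVert_{\ell_2}$, I obtain a cone-type inequality $\lVert\mathbf{e}_{S^{\mathrm{C}}}\rVert_{\ell_1} + \lambda\lVert\mathbf{A}\mathbf{e}\rVert_{\ell_2} \le \lVert\mathbf{e}_S\rVert_{\ell_1} + 2\sigma_n(\mathbf{v})_{\ell_1} + 2\lambda\lVert\mathbf{A}\mathbf{v}-\mathbf{y}\rVert_{\ell_2}$.

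Next I would bring in the hypothesis $\mathbf{A}\in\operatorname{NSP}(n,2,\lVert\cdot\rVert_{\ell_2},\varrho,\tau)$. Applying \eqref{eq:NSP} to $\mathbf{e}$ and then Cauchy--Schwarz (valid since $\lvert S\rvert\le n$) yields $\lVert\mathbf{e}_S\rVert_{\ell_1} \le n^{\sfrac{1}{2}}\lVert\mathbf{e}_S\rVert_{\ell_2} \le \varrho\lVert\mathbf{e}_{S^{\mathrm{C}}}\rVert_{\ell_1} + \tau n^{\sfrac{1}{2}}\lVert\mathbf{A}\mathbf{e}\rVert_{\ell_2}$. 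Substituting this into the cone inequality is exactly where the choice $\lambda = 3\tau n^{\sfrac{1}{2}}$ pays off: it makes the coefficient $\lambda - \tau n^{\sfrac{1}{2}} = 2\tau n^{\sfrac{1}{2}}$ of $\lVert\mathbf{A}\mathbf{e}\rVert_{\ell_2}$ strictly positive, while $1-\varrho>0$, so I can solve the combined inequality for both nonnegative quantities and arrive at $\lVert\mathbf{e}_{S^{\mathrm{C}}}\rVert_{\ell_1} \lesssim \sigma_n(\mathbf{v})_{\ell_1} + n^{\sfrac{1}{2}}\lVert\mathbf{A}\mathbf{v}-\mathbf{y}\rVert_{\ell_2}$ and $n^{\sfrac{1}{2}}\lVert\mathbf{A}\mathbf{e}\rVert_{\ell_2} \lesssim \sigma_n(\mathbf{v})_{\ell_1} + n^{\sfrac{1}{2}}\lVert\mathbf{A}\mathbf{v}-\mathbf{y}\rVert_{\ell_2}$, with constants depending only on $\varrho,\tau$.

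Finally I would pass to the $\ell_q$-norm. Decomposing $\lVert\mathbf{e}\rVert_{\ell_q} \le \lVert\mathbf{e}_S\rVert_{\ell_q} + \lVert\mathbf{e}_{S^{\mathrm{C}}}\rVert_{\ell_q}$, the head is controlled by Hölder, $\lVert\mathbf{e}_S\rVert_{\ell_q}\le n^{\sfrac{1}{q}-\sfrac{1}{2}}\lVert\mathbf{e}_S\rVert_{\ell_2}$ together with the NSP bound on $\lVert\mathbf{e}_S\rVert_{\ell_2}$, while the tail is handled by the Stechkin-type block/shifting estimates of the robust recovery theory, which convert $\lVert\mathbf{e}_{S^{\mathrm{C}}}\rVert_{\ell_1}$ into $\lVert\mathbf{e}_{S^{\mathrm{C}}}\rVert_{\ell_q}$ at the cost of a factor $n^{\sfrac{1}{q}-1}$. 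Together these give $\lVert\mathbf{e}\rVert_{\ell_q} \lesssim n^{\sfrac{1}{q}-1}\lVert\mathbf{e}_{S^{\mathrm{C}}}\rVert_{\ell_1} + n^{\sfrac{1}{q}-\sfrac{1}{2}}\lVert\mathbf{A}\mathbf{e}\rVert_{\ell_2}$ for every $q\in[1,2]$, and inserting the bounds from the previous step produces precisely \eqref{eq:rLasso_NSP_lq}. I expect the tail estimate to be the main obstacle: obtaining the exact powers $n^{\sfrac{1}{q}-1}$ and $n^{\sfrac{1}{q}-\sfrac{1}{2}}$ uniformly in $q\in[1,2]$ requires carefully decomposing $S^{\mathrm{C}}$ into blocks of size $n$ and treating the leading tail block together with the head set $S$, so that the final constant $\beta$ stays independent of both $n$ and $q$.
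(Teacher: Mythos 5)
Your proposal is correct, but it proves the proposition by a genuinely different route than the paper. The paper's own proof is essentially a citation: it observes that $\lambda = 3\tau n^{\sfrac{1}{2}}$ meets the threshold \eqref{eq:lambdaLowerBound} of \Cref{thm:LassoNSP} (since $\tfrac{3+\varrho}{1+\varrho}\leq 3$ for $\varrho\in(0,1)$, so the choice is independent of $\varrho$), inserts this $\lambda$ into \eqref{eq:LassoNSP} to obtain the endpoint cases $q\in\{1,2\}$, and then covers $q\in(1,2)$ by interpolation of $\ell_q$ between $\ell_1$ and $\ell_2$. You instead re-derive the content of the cited theorem from scratch: the cone inequality from minimality of $\mathbf{r}(\mathbf{y})$ in \eqref{eq:MinimizationProblem}, the bound $\lVert\mathbf{e}_S\rVert_{\ell_1}\leq\varrho\lVert\mathbf{e}_{S^{\mathrm{C}}}\rVert_{\ell_1}+\tau n^{\sfrac{1}{2}}\lVert\mathbf{A}\mathbf{e}\rVert_{\ell_2}$ from \eqref{eq:NSP} and Cauchy--Schwarz (writing $\mathbf{e}=\mathbf{r}(\mathbf{y})-\mathbf{v}$ as you do), the observation that $\lambda-\tau n^{\sfrac{1}{2}}=2\tau n^{\sfrac{1}{2}}>0$ lets you solve simultaneously for $\lVert\mathbf{e}_{S^{\mathrm{C}}}\rVert_{\ell_1}$ and $n^{\sfrac{1}{2}}\lVert\mathbf{A}\mathbf{e}\rVert_{\ell_2}$, and a head/tail assembly in $\ell_q$. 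These steps are sound and the constants they produce depend only on $\varrho$ and $\tau$, as required. What each approach buys: yours is self-contained and treats all $q\in[1,2]$ uniformly with no interpolation step; the paper's is two lines long but leans on a black-box result.

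Two remarks on the one delicate point you flag, the tail estimate. First, your instinct is right that $\lVert\mathbf{e}_{S^{\mathrm{C}}}\rVert_{\ell_q}\leq n^{\sfrac{1}{q}-1}\lVert\mathbf{e}_{S^{\mathrm{C}}}\rVert_{\ell_1}$ cannot hold verbatim (a single spike in the tail violates it), so the leading tail block must go through the NSP. But be careful with \enquote{treating the leading tail block together with the head set $S$}: the NSP is assumed only of order $n$, so you may not apply \eqref{eq:NSP} to the combined set $S\cup S_1$ of cardinality up to $2n$; instead apply it to $S$ and to $S_1$ separately (each of size at most $n$), which only costs another factor depending on $\varrho,\tau$. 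Second, you can bypass the block decomposition entirely: in the $\ell_q$-assembly take the head set $T$ to be the $n$ largest entries of $\mathbf{e}$ itself; then Stechkin's inequality (\Cref{thm:Stechkin}) applied to the whole vector gives $\lVert\mathbf{e}_{T^{\mathrm{C}}}\rVert_{\ell_q}=\sigma_n(\mathbf{e})_{\ell_q}\leq n^{\sfrac{1}{q}-1}\lVert\mathbf{e}\rVert_{\ell_1}\leq n^{\sfrac{1}{q}-1}\bigl(\lVert\mathbf{e}_S\rVert_{\ell_1}+\lVert\mathbf{e}_{S^{\mathrm{C}}}\rVert_{\ell_1}\bigr)$, after which the NSP applied to $T$ and to $S$ finishes the argument with no blocks at all and with a constant manifestly independent of $q$ and $n$.
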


\begin{proof}
    For $q\in\{1,2\}$, equation \eqref{eq:rLasso_NSP_lq} is a direct consequence of \Cref{thm:LassoNSP} if we set $\lambda = 3\tau n^{\sfrac{1}{2}}$ satisfying \eqref{eq:lambdaLowerBound} independent of $\varrho$ and insert it into \eqref{eq:LassoNSP}.
    For $q\in(1,2)$, we use interpolation.
\end{proof}

Similar recovery guarantees also hold for \omp.

\begin{prop}[{\cite[Thm.\ 6.25]{FoRa13}}]\label{prop:OMP_for_RIP}
    Let $\mathbf{A}\in\mathbb{C}^{m\times \lvert J\rvert}$ have the RIP with constant $\delta_{26n}<\sfrac{1}{6}$.
    Then there exists a constant $\beta>0$ depending only on $\delta_{26n}$ such that for any $\mathbf{y}\in\mathbb{C}^m$ the vector $\mathbf{p}^{24n}(\mathbf{y})$ defined through the \omp \ iteration \eqref{eq:OMP_iteration1}, \eqref{eq:OMP_iteration2} satisfies
    \begin{gather}
        \bigl\lVert\mathbf{v}-\mathbf{p}^{24n}(\mathbf{y})\bigr\rVert_{\ell_q}\leq \beta\bigl(n^{\sfrac{1}{q}-1}\sigma_n(\mathbf{v})_{\ell_1}+n^{\sfrac{1}{q}-\sfrac{1}{2}}\lVert\mathbf{y}-\mathbf{A}\mathbf{v}\rVert_{\ell_2}\bigr)
    \end{gather}
    for all $\mathbf{v}\in\mathbb{C}^{\lvert J\rvert}$ and $q\in[1,2]$.
\end{prop}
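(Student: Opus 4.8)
The plan is to treat this as the \omp\ robust recovery guarantee \cite[Thm.\ 6.25]{FoRa13}, which is formulated only for the Euclidean norm, and to extend it to the full range $q\in[1,2]$. The strategy is to first read off the two endpoint cases $q\in\{1,2\}$ and then fill in the intermediate exponents by interpolation, exactly mirroring the proof of \Cref{prop:rLasso_for_NSP}.

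For $q=2$ I would apply \cite[Thm.\ 6.25]{FoRa13} with sparsity parameter $2n$ in place of $s$: the hypothesis $\delta_{26n}<\sfrac{1}{6}$ matches the required bound on $\delta_{13\cdot 2n}$, so the $24n$-th iterate $\mathbf{p}^{24n}(\mathbf{y})$ obeys
\begin{equation*}
	\bigl\lVert\mathbf{v}-\mathbf{p}^{24n}(\mathbf{y})\bigr\rVert_{\ell_2}\leq \beta_2\bigl((2n)^{-\sfrac{1}{2}}\sigma_{2n}(\mathbf{v})_{\ell_1}+\lVert\mathbf{y}-\mathbf{A}\mathbf{v}\rVert_{\ell_2}\bigr),
\end{equation*}
and the monotonicity $\sigma_{2n}(\mathbf{v})_{\ell_1}\leq\sigma_n(\mathbf{v})_{\ell_1}$ together with $(2n)^{-\sfrac{1}{2}}\leq n^{-\sfrac{1}{2}}$ yields the claimed $q=2$ form. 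The substantial additional step is the endpoint $q=1$, for which I would exploit that $\mathbf{w}\coloneqq\mathbf{v}-\mathbf{p}^{24n}(\mathbf{y})$ is \emph{almost} sparse. Writing $S\coloneqq\supp(\mathbf{p}^{24n}(\mathbf{y}))$ with $\lvert S\rvert\leq 24n$ and letting $T$ index the $n$ largest entries of $\mathbf{v}$, put $U\coloneqq S\cup T$ so that $\lvert U\rvert\leq 25n$. Off $U$ the iterate vanishes, hence $\mathbf{w}_{U^{\mathrm{C}}}=\mathbf{v}_{U^{\mathrm{C}}}$ and $\lVert\mathbf{w}_{U^{\mathrm{C}}}\rVert_{\ell_1}\leq\lVert\mathbf{v}_{T^{\mathrm{C}}}\rVert_{\ell_1}=\sigma_n(\mathbf{v})_{\ell_1}$, while on $U$ the Cauchy--Schwarz inequality gives $\lVert\mathbf{w}_U\rVert_{\ell_1}\leq\sqrt{\lvert U\rvert}\,\lVert\mathbf{w}\rVert_{\ell_2}\leq 5\sqrt{n}\,\lVert\mathbf{w}\rVert_{\ell_2}$. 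Inserting the $q=2$ bound turns the factor $\sqrt{n}$ into the correct weights and produces $\lVert\mathbf{w}\rVert_{\ell_1}\lesssim\sigma_n(\mathbf{v})_{\ell_1}+n^{\sfrac{1}{2}}\lVert\mathbf{y}-\mathbf{A}\mathbf{v}\rVert_{\ell_2}$.

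For $q\in(1,2)$ I would then interpolate using log-convexity of the $\ell_p$-norms, $\lVert\mathbf{w}\rVert_{\ell_q}\leq\lVert\mathbf{w}\rVert_{\ell_1}^{2/q-1}\lVert\mathbf{w}\rVert_{\ell_2}^{2-2/q}$. Substituting the two endpoint estimates and factoring $a+n^{\sfrac{1}{2}}b=n^{\sfrac{1}{2}}(n^{-\sfrac{1}{2}}a+b)$ with $a\coloneqq\sigma_n(\mathbf{v})_{\ell_1}$ and $b\coloneqq\lVert\mathbf{y}-\mathbf{A}\mathbf{v}\rVert_{\ell_2}$, the two factors collapse to a single copy of $n^{\sfrac{1}{q}-\sfrac{1}{2}}(n^{-\sfrac{1}{2}}a+b)$, which is precisely $n^{\sfrac{1}{q}-1}a+n^{\sfrac{1}{q}-\sfrac{1}{2}}b$. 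The only genuinely delicate point is the $q=1$ endpoint: since \cite[Thm.\ 6.25]{FoRa13} supplies only the $\ell_2$ guarantee, one must convert it into an $\ell_1$ bound, and this conversion hinges on the \emph{a priori} sparsity $\lvert S\rvert\lesssim n$ of the \omp\ output, which caps $\lvert U\rvert$ and keeps the Cauchy--Schwarz step lossless up to absolute constants.
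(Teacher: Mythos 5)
Your proof is correct, but it does considerably more work than the paper, which offers no proof at all for this proposition: it is quoted directly from \cite[Thm.\ 6.25]{FoRa13} with the substitution $s = 2n$ (so that $13s = 26n$ and $12s = 24n$), and that theorem is already stated for the whole range $1\leq p\leq 2$, not only for the Euclidean norm. After the substitution, one only needs $\sigma_{2n}(\mathbf{v})_{\ell_1}\leq\sigma_n(\mathbf{v})_{\ell_1}$, $(2n)^{\sfrac{1}{q}-1}\leq n^{\sfrac{1}{q}-1}$ and $(2n)^{\sfrac{1}{q}-\sfrac{1}{2}}\leq \sqrt{2}\, n^{\sfrac{1}{q}-\sfrac{1}{2}}$, exactly as in your $q=2$ step. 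Your premise that the cited result is ``formulated only for the Euclidean norm'' is therefore inaccurate, but it does no harm, since you invoke only the $\ell_2$ case, which is certainly contained in it. The extension you then build is sound: the decomposition over $U=S\cup T$ with $\lvert U\rvert\leq 25n$, the identity $\mathbf{w}_{U^{\mathrm{C}}}=\mathbf{v}_{U^{\mathrm{C}}}$ together with $\lVert\mathbf{v}_{U^{\mathrm{C}}}\rVert_{\ell_1}\leq\sigma_n(\mathbf{v})_{\ell_1}$, Cauchy--Schwarz on $U$, and the interpolation $\lVert\mathbf{w}\rVert_{\ell_q}\leq\lVert\mathbf{w}\rVert_{\ell_1}^{\sfrac{2}{q}-1}\lVert\mathbf{w}\rVert_{\ell_2}^{2-\sfrac{2}{q}}$ all check out, and the exponent bookkeeping $n^{(\sfrac{2}{q}-1)/2}=n^{\sfrac{1}{q}-\sfrac{1}{2}}$ indeed collapses the two endpoint bounds to the claimed form. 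In effect you have re-derived, in a self-contained way, the mechanism by which Foucart and Rauhut themselves pass from a robust $\ell_2$ guarantee plus output sparsity to $\ell_p$ instance optimality (their Thm.\ 6.21): your route buys independence from the $p$-range of the cited statement, at the price of slightly larger constants and redundancy, whereas the paper's route is a pure citation with parameter matching.
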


\subsection{Recovery of functions}
Now, we establish the main results of this section, which involves the generalized Wiener spaces $\mathcal{A}_{\theta}$ with  $\theta\in[1,\infty)$ given by
\begin{equation}\label{eq:DefWiener}
    \mathcal{A}_\theta \coloneqq \mathcal{A}_\theta(\mathcal{B}) \coloneqq\bigl\{f\in \spann_\C({\mathcal B})\colon \; \lVert f\rVert_{\mathcal{A}_\theta}\coloneqq \big\lVert [f] \big\rVert_{\ell_\theta}<\infty\bigr\}.
\end{equation}
To unify the notation, we henceforth use the generic operator $H_\mathbf{X}$, which can be either the \rlasso \ decoder $R_{\mathbf{X}}^{\lambda}$ with $\lambda$ as in \Cref{prop:rLasso_for_NSP}, or the \omp \ decoder $P_{\mathbf{X}}^{24n}$ from \eqref{eq:OMP_Decoder}.
The corresponding vector $\mathbf{r}(\mathbf{y})$ or $\mathbf{p}^{24n}(\mathbf{y})$ is denoted by $\mathbf{h}(\mathbf{y})$.
Below, we combine \Cref{thm:RIP_implies_NSP}, \Cref{prop:rLasso_for_NSP} and \Cref{prop:OMP_for_RIP} with the general RIP result \Cref{thm:RIP} for the matrix $\mathbf{A}$ in \eqref{eq:matrix} induced by a bounded orthonormal system (BOS).

\begin{prop}\label{prop:Recovery_vector_errors}
Let $\mathcal{B} \subset L_2(\mu)$ be a BOS and \smash{$B \coloneqq \sup_{j \in I}\lVert b_j\rVert_{L_\infty}$}.
For $m\in\mathbb{N}$ and $J\subset I$ finite, let the (random) matrix $\mathbf{A}\in\mathbb{C}^{m\times\lvert J\rvert}$ be defined through \eqref{eq:matrix} with \smash{$\mathbf{X}\overset{iid}{\sim} \mu$}.
There exist universal constants $\alpha, \beta>0$ such that for all $n \in \mathbb{N}$, $\mathbf{v} \in \mathbb{C}^{\lvert J\rvert}$, $\gamma \in (0,1)$ and ${\mathbf{y}} \in \mathbb{C}^m$ with 
\begin{equation}\label{eq:NumberSamples}
	m \geq \alpha B^2 n \bigl({\log^2} \bigl(B^2n\bigr) \log\lvert J\rvert + \log\bigl(\gamma^{-1}\log\bigl(B^2n\bigr)\bigr)\bigr)
\end{equation}
the \rlasso \ (with $\lambda(n)$ according to \Cref{prop:rLasso_for_NSP}) or \omp \ (after sufficiently many iterations) reconstruction vector $\mathbf{h}(\mathbf{y})\in\mathbb{C}^{\lvert J\rvert}$ satisfies
\begin{equation}\label{eq:Recovery_lq_error}
    \lVert \mathbf{v} - \mathbf{h}(\mathbf{y}) \rVert_{\ell_q} \leq \beta\bigl(n^{\sfrac{1}{q}-1}\sigma_n(\mathbf{v})_{\ell_1} + n^{\sfrac{1}{q}-\sfrac{1}{2}}  \lVert\mathbf{A}\mathbf{v}-\mathbf{y}\rVert_{\ell_2}\bigr)
\end{equation}
for all $q\in[1,2]$ with probability at least $1 - \gamma$.
\end{prop}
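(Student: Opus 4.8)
The plan is to assemble the three recovery results quoted above on top of a single high-probability RIP event for the random BOS matrix $\mathbf{A}$. The governing observation is that \omp\ is the more demanding of the two decoders: \Cref{prop:OMP_for_RIP} needs $\delta_{26n}<\sfrac{1}{6}$, whereas the \rlasso\ route only needs $\delta_{2n}<\sfrac{1}{3}$ via the chain \Cref{thm:RIP_implies_NSP} $\to$ \Cref{prop:rLasso_for_NSP}. Since $\delta_{2n}\le\delta_{26n}$ by monotonicity of the RIP constants in the order, a single event $\{\delta_{26n}<\sfrac{1}{6}\}$ simultaneously serves both decoders, and the whole statement reduces to producing that event with probability $\ge 1-\gamma$ and then arguing deterministically on it.

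First I would invoke the general RIP result for BOS matrices, \Cref{thm:RIP}, with sparsity order $26n$ and target RIP constant $\delta<\sfrac{1}{6}$. That theorem supplies universal constants so that, once $m$ exceeds a bound of the shape $c\,\delta^{-2}B^2(26n)\log^2(26n)\log\lvert J\rvert$ augmented by the usual additive probability term, the event $\{\delta_{26n}<\sfrac{1}{6}\}$ holds with probability at least $1-\gamma$. Absorbing the factor $26$, the fixed value $\delta=\sfrac{1}{6}$, and the discrepancy between $\log(26n)$ and $\log(B^2n)$ into a single universal constant $\alpha$ turns this into exactly the hypothesis \eqref{eq:NumberSamples}, while the term $\log(\gamma^{-1}\log(B^2n))$ is precisely the standard failure-probability contribution. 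This step is bookkeeping rather than invention, but it is where the precise form of \eqref{eq:NumberSamples} is pinned down; matching the logarithmic factors and fixing $\alpha$ against the appendix statement is the part demanding the most care, and I regard it as the main obstacle.

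Then, working deterministically on $\{\delta_{26n}<\sfrac{1}{6}\}$, I would split into the two decoders. For \omp\ I apply \Cref{prop:OMP_for_RIP} verbatim with $\mathbf{h}(\mathbf{y})=\mathbf{p}^{24n}(\mathbf{y})$, which yields \eqref{eq:Recovery_lq_error} for all $q\in[1,2]$ with a constant depending only on $\delta_{26n}$. For \rlasso\ I first note $\delta_{2n}\le\delta_{26n}<\sfrac{1}{6}<\sfrac{1}{3}$ and invoke \Cref{thm:RIP_implies_NSP} to obtain $\mathbf{A}\in\operatorname{NSP}(n,2,\lVert\cdot\rVert_{\ell_2},\varrho,\tau)$, then feed this into \Cref{prop:rLasso_for_NSP} to get the same bound with $\mathbf{h}(\mathbf{y})=\mathbf{r}(\mathbf{y})$.

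A small but genuine subtlety is that \Cref{prop:rLasso_for_NSP} prescribes $\lambda=3\tau n^{\sfrac{1}{2}}$ with $\tau$ the data-dependent NSP constant, whereas the decoder's regularization parameter must be fixed a priori. I would resolve this using monotonicity of the NSP in its $\tau$-parameter: on $\{\delta_{2n}<\sfrac{1}{6}\}$ the constants $\varrho(\delta_{2n}),\tau(\delta_{2n})$ from \Cref{thm:RIP_implies_NSP} are bounded by universal $\varrho_0<1$ and $\tau_0$, and enlarging $\tau$ keeps \eqref{eq:NSP} valid, so $\mathbf{A}\in\operatorname{NSP}(n,2,\lVert\cdot\rVert_{\ell_2},\varrho_0,\tau_0)$; choosing the data-independent $\lambda=3\tau_0 n^{\sfrac{1}{2}}$ then matches \Cref{prop:rLasso_for_NSP}. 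In both decoders the resulting constant depends only on $\delta_{26n}<\sfrac{1}{6}$ (through $\varrho_0,\tau_0$) and is hence universal, so taking the maximum over the two cases yields a single $\beta$. Because \eqref{eq:Recovery_lq_error} then holds for every $\mathbf{v}$ and $\mathbf{y}$ on an event determined by the random points $\mathbf{X}$ alone (independent of $\mathbf{v},\mathbf{y}$) with probability at least $1-\gamma$, the claim follows.
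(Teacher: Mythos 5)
Your proposal is correct and follows essentially the same route as the paper: a high-probability RIP event from \Cref{thm:RIP}, followed by \Cref{thm:RIP_implies_NSP} and \Cref{prop:rLasso_for_NSP} for \rlasso\ and by \Cref{prop:OMP_for_RIP} for \omp. Your only deviations are refinements rather than a different argument: you use a single event $\{\delta_{26n}<\sfrac{1}{6}\}$ where the paper invokes \Cref{thm:RIP} separately for each decoder (order $2n$ with threshold $\sfrac{1}{3}$, and order $26n$ with threshold $\sfrac{1}{6}$), and your monotonicity argument producing universal NSP constants $(\varrho_0,\tau_0)$ and an a priori admissible $\lambda$ makes explicit a point the paper only asserts when it reads off universal $\varrho,\tau$ from \eqref{eq:NSP_proof}.
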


\begin{proof}
First, we discuss the \rlasso \ case.
By \Cref{thm:RIP} (which is taken from \cite[Thm.\ 2.3]{BDJR21}), there exist $\alpha>0$ such that, for $\gamma \in (0,1)$, $m$, $N$ and $n$ such that \eqref{eq:NumberSamples} holds, the matrix $\mathbf{A}$ has the RIP of order $2n$ with $\delta_{2n}<\sfrac{1}{3}$ with probability at least $1 - \gamma$.
For the instances where the RIP holds, \Cref{thm:RIP_implies_NSP} implies that $\mathbf{A}\in\operatorname{NSP}(n,2,\lVert\cdot\rVert_{\ell_2},\varrho,\tau)$, where $\varrho,\tau$ formally depend on $\delta_{2n}$.
However, since $\delta_{2n}<1/3$ we obtain universal constants $\varrho,\tau>0$ from \eqref{eq:NSP_proof} in its proof.
Then, \Cref{prop:rLasso_for_NSP} implies that there exists $\beta>0$ depending on $\varrho, \tau$ such that \eqref{eq:Recovery_lq_error} holds. For \omp, we again use \Cref{thm:RIP} to see that there is an $\alpha>0$ such that $\mathbf{A}$ has the RIP with $\delta_{26n}<\sfrac{1}{6}$ with probability at least $1 - \gamma$.
Now, the recovery bounds \eqref{eq:Recovery_lq_error} holds by \Cref{prop:OMP_for_RIP}.
\end{proof}

Next, we translate this result on the level of coefficients into a result for functions $f\in \mathcal A_1$.

\begin{theorem}\label{thm:Error_Recovery_Ops}
There exists a universal constants $\alpha, C_B>0$ such that for $n \in \mathbb{N}$ and $J\subset I$ finite with $n<\lvert J\rvert$ the following holds.
If we at least as many sample points \smash{$\mathbf{X}=\{x^1,\ldots,x^m\} \overset{iid}{\sim} \mu$} as required in \eqref{eq:NumberSamples}, then with probability at least $1-\gamma$ we have
\begin{gather}\label{eq:ErrorEstimateRIP}
	\lVert f - H_{\mathbf{X}}(f)\rVert_{L_q} \leq C_B n^{\sfrac{1}{2}-\sfrac{1}{q}}\big(n^{-\sfrac{1}{2}}\sigma_n(f;\mathcal{B})_{\mathcal{A}_1} + \lVert f-P_Jf\rVert_{L_\infty}\big)
\end{gather}
for any $2\leq q\leq \infty$ and $f \in \mathcal{A}_1$.
\end{theorem}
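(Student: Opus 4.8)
The plan is to lift the coefficient-level guarantee of \Cref{prop:Recovery_vector_errors} to the function level by committing to a single, canonical choice of the comparison vector $\mathbf{v}$. Concretely, I would take $\mathbf{v} \coloneqq ([f]_j)_{j\in J} \in \mathbb{C}^{\lvert J\rvert}$, the coefficients of the $L_2(\mu)$-orthogonal projection $P_J f$, and write $\sigma \coloneqq \sigma_n(f;\mathcal{B})_{\mathcal{A}_1}$ and $E \coloneqq \lVert f-P_Jf\rVert_{L_\infty}$. With this choice the residual in \eqref{eq:Recovery_lq_error} becomes a sampling of the truncation error: since $\mathbf{A}\mathbf{v} = m^{-\sfrac{1}{2}}((P_Jf)(x^l))_{l=1}^m$ and $\mathbf{y} = m^{-\sfrac{1}{2}}(f(x^l))_{l=1}^m$, we obtain
\[
	\lVert\mathbf{A}\mathbf{v}-\mathbf{y}\rVert_{\ell_2}^2 = \frac{1}{m}\sum_{l=1}^m \lvert (P_Jf - f)(x^l)\rvert^2 \leq E^2,
\]
so the noise term is controlled \emph{deterministically} by $E$, irrespective of the sample points. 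Likewise, restricting the full coefficient sequence to $J$ can only shrink the best $n$-term tail, whence $\sigma_n(\mathbf{v})_{\ell_1} \leq \sigma_n([f])_{\ell_1} = \sigma$.

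The key structural observation is a \emph{range flip}: \Cref{prop:Recovery_vector_errors} supplies vector bounds only for $q\in[1,2]$, whereas the target \eqref{eq:ErrorEstimateRIP} is a function bound for $q\in[2,\infty]$. These two ranges meet at the endpoints through the bounded orthonormal system. Writing $g \coloneqq P_J f - H_{\mathbf{X}}(f) = \sum_{j\in J}(v_j - h_j(\mathbf{y}))b_j$, orthonormality yields the isometry $\lVert g\rVert_{L_2} = \lVert \mathbf{v}-\mathbf{h}(\mathbf{y})\rVert_{\ell_2}$, which I would combine with \eqref{eq:Recovery_lq_error} at $q=2$ to get $\lVert g\rVert_{L_2} \leq \beta(n^{-\sfrac{1}{2}}\sigma + E)$. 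At the other end, $\lVert g\rVert_{L_\infty} \leq B\lVert\mathbf{v}-\mathbf{h}(\mathbf{y})\rVert_{\ell_1}$ combined with \eqref{eq:Recovery_lq_error} at $q=1$ yields $\lVert g\rVert_{L_\infty} \leq \beta B(\sigma + n^{\sfrac{1}{2}}E)$. Both right-hand sides are, up to the factor $n^{\sfrac{1}{2}}$ and the prefactor $B$, the same quantity $A \coloneqq n^{-\sfrac{1}{2}}\sigma + E$.

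To reach general $q$ I would interpolate the two endpoint estimates via log-convexity of $L_p$-norms on a probability space: for $2\leq q\leq\infty$ and $\theta = \sfrac{2}{q}$,
\[
	\lVert g\rVert_{L_q} \leq \lVert g\rVert_{L_2}^{\theta}\,\lVert g\rVert_{L_\infty}^{1-\theta} \leq (\beta A)^{\sfrac{2}{q}}\bigl(\beta B\, n^{\sfrac{1}{2}} A\bigr)^{1-\sfrac{2}{q}} = \beta\, B^{1-\sfrac{2}{q}}\, n^{\sfrac{1}{2}-\sfrac{1}{q}}\,A.
\]
Using $B\geq 1$ (since $\lVert b_j\rVert_{L_\infty}\geq\lVert b_j\rVert_{L_2}=1$) to bound $B^{1-\sfrac{2}{q}}\leq B$, and adding the truncation error through $\lVert f - H_{\mathbf{X}}(f)\rVert_{L_q} \leq \lVert f-P_Jf\rVert_{L_q} + \lVert g\rVert_{L_q}$ together with $\lVert f-P_Jf\rVert_{L_q}\leq E \leq n^{\sfrac{1}{2}-\sfrac{1}{q}}A$, I would collect everything into $C_B \coloneqq 1+\beta B$ and arrive at \eqref{eq:ErrorEstimateRIP}. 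Every estimate after the invocation of \Cref{prop:Recovery_vector_errors} is deterministic, so the bound holds for \emph{every} $f\in\mathcal{A}_1$ simultaneously on the single event that $\mathbf{A}$ has the required RIP, which under \eqref{eq:NumberSamples} has probability at least $1-\gamma$. The only genuinely delicate point is precisely this endpoint matching: one must apply the vector estimate at the two extremes $q\in\{1,2\}$ and recognize that the loss factor $n^{\sfrac{1}{2}-\sfrac{1}{q}}$ in the conclusion is exactly what interpolation produces from the asymmetric $\ell_1$-versus-$\ell_2$ scalings of the residual — everything else is bookkeeping of constants.
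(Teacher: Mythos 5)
Your proposal is correct and follows essentially the same route as the paper's proof: the same choice $\mathbf{v}=[P_Jf]$, the same deterministic bound of the residual $\lVert\mathbf{A}\mathbf{v}-\mathbf{y}\rVert_{\ell_2}$ by $\lVert f-P_Jf\rVert_{L_\infty}$, the endpoint cases $q=2$ (Parseval) and $q=\infty$ (via $\ell_1$ and $B$), and H\"older interpolation in between. The only cosmetic difference is that you interpolate $P_Jf-H_{\mathbf{X}}(f)$ and add the truncation error afterwards, while the paper interpolates the full error $f-H_{\mathbf{X}}(f)$ directly; both are equivalent bookkeeping.
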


\begin{proof}
First, we consider $q=\infty$.
Applying \eqref{eq:Recovery_lq_error} from \Cref{prop:Recovery_vector_errors} with $\mathbf{v} = P_Jf$ and $\mathbf{y} = m^{-\sfrac{1}{2}}f(\mathbf{X})$ yields that
\begin{align}\label{eq:Est_PN_term}
\begin{split}
    \lVert P_Jf - H_{\mathbf{X}}(f) \rVert_{L_\infty} &\leq \sum_{j\in J} \lVert b_j\rVert_{L_\infty} \big\lvert[P_Jf]_j - [H_{\mathbf{X}}(f)]_j \big\rvert \leq B \big\lVert[P_Jf] - [H_{\mathbf{X}} (f)]\big\rVert_{\ell_1}\\
    &\leq B\beta\big(\sigma_n(P_Jf)_{\ell_1} + n^{\sfrac{1}{2}} m^{-\sfrac{1}{2}} \lVert P_Jf(\mathbf{X}) - f(\mathbf{X})\rVert_{\ell_2}\big)\\
    &\leq B\beta\big(\sigma_n(P_Jf)_{\ell_1} + n^{\sfrac{1}{2}} \lVert P_Jf - f\rVert_{L_\infty}\big)
\end{split}
\end{align}
holds with the stated probability. Using \eqref{eq:Est_PN_term} and $ \sigma_n(P_Jf)_{\ell_1} \leq \sigma_n(f)_{\ell_1} = \sigma_n(f)_{\mathcal{A}_1}$, we further obtain
\begin{align}
\begin{split}
    \lVert f - H_{\mathbf{X}}(f)\rVert_{L_\infty} & \leq \lVert f - P_Jf \rVert_{L_\infty} + \lVert P_Jf - H_{\mathbf{X}}(f) \rVert_{L_\infty}\\
    &\leq B \beta \sigma_n(P_Jf)_{\ell_1} + (B\beta n^{\sfrac{1}{2}}+1)\lVert f - P_Jf\rVert_{L_\infty}\\
    &\leq C_{\infty} n^{\sfrac{1}{2}} \bigl(n^{-\sfrac{1}{2}}\sigma_n(f)_{\mathcal{A}_1} + \lVert f-P_Jf\rVert_{L_\infty} \bigr).
\end{split}
\end{align}
This concludes the case $q =\infty$.

For $q = 2$, we note that $\lVert\cdot\rVert_{L_2}\leq \lVert\cdot\rVert_{L_\infty}$.
Thus, the Parseval identity and \eqref{eq:Recovery_lq_error} from \Cref{prop:Recovery_vector_errors} imply
\begin{align}
\begin{split}
    \lVert f - H_{\mathbf{X}}(f)\rVert_{L_2} & \leq \lVert P_Jf -  H_{\mathbf{X}}(f)\rVert_{L_2} + \lVert f -P_Jf \rVert_{L_\infty}\\
    & = \bigl\lVert [P_Jf] - [H_{\mathbf{X}}(f)]\bigr\rVert_{\ell_2} + \lVert f - P_Jf \rVert_{L_\infty}\\
    &\leq \beta n^{-\sfrac{1}{2}}\sigma_n(P_Jf)_{\ell_1} + (\beta+1) \lVert f - P_Jf\rVert_{L_\infty}\\
    & \leq C_2 \big( n^{-\sfrac{1}{2}} \sigma_n(f)_{\mathcal{A}_1} + \lVert f-P_Jf\rVert_{L_\infty} \big).
\end{split}
\end{align}

For $2<q<\infty$, H\"older's inequality implies
\begin{gather}
\lVert f - H_{\mathbf{X}}(f)\rVert_{L_q} \leq \lVert f - H_{\mathbf{X}}(f)\rVert_{L_2}^{1-\theta}\lVert f - H_{\mathbf{X}}(f)\rVert_{L_\infty}^\theta,
\end{gather}
where $\theta$ has to be chosen such that $\sfrac{1}{q}= \sfrac{(1-\theta)}{2}+ \sfrac{\theta}{\infty}$, which yields $\theta = 1 - \sfrac{2}{q}$.
Now, the desired estimate holds with probability at least $1-\gamma$.
\end{proof}

\begin{remark}[Function classes]\label{rem:FuncClass}
\hfill
\begin{enumerate}
\item If $f = P_J f$, the second term in \eqref{eq:ErrorEstimateRIP} vanishes.
Thus, the decoder satisfies $\lVert f - H_{\mathbf{X}}(f)\rVert_{L_q} \leq C_{B} n^{-\sfrac{1}{q}}\sigma_n(f;\mathcal{B})_{\mathcal{A}_1}$ with high probability if $m \gtrsim B^2n \log^2(B^2 n) \log\lvert J\rvert$, which is a simplification of the sampling complexity bound \eqref{eq:NumberSamples} by estimating $\log\bigl(\gamma^{-1}\log\bigl(B^2n\bigr)\bigr)$ in terms of $\log(B^2  n)$ at the cost of a constant.

\item For the general case, the truncation error does not vanish.
However, when recovering all functions from the unit ball of a given and fixed function class $\mathbf{F}$, we directly obtain from \Cref{thm:Error_Recovery_Ops}
\begin{equation}\label{eq:FS}
    \sup\limits_{\|f\|_{\mathbf{F}}\leq 1} \|f-H_{\mathbf{X}}f\|_{L_q} \leq 
  C_B n^{\sfrac{1}{2}-\sfrac{1}{q}}\big(n^{-\sfrac{1}{2}}\sigma_n(\mathbf{F};\mathcal{B})_{\mathcal{A}_1} + \|I-P_J\|_{\mathbf{F}\to L_\infty}\big).
\end{equation}
A similar estimate with $q=2$ has been established in \cite{Kri23} for Basis Pursuit Denoising.
According to \eqref{eq:FS}, it suffices to have a mild control of the worst-case truncation error, say
\begin{equation}
    \|I-P_J\|_{\mathbf{F}\to L_\infty} = \sup_{\|f\|_{\mathbf{F}}\leq 1} \lVert f-P_Jf\rVert_{L_\infty} = \mathcal O(|J|^{-\kappa})
\end{equation}
for some $\kappa > 0$.
Since \(m\) depends only logarithmically on $|J|$, we may control the second summand in \eqref{eq:ErrorEstimateRIP} by the order of the first one at the prize of using  now $m\gtrsim   n\log(n)^3$ many samples. This requires an at most polynomial decay of $\sigma_n(F;\mathcal{B})_{\mathcal{A}_1}$. Inserting the bounds from \cite{MSU25} shows that our decoders outperform any linear decoder such as (weighted) least squares for weighted Wiener and certain Sobolev-Besov spaces with mixed smoothness, see for instance \cite[Thm.\ 6.1 and Cor.\ 6.8]{MSU25}.
\end{enumerate}
\end{remark}
\pagebreak

\subsection{General orthonormal bases}\label{Legendre}

We consider dictionaries $\mathcal B$ with atoms $b_j$ that are not uniformly bounded in $L_\infty$. 
\begin{definition}
Let $\mathcal{B}\subset L_2(\mu)$ be orthonormal and assume that $\varphi^*(x)\coloneqq\sup_{j\in I}\lvert b_j(x) \rvert \in L_2(\mu)$.
Then, we can define
\begin{equation}
\varphi \coloneqq \frac{\varphi^*}{\lVert\varphi^*\rVert_{L_2(\mu)}}
\end{equation}
and a probability measure $\tilde \mu$ on $\Omega$ with $\mathrm d \tilde \mu = \varphi^2 \mathrm d \mu$.
By setting $\tilde{b}_j \coloneqq \frac{b_j}{\varphi}$, we get an orthonormal basis $\tilde{\mathcal{B}}=\{\tilde{b}_j\colon \; j\in I\}$ for $L_2(\varphi^2\mathrm{d}\mu)$.
Moreover, $\tilde{\mathcal{B}}$ is uniformly bounded, such that we can define
\begin{gather}
\tilde{B}\coloneqq \sup\limits_{j\in I}\lVert \tilde{b}_j\rVert_{L_\infty(\mu)} = \lVert \varphi^*\rVert_{L_2(\mu)}.
\end{gather}
\end{definition}
In principle, we can apply \Cref{thm:Error_Recovery_Ops} for the new basis $\tilde{\mathcal{B}}$.
However, this would lead to estimates with the modified measure $\tilde \mu$.
If we want to formulate a result similar to \Cref{thm:Error_Recovery_Ops} based on the original measure $\mu$, we have to change the two decoders \rlasso \ and \omp.
For this, we again fix a finite index set $J$, and define the matrix
\begin{gather}\label{eq:mod_matrix}
	\tilde{\mathbf{A}} \coloneqq m^{-\sfrac{1}{2}}(\tilde{b}_j(x^l))_{1 \leq l \leq m, j \in J} \in \mathbb{C}^{m\times \lvert J\rvert}
\end{gather}
and the vector
\begin{gather}
\tilde{\mathbf{y}} \coloneqq m^{-\sfrac{1}{2}} \frac{f}{\varphi}(\mathbf{X}) = m^{-\sfrac{1}{2}}\biggl(\frac{f(x^l)}{\varphi(x^l)}\biggr)_{l=1}^m \in \mathbb{C}^m.
\end{gather}

\begin{definition}[Modified \rlasso]\label{def:mod_square_root_lasso}
For $\lambda>0$, we define $\tilde{R}_{\mathbf{X}}^\lambda\colon C(\Omega) \to V_J$ by
\begin{equation}\label{eq:mod_Lasso_Decoder}
	\tilde{R}_{\mathbf{X}}^\lambda(f) = \sum_{j \in J} \tilde{r}_j(\tilde{\mathbf{y}}) b_j,
\end{equation}
where $\tilde{\mathbf{r}}(\tilde{\mathbf{y}}) = [\tilde{R}_{\mathbf{X}}^\lambda(f)] \in \mathbb{C}^{\lvert J\rvert}$ is a (fixed) solution of the \rlasso \ minimization problem
\begin{equation}\label{eq:mod_MinimizationProblem}
	\min_{\mathbf{z} \in \mathbb{C}^{\lvert J\rvert}} \lVert\mathbf{z}\rVert_{\ell_1} + \lambda \bigl\lVert\tilde{\mathbf{A}} \mathbf{z} - \tilde{\mathbf{y}}\bigr\rVert_{\ell_2}
\end{equation}
with the matrix $\tilde{\mathbf{A}}$ from \eqref{eq:mod_matrix}.
Note that we implicitly assume that there is a deterministic selection rule in the case of multiple solutions.
\end{definition}

\begin{definition}[Modified \omp]\label{def:mod_OMP} 	
For $n\in \mathbb{N}$, we define $\tilde{P}_{\mathbf{X}}^n\colon C(\Omega) \to V_J$ by
\begin{gather}\label{eq:mod_OMP_Decoder}
    \tilde{P}_{\mathbf{X}}^{n}(f) = \sum_{j \in J} \tilde{p}_j^n(\tilde{\mathbf{y}}) b_j,
\end{gather}
where $\tilde{\mathbf{p}}^n(\tilde{\mathbf{y}}) =[\tilde{P}_{\mathbf{X}}^n(f)]\in\mathbb{C}^{\lvert J\rvert}$ is recursively defined through
\begin{align}
    S^{k+1}(\tilde{\mathbf{y}}) & = S^k \cup \operatorname{argmax} \bigl\{\bigl\lvert\bigl(\tilde{\mathbf{A}}^\ast \bigl(\tilde{\mathbf{y}} - \tilde{\mathbf{A}}\tilde{\mathbf{p}}^k\bigr)\bigr)_j\bigr\rvert\colon \;  1 \leq j \leq \lvert J\rvert\bigr\},\label{eq:mod_OMP_iteration1}\\
    \tilde{\mathbf{p}}^{k+1}(\tilde{\mathbf{y}}) & = \operatorname{argmin}\big\{\lVert \tilde{\mathbf{y}} - \tilde{\mathbf{A}}\mathbf{p}\rVert_{\ell_2}\colon \; \supp(\mathbf{p}) \subset S^{k+1}\big\}\label{eq:mod_OMP_iteration2}
\end{align}
with $S^0(\mathbf{y}) \coloneqq \emptyset$ and $\tilde{\mathbf{p}}^0(\tilde{\mathbf{y}}) \coloneqq \mathbf{0}$. If the $\argmax$ is not a singleton, we select the smallest index $j$.
\end{definition}

\begin{remark}
The decoders use the modified basis $\tilde{\mathcal{B}}$ only to obtain the coefficients $\tilde{\mathbf{r}}(\tilde{\mathbf{y}})$ and $\tilde{\mathbf{p}}^n(\tilde{\mathbf{y}})$. The reconstruction in \eqref{eq:mod_Lasso_Decoder} and \eqref{eq:mod_OMP_Decoder} is still based on the original basis $\mathcal{B}$.
\end{remark}

Analogously to our previous notation, we now use the generic operator $\tilde{H}_{\mathbf{X}}$ to denote either the modified \rlasso \ $\tilde{R}_{\mathbf{X}}^{\lambda}$ with $\lambda$ as in \Cref{prop:Recovery_vector_errors}, or the modified \omp \ output $\tilde{P}_{\mathbf{X}}^{24n}$.

\begin{theorem}\label{thm:Recovery_General_Bases}
There exists a universal constant $\alpha>0$ such that for all $n \in \mathbb{N}$, $\gamma \in (0,1)$ and $J\subset I$ finite with $n<\lvert J\rvert$ the following holds.
If we draw
\begin{equation}
	m \geq \alpha \tilde{B}^2 n \bigl({\log^2} \bigl(\tilde{B}^2n\bigr) \log\lvert J\rvert + \log\bigl(\gamma^{-1}\log\bigl(\tilde{B}^2n\bigr)\bigr)\bigr)
\end{equation}
sample points $\mathbf{X}=\{x^1,\ldots,x^m\} \overset{iid}{\sim} \varphi^2\,d\mu$, then with probability at least $1-\gamma$ we have with the $\beta$ from \Cref{prop:Recovery_vector_errors} that for
\begin{gather}
	\lVert f - \tilde{H}_{\mathbf{X}}(f)\rVert_{L_2(\mu)} \leq (\beta+1)\big(n^{-\sfrac{1}{2}}\sigma_n(f;\mathcal{B})_{\mathcal{A}_1} + \lVert f-P_Jf\rVert_{L_\infty(\mu)}\big)
\end{gather}
for any $2\leq q\leq \infty$ and $f \in \mathcal{A}_1$.
\end{theorem}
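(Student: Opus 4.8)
The plan is to \emph{transfer} the entire problem to the modified bounded orthonormal system $\tilde{\mathcal B}=\{\tilde b_j=b_j/\varphi\}$, for which \Cref{thm:Error_Recovery_Ops} already applies verbatim, and then to undo the rescaling by the multiplier $\varphi$. The structural observation that drives everything is that division by $\varphi$ is an \emph{exact $L_2$-isometry} between the two worlds: since $\mathrm d\tilde\mu=\varphi^2\,\mathrm d\mu$, one has $\|h/\varphi\|_{L_2(\tilde\mu)}=\|h\|_{L_2(\mu)}$ for every $h$. Moreover, expanding $f=\sum_j[f]_jb_j$ and dividing by $\varphi$ shows that $g\coloneqq f/\varphi=\sum_j[f]_j\tilde b_j$, so the coefficients of $g$ with respect to $\tilde{\mathcal B}$ coincide with those of $f$ with respect to $\mathcal B$. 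Consequently $\sigma_n(g;\tilde{\mathcal B})_{\mathcal A_1}=\sigma_n(f;\mathcal B)_{\mathcal A_1}$ and the $L_2(\tilde\mu)$-projection satisfies $\tilde P_Jg=(P_Jf)/\varphi$.

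First I would record that the modified decoders are nothing but the standard decoders applied to $g$ in the system $\tilde{\mathcal B}$: the data $\tilde{\mathbf y}=m^{-\sfrac12}(f/\varphi)(\mathbf X)=m^{-\sfrac12}g(\mathbf X)$ and the matrix $\tilde{\mathbf A}$ from \eqref{eq:mod_matrix} are exactly the quantities one feeds into \rlasso/\omp\ for $g$ and $\tilde{\mathcal B}$. Hence the coefficient vector $\tilde{\mathbf h}(\tilde{\mathbf y})$ produced by \Cref{def:mod_square_root_lasso}/\Cref{def:mod_OMP} coincides with the vector $\mathbf h$ of \Cref{prop:Recovery_vector_errors} applied to $(\tilde{\mathbf A},\tilde{\mathbf y})$. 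The only difference is cosmetic: the reconstructions \eqref{eq:mod_Lasso_Decoder}/\eqref{eq:mod_OMP_Decoder} re-expand these coefficients in the original atoms $b_j=\varphi\,\tilde b_j$, so that $\tilde H_{\mathbf X}(f)=\varphi\cdot\bigl(\sum_{j\in J}\tilde h_j\tilde b_j\bigr)$. Applying the isometry then gives
\[
\|f-\tilde H_{\mathbf X}(f)\|_{L_2(\mu)}=\Bigl\|g-\sum_{j\in J}\tilde h_j\tilde b_j\Bigr\|_{L_2(\tilde\mu)},
\]
which reduces the claim to the $q=2$ case of \Cref{thm:Error_Recovery_Ops} for $g$ in the BOS $\tilde{\mathcal B}$ with constant $\tilde B$ and sampling measure $\tilde\mu$.

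Next I would run the $q=2$ argument of \Cref{thm:Error_Recovery_Ops} in the tilded world. Because $\tilde{\mathcal B}$ is a genuine BOS for $L_2(\tilde\mu)$ with bound $\tilde B$ and $\mathbf X\sim\tilde\mu$, \Cref{prop:Recovery_vector_errors} yields, with probability at least $1-\gamma$ under the stated sample count, the vector estimate $\|\mathbf v-\tilde{\mathbf h}\|_{\ell_2}\le\beta\bigl(n^{-\sfrac12}\sigma_n(\mathbf v)_{\ell_1}+\|\tilde{\mathbf A}\mathbf v-\tilde{\mathbf y}\|_{\ell_2}\bigr)$ for $\mathbf v=([f]_j)_{j\in J}$. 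Splitting
\[
\Bigl\|g-\textstyle\sum_j\tilde h_j\tilde b_j\Bigr\|_{L_2(\tilde\mu)}\le\Bigl\|\tilde P_Jg-\textstyle\sum_j\tilde h_j\tilde b_j\Bigr\|_{L_2(\tilde\mu)}+\|g-\tilde P_Jg\|_{L_2(\tilde\mu)},
\]
identifying the first summand with $\|\mathbf v-\tilde{\mathbf h}\|_{\ell_2}$ by Parseval in $L_2(\tilde\mu)$, and using $\sigma_n(\mathbf v)_{\ell_1}\le\sigma_n(f;\mathcal B)_{\mathcal A_1}$ together with the isometry $\|g-\tilde P_Jg\|_{L_2(\tilde\mu)}=\|f-P_Jf\|_{L_2(\mu)}\le\|f-P_Jf\|_{L_\infty(\mu)}$, one assembles the bound with the advertised constant $\beta+1$.

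The hard part will be the residual (noise) term $\|\tilde{\mathbf A}\mathbf v-\tilde{\mathbf y}\|_{\ell_2}$. A direct computation gives $\|\tilde{\mathbf A}\mathbf v-\tilde{\mathbf y}\|_{\ell_2}^2=\tfrac1m\sum_{l}\lvert(f-P_Jf)(x^l)/\varphi(x^l)\rvert^2$, i.e.\ the \emph{empirical} $L_2(\tilde\mu)$-norm of $(f-P_Jf)/\varphi$. In contrast to the uniformly bounded setting of \Cref{thm:Error_Recovery_Ops}, this quantity cannot be dominated pointwise by $\|f-P_Jf\|_{L_\infty(\mu)}$, since $\varphi$ is not bounded below; its expectation equals $\|f-P_Jf\|_{L_2(\mu)}^2$ exactly, so the delicate point is converting this into an almost-sure bound. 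The clean route is to control it by the deterministic tilted supremum $\|(f-P_Jf)/\varphi\|_{L_\infty(\tilde\mu)}$, which holds $\tilde\mu$-almost surely because $\mathbf X\sim\tilde\mu$; this is precisely the step where the passage from $\mu$ to $\tilde\mu$ enters and must be handled carefully to match the stated right-hand side.
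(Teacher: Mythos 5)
Your reduction to the tilded system is exactly the paper's own argument, just organized through the isometry $h\mapsto h/\varphi$ between $L_2(\mu)$ and $L_2(\tilde\mu)$: the paper likewise splits off $\lVert f-P_Jf\rVert$, uses Parseval for $P_Jf-\tilde H_{\mathbf X}(f)\in V_J$ (which yields the same coefficient vector whether one expands in $\mathcal B$ over $L_2(\mu)$ or in $\tilde{\mathcal B}$ over $L_2(\tilde\mu)$), and applies \Cref{prop:Recovery_vector_errors} to the pair $(\tilde{\mathbf A},\tilde{\mathbf y})$, the RIP being supplied by the BOS $\tilde{\mathcal B}$ with constant $\tilde B$ under $\mathbf X\sim\tilde\mu$. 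Up to the treatment of the data-mismatch term, your argument is correct and matches the paper step for step.

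However, your proposal stops exactly where the stated right-hand side still has to be produced, so as written it is incomplete --- and the difficulty you flag is genuine, not something that can be waved away. The paper's own chain silently replaces the residual $\lVert\tilde{\mathbf A}[P_Jf]-\tilde{\mathbf y}\rVert_{\ell_2}=\bigl(\tfrac1m\sum_{l}\lvert(f-P_Jf)(x^l)\rvert^2/\varphi(x^l)^2\bigr)^{\sfrac{1}{2}}$ by $\lVert f-P_Jf\rVert_{L_\infty(\mu)}$; as a deterministic, per-realization bound this requires $\varphi\geq 1$ $\tilde\mu$-a.e., and since $\int\varphi^2\,\mathrm d\mu=1$ with $\mu$ a probability measure, $\varphi\geq1$ a.e.\ forces $\varphi\equiv1$, i.e.\ the absorption is valid only in the already uniformly bounded case (for the Legendre system one indeed has $\varphi<1$ near $x=0$). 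Your observation about the expectation does not rescue this either: a Markov-type argument spends probability per function and destroys the \enquote{for all $f\in\mathcal A_1$ simultaneously} structure of the claim. The honest deterministic repair is the route you hint at: bound the residual by $\lVert(f-P_Jf)/\varphi\rVert_{L_\infty}\leq\lVert 1/\varphi\rVert_{L_\infty}\,\lVert f-P_Jf\rVert_{L_\infty(\mu)}$ and note that $1/\varphi=\lVert\varphi^*\rVert_{L_2(\mu)}/\varphi^*\leq\tilde B$ whenever $\varphi^*\geq1$ a.e.\ (automatic as soon as the constant function is an atom, as for Legendre or Chebyshev). This closes your gap, but it yields the constant $\beta\tilde B+1$ rather than the advertised $\beta+1$; keeping a $\tilde B$-free constant would require restating the truncation term as $\lVert(f-P_Jf)/\varphi\rVert_{L_\infty}$. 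In short: same route as the paper, correct identification of the delicate step, but that step is missing from your proposal --- and it is precisely the step the paper's proof also glosses over, so completing it requires either the extra hypothesis and the factor $\tilde B$, or a reformulated right-hand side.
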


\begin{proof}
We fix $f\in \mathcal{A}_1$ and note that $\lVert\cdot\rVert_{L_2(\mu)}\leq \lVert\cdot\rVert_{L_\infty(\mu)}$. We then apply in turn the Parseval identity and \Cref{prop:Recovery_vector_errors} to obtain
\begin{align}
\begin{split}
    \lVert f - \tilde{H}_{\mathbf{X}}(f)\rVert_{L_2(\mu)} & \leq \lVert P_Jf - \tilde{H}_{\mathbf{X}}(f)\rVert_{L_2(\mu)} + \lVert f -P_Jf \rVert_{L_\infty(\mu)}\\
    & = \lVert [P_Jf] - [\tilde{H}_{\mathbf{X}}(f)]\rVert_{\ell_2} + \lVert f - P_Jf \rVert_{L_\infty(\mu)}\\
    &\leq \beta n^{-\sfrac{1}{2}}\sigma_n(P_Jf, \mathcal{B})_{\ell_1} + (\beta+1) \lVert f - P_Jf\rVert_{L_\infty(\mu)}\\
    & \leq (\beta+1) \bigl( n^{-\sfrac{1}{2}} \sigma_n(f, \mathcal{B})_{\mathcal{A}_1} + \lVert f-P_Jf\rVert_{L_\infty(\mu)} \bigr).
\end{split}
\end{align}
This concludes the proof.
\end{proof}

\begin{remark}
    The Legendre polynomials $L_k\colon [-1, 1] \to \mathbb{R}$, $k\in\mathbb{N}_0$, with
    \begin{equation}
        x \mapsto \frac{\sqrt{2k+1}}{2^kk!} \cdot \frac{\mathrm{d}^k}{\mathrm{d}x^k}\bigl(x^2-1\bigr)^k
    \end{equation}
    are an orthonormal basis for the normalized Lebesgue measure $\mathrm{d}\mu=\sfrac{1}{2}\mathrm{d}x$.
    This basis is not uniformly bounded.
    However, due to \cite[Lem.\ 5.1]{Rawa12} or \cite[Thm.\ 7.3.3]{Szego75}, one can show that the atoms $L_k$ are bounded by the $L_2(\mu)$-function
    \begin{equation}
        \tilde{\varphi}_L(x) \coloneqq
        \begin{cases}
            \sqrt{\sfrac{6}{\pi}} \cdot \bigl(1-x^2\bigr)^{-\sfrac{1}{4}} & x\in (-1, 1)\\
            \infty & x\in\{-1,1\}
        \end{cases}.
    \end{equation}
    Since $\lVert\tilde{\varphi}_L\rVert_{L_2}^2=3$, we can apply \Cref{thm:Recovery_General_Bases} to obtain error bounds for \rlasso \ or \omp \ with Legendre polynomials.
\end{remark}

\section{Minimal number of samples -- instance optimality}\label{sec:InstanceOptimality}
In this section, we will derive lower bounds on the number of required samples for any decoder with a certain error decay.
Our results are based on the notion of instance optimality for vectors, which we briefly introduce below.
The proofs are along the lines of \cite[Sec.\ 11]{FoRa13} and included in \hyperref[sec:ProofsInstance]{Appendix \ref*{sec:ProofsInstance}} for convenience.

\subsection{Instance optimality notions for vectors}\label{sec:InstanceOptVec}

The concept of instance optimality was first introduced by Cohen, Dahmen and DeVore in \cite{CDD09}.

\begin{definition}
Let $p,q\in[1,\infty]$. We call a pair $(\mathbf{A},\Delta)$ of a measurement matrix $\mathbf{A}\in\C^{m\times N}$ and a reconstruction map $\Delta \colon \C^m\to\C^N$ $(q,p)$--$n$--instance-optimal with constant $C_N>0$ if it satisfies for all $\mathbf{z}\in\C^N$ that
\begin{gather}\label{eq:InstOpt}
\lVert\mathbf{z}-\Delta(\mathbf{A}\mathbf{z})\rVert_{\ell_q}\leq C_Nn^{\sfrac{1}{q}-\sfrac{1}{p}}\sigma_n(\mathbf{z})_{\ell_p},
\end{gather}
denoted by $(\mathbf{A},\Delta)\in\operatorname{IO}(q,p,n,C_N)$. If $p=q$, then we say that $(\mathbf{A},\Delta)$ is $p$--$n$--instance-optimal with constant $C_N$ and write $(\mathbf{A},\Delta)\in\operatorname{IO}(p,n,C_N)$.
\end{definition}

The following statement is a slight generalization of \cite[Thm.\ 11.4]{FoRa13}.
It can be obtained by tracking the involved constants and generalizing from the $\ell_1$- to the $\ell_p$-case.

\begin{prop}\label{prop:IO_to_best_n_term}
Let $q\geq p\geq 1$ and $\mathbf{A}\in\C^{m\times N}$.
If there exists $\Delta\colon \C^m \to\C^N$ such that $(\mathbf{A},\Delta)\in\operatorname{IO}(q,p,n,C_N)$, then we have
\begin{gather}\label{eqn:IOqNorm}
\lVert\mathbf{z}\rVert_{\ell_q}\leq C_Nn^{\sfrac{1}{q}-\sfrac{1}{p}}\sigma_{2n}(\mathbf{z})_{\ell_p}
\end{gather}
for all $\mathbf{z}\in\ker(\mathbf{A})$. Conversely, if \eqref{eqn:IOqNorm} holds for all $\mathbf{z}\in\ker(\mathbf{A})$, then there exists $\Delta\colon \C^m \to\C^N$ such that $(\mathbf{A},\Delta) \in \operatorname{IO}(q,p,n,2C_N)$.
\end{prop}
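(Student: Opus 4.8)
The plan is to prove the two implications separately, exploiting the elementary fact that $\sigma_n(\mathbf{v}_T)_{\ell_p}=0$ whenever $\lvert T\rvert\leq n$, together with the subadditivity of the best $n$-term error. For the forward implication I would fix $\mathbf{v}\in\ker(\mathbf{A})$ and an arbitrary $T\subset[N]$ with $\lvert T\rvert\leq n$, and use that $\mathbf{A}\mathbf{v}=0$ forces the two vectors $\mathbf{v}_T$ and $-\mathbf{v}_{T^{\mathrm{C}}}$ to share the \emph{same} measurement $\mathbf{y}\coloneqq\mathbf{A}\mathbf{v}_T=-\mathbf{A}\mathbf{v}_{T^{\mathrm{C}}}$; hence $\Delta(\mathbf{y})$ is a single fixed vector. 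Applying the instance-optimality bound \eqref{eq:InstOpt} to the $n$-sparse vector $\mathbf{v}_T$ makes the right-hand side vanish and thus pins down $\Delta(\mathbf{y})=\mathbf{v}_T$; applying it to $-\mathbf{v}_{T^{\mathrm{C}}}$ then yields
\begin{equation*}
\lVert\mathbf{v}\rVert_{\ell_q}=\lVert-\mathbf{v}_{T^{\mathrm{C}}}-\mathbf{v}_T\rVert_{\ell_q}=\lVert-\mathbf{v}_{T^{\mathrm{C}}}-\Delta(\mathbf{y})\rVert_{\ell_q}\leq C_N n^{\sfrac{1}{q}-\sfrac{1}{p}}\sigma_n(\mathbf{v}_{T^{\mathrm{C}}})_{\ell_p}.
\end{equation*}

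Since this holds for \emph{every} admissible $T$, I would then specialize $T$ to the support of the $n$ largest entries of $\mathbf{v}$. With this choice the $n$ largest entries of $\mathbf{v}_{T^{\mathrm{C}}}$ are exactly the $(n+1)$-st through $(2n)$-th largest entries of $\mathbf{v}$, so that $\sigma_n(\mathbf{v}_{T^{\mathrm{C}}})_{\ell_p}=\sigma_{2n}(\mathbf{v})_{\ell_p}$, which is precisely \eqref{eqn:IOqNorm}.

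For the converse I would define a decoder by setting, for $\mathbf{y}$ in the range of $\mathbf{A}$, $\Delta(\mathbf{y})\in\argmin\{\sigma_n(\mathbf{x})_{\ell_p}\colon\mathbf{A}\mathbf{x}=\mathbf{y}\}$ (the minimum is attained: there are only finitely many supports of size $n$, so a minimizing sequence can be confined to a compact set after projecting out the part of $\ker\mathbf{A}$ sitting on the active support), and extend $\Delta$ arbitrarily off the range. Given any $\mathbf{z}\in\C^N$, I would put $\mathbf{x}=\Delta(\mathbf{A}\mathbf{z})$, so that $\mathbf{z}-\mathbf{x}\in\ker(\mathbf{A})$ and, by minimality, $\sigma_n(\mathbf{x})_{\ell_p}\leq\sigma_n(\mathbf{z})_{\ell_p}$. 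Feeding $\mathbf{z}-\mathbf{x}$ into the hypothesis \eqref{eqn:IOqNorm} and invoking subadditivity, $\sigma_{2n}(\mathbf{z}-\mathbf{x})_{\ell_p}\leq\sigma_n(\mathbf{z})_{\ell_p}+\sigma_n(\mathbf{x})_{\ell_p}\leq 2\sigma_n(\mathbf{z})_{\ell_p}$, gives $\lVert\mathbf{z}-\Delta(\mathbf{A}\mathbf{z})\rVert_{\ell_q}\leq 2C_N n^{\sfrac{1}{q}-\sfrac{1}{p}}\sigma_n(\mathbf{z})_{\ell_p}$, i.e.\ $(\mathbf{A},\Delta)\in\operatorname{IO}(q,p,n,2C_N)$.

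The genuinely delicate points, rather than any hard calculation, are the following. In the forward direction the crux is recognizing that the shared-measurement trick determines $\Delta(\mathbf{y})$ exactly \emph{because} $\sigma_n$ of an $n$-sparse vector is zero, and the bookkeeping that converts $\sigma_n(\mathbf{v}_{T^{\mathrm{C}}})_{\ell_p}$ into $\sigma_{2n}(\mathbf{v})_{\ell_p}$ at the optimal $T$. In the converse, the only subtleties are the attainment of the minimum defining $\Delta$ and the insistence on minimizing $\sigma_n(\cdot)_{\ell_p}$ (not a norm): the cruder estimate $\sigma_n(\mathbf{x})_{\ell_p}\leq\lVert\mathbf{x}\rVert_{\ell_p}$ would fail to close the bound, so the minimality property must be used in its sharp form.
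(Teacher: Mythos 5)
Your proposal is correct and follows essentially the same route as the paper's proof: in the forward direction, exact recovery of the $n$-sparse part from the shared measurement $\mathbf{A}\mathbf{v}_T=-\mathbf{A}\mathbf{v}_{T^{\mathrm{C}}}$ followed by instance optimality applied to the complement (the paper takes $T$ to be the top-$n$ support directly rather than arguing for arbitrary $T$ first); in the converse, the same decoder $\Delta(\mathbf{y})\in\argmin\{\sigma_n(\mathbf{v})_{\ell_p}\colon\mathbf{A}\mathbf{v}=\mathbf{y}\}$ combined with the subadditivity bound $\sigma_{2n}(\mathbf{u}+\mathbf{v})_{\ell_p}\leq\sigma_n(\mathbf{u})_{\ell_p}+\sigma_n(\mathbf{v})_{\ell_p}$. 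Your added remark on the attainment of the minimum defining $\Delta$ is a detail the paper leaves implicit, but it does not change the argument.
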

In the definition of instance optimality, we can actually reduce the $q$ as desried. 
\begin{lemma}[{\cite[Thm.\ 11.8]{FoRa13}}] \label{thm:IOPreservation}
Let $q\geq q'\geq p\geq 1$ and $(\mathbf{A},\Delta)\in\operatorname{IO}(q,p,n,C_N)$.
Then there exists $\Delta'\colon \C^m \to\C^N$ such that $(\mathbf{A},\Delta')\in\operatorname{IO}(q',p,n,6C_N+2)$.
\end{lemma}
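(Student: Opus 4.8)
The plan is to factor the statement through the kernel characterization of instance optimality provided by \Cref{prop:IO_to_best_n_term}. First I would apply its forward direction to the hypothesis $(\mathbf{A},\Delta)\in\operatorname{IO}(q,p,n,C_N)$, which yields the $\ell_q$-kernel bound $\lVert\mathbf{z}\rVert_{\ell_q}\le C_N n^{\sfrac{1}{q}-\sfrac{1}{p}}\sigma_{2n}(\mathbf{z})_{\ell_p}$ for all $\mathbf{z}\in\ker(\mathbf{A})$. The goal then becomes to upgrade this to the analogous $\ell_{q'}$-kernel bound $\lVert\mathbf{z}\rVert_{\ell_{q'}}\le C' n^{\sfrac{1}{q'}-\sfrac{1}{p}}\sigma_{2n}(\mathbf{z})_{\ell_p}$ for all $\mathbf{z}\in\ker(\mathbf{A})$, after which the converse direction of \Cref{prop:IO_to_best_n_term} (applied with $q'$ in place of $q$, legitimate since $q'\ge p\ge 1$) immediately produces a map $\Delta'$ with $(\mathbf{A},\Delta')\in\operatorname{IO}(q',p,n,2C')$; with the value $C'=3C_N+1$ obtained below this gives exactly the constant $6C_N+2$.

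The heart of the argument is a deterministic sequence-space estimate, valid for any $\mathbf{z}$ satisfying the $\ell_q$-kernel bound. Let $S$ index the $n$ largest entries of $\mathbf{z}$ in modulus and split $\lVert\mathbf{z}\rVert_{\ell_{q'}}\le\lVert\mathbf{z}_S\rVert_{\ell_{q'}}+\sigma_n(\mathbf{z})_{\ell_{q'}}$. For the head I would use the finite-dimensional embedding $\lVert\mathbf{z}_S\rVert_{\ell_{q'}}\le n^{\sfrac{1}{q'}-\sfrac{1}{q}}\lVert\mathbf{z}_S\rVert_{\ell_q}\le n^{\sfrac{1}{q'}-\sfrac{1}{q}}\lVert\mathbf{z}\rVert_{\ell_q}$ together with the $\ell_q$-kernel bound, which collapses to $C_N n^{\sfrac{1}{q'}-\sfrac{1}{p}}\sigma_{2n}(\mathbf{z})_{\ell_p}$. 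For the tail $\sigma_n(\mathbf{z})_{\ell_{q'}}$ I would split once more at index $2n$: the middle block of coordinates $n<k\le 2n$ consists of $n$ coordinates and is again handled by $\ell_q\hookrightarrow\ell_{q'}$ followed by the $\ell_q$-kernel bound, giving another $C_N n^{\sfrac{1}{q'}-\sfrac{1}{p}}\sigma_{2n}(\mathbf{z})_{\ell_p}$, while the far tail $\sigma_{2n}(\mathbf{z})_{\ell_{q'}}$ is controlled by a Stechkin-type inequality applied to the tail sequence $\mathbf{u}\coloneqq\mathbf{z}-\mathbf{z}_S$, giving $\sigma_{2n}(\mathbf{z})_{\ell_{q'}}=\sigma_n(\mathbf{u})_{\ell_{q'}}\le n^{\sfrac{1}{q'}-\sfrac{1}{p}}\lVert\mathbf{u}\rVert_{\ell_p}=n^{\sfrac{1}{q'}-\sfrac{1}{p}}\sigma_n(\mathbf{z})_{\ell_p}$.

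The step I expect to be the main obstacle is producing the correct $n^{\sfrac{1}{q'}-\sfrac{1}{p}}$ decay on the far tail while only $\sigma_{2n}(\mathbf{z})_{\ell_p}$ (and not the larger $\sigma_n(\mathbf{z})_{\ell_p}$) is admissible on the right-hand side: Stechkin naturally outputs $\sigma_n$, so the surplus carried by exactly the middle coordinates $n<k\le 2n$ must be reabsorbed. This is where the kernel hypothesis re-enters. Estimating $\sigma_n(\mathbf{z})_{\ell_p}\le n^{\sfrac{1}{p}-\sfrac{1}{q}}\lVert\mathbf{z}\rVert_{\ell_q}+\sigma_{2n}(\mathbf{z})_{\ell_p}$ by H\"older on the $n$-term middle block and then invoking the $\ell_q$-kernel bound once more turns the offending term into $(C_N+1)n^{\sfrac{1}{q'}-\sfrac{1}{p}}\sigma_{2n}(\mathbf{z})_{\ell_p}$. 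Collecting the three contributions $C_N$, $C_N$ and $C_N+1$ yields $C'=3C_N+1$, and hence the claimed constant $2C'=6C_N+2$; the only remaining book-keeping is the routine use of the ordering $q\ge q'\ge p$ in each embedding and Stechkin step.
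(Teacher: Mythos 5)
Your proposal is correct and follows essentially the same route as the paper: both pass through the kernel characterization of \Cref{prop:IO_to_best_n_term} in each direction, upgrade the $\ell_q$-kernel bound to an $\ell_{q'}$-kernel bound with constant $3C_N+1$ using finite-dimensional H\"older embeddings together with a Stechkin-type estimate, and then let the converse direction double the constant to $6C_N+2$. The only difference is bookkeeping: the paper splits off the $3n$ largest entries as a single head (paying the factor $3^{\sfrac{1}{q'}-\sfrac{1}{q}}\leq 3$ there) and handles the tail in one stroke with \Cref{thm:Sigmas}, i.e.\ Stechkin applied after removing the $2n$ largest entries, which directly yields $n^{\sfrac{1}{q'}-\sfrac{1}{p}}\sigma_{2n}(\mathbf{z})_{\ell_p}$; your three-block split at $n$ and $2n$ leaves Stechkin outputting $\sigma_n(\mathbf{z})_{\ell_p}$, which you then correctly reabsorb into $(C_N+1)\sigma_{2n}(\mathbf{z})_{\ell_p}$ by one further application of the kernel bound on the middle block. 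Both ledgers close at $3C_N+1$, so you recover the identical constant; the paper's single split merely renders your reabsorption step unnecessary.
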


The following statement is a slight modification of \cite[Thm.\ 11.6]{FoRa13}, which is used to derive the main result of this section thereafter.

\begin{prop}\label{thm:LowerBoundSingleNorm}
Let $\mathbf{A}\in\C^{m\times N}$ and $\Delta\colon \C^m \to\C^N$.
If $(\mathbf{A},\Delta) \in\operatorname{IO}(p,n,C_N)$ for some $p\geq 1$ and $n \in \N$, then
\begin{gather}
m\geq \frac{n\log(N/4n)}{4\log(2C_N+3)}.
\end{gather}
\end{prop}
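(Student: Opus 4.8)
The plan is to convert the instance-optimality hypothesis into a robust lower bound on distances to $\ker\mathbf A$, and then to turn a large combinatorial packing of sparse $0/1$-vectors into a packing of points in the quotient space $\C^N/\ker\mathbf A$, whose complex dimension is at most $\operatorname{rank}\mathbf A\le m$ (hence real dimension at most $2m$) regardless of the operator norm of $\mathbf A$. First I would apply \Cref{prop:IO_to_best_n_term} with $q=p$: since $(\mathbf A,\Delta)\in\operatorname{IO}(p,n,C_N)$, every $\mathbf v\in\ker\mathbf A$ satisfies $\lVert\mathbf v\rVert_{\ell_p}\le C_N\,\sigma_{2n}(\mathbf v)_{\ell_p}$, which quantitatively forbids null space vectors from concentrating on a few coordinates. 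This is the only place the hypothesis enters.

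Next I would select, via a Gilbert--Varshamov-type combinatorial lemma (as in \cite[Sec.\ 11]{FoRa13}), a family $S_1,\dots,S_M\subset[N]$ of $n$-element sets with $\lvert S_i\cap S_j\rvert<n/2$ for $i\ne j$ and $M\ge (N/4n)^{n/2}$, and set $\mathbf x_i\coloneqq\mathbf 1_{S_i}$. The objects to be packed are the images $\pi(\mathbf x_i)$ under the quotient map $\pi\colon\C^N\to\C^N/\ker\mathbf A$, where I equip the quotient with the quotient norm $\lVert\pi(\mathbf x)\rVert\coloneqq\operatorname{dist}_{\ell_p}(\mathbf x,\ker\mathbf A)$. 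Boundedness is immediate: $\lVert\pi(\mathbf x_i)\rVert\le\lVert\mathbf x_i\rVert_{\ell_p}=n^{1/p}$, so all points lie in a ball of radius $R=n^{1/p}$ centred at the origin.

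The crux, which I expect to be the main obstacle, is the separation estimate $\operatorname{dist}_{\ell_p}(\mathbf x_i-\mathbf x_j,\ker\mathbf A)\ge \lVert\mathbf x_i-\mathbf x_j\rVert_{\ell_p}/(C_N+1)\ge n^{1/p}/(C_N+1)$. Writing $\mathbf w\coloneqq \mathbf x_i-\mathbf x_j$, which is supported on $T\coloneqq S_i\cup S_j$ with $\lvert T\rvert\le 2n$ and $\lVert \mathbf w\rVert_{\ell_p}^p=\lvert S_i\triangle S_j\rvert>n$, I would bound $\lVert\mathbf w-\mathbf v\rVert_{\ell_p}$ for $\mathbf v\in\ker\mathbf A$ from below in two complementary ways: restricting to $T^{\mathrm C}$ gives $\lVert\mathbf w-\mathbf v\rVert_{\ell_p}\ge\lVert\mathbf v_{T^{\mathrm C}}\rVert_{\ell_p}\ge\sigma_{2n}(\mathbf v)_{\ell_p}\ge\lVert\mathbf v\rVert_{\ell_p}/C_N$ (using the null space estimate), while the triangle inequality gives $\lVert\mathbf w-\mathbf v\rVert_{\ell_p}\ge\lVert\mathbf w\rVert_{\ell_p}-\lVert\mathbf v\rVert_{\ell_p}$. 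Taking the maximum of the two bounds and optimizing over $\lVert\mathbf v\rVert_{\ell_p}$ (the two cross at $\lVert\mathbf v\rVert_{\ell_p}=\tfrac{C_N}{C_N+1}\lVert\mathbf w\rVert_{\ell_p}$) yields exactly $\lVert\mathbf w\rVert_{\ell_p}/(C_N+1)$. The delicacy here is that $C_N$ may be large, so a naive "exact recovery" separation argument fails; it is precisely this balancing of robustness against the triangle inequality that salvages a usable separation.

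Finally I would invoke the standard volumetric packing bound in the real $2m$-dimensional normed space $\C^N/\ker\mathbf A$: any family of points that is pairwise $r$-separated inside a ball of radius $R$ has cardinality at most $(2R/r+1)^{2m}$. With $R=n^{1/p}$ and $r=n^{1/p}/(C_N+1)$ this gives $M\le(2C_N+3)^{2m}$. Combining with $M\ge(N/4n)^{n/2}$ yields $(N/4n)^{n/2}\le(2C_N+3)^{2m}$, and taking logarithms produces $m\ge \tfrac{n\log(N/4n)}{4\log(2C_N+3)}$, as claimed. Once the quotient-space viewpoint is in place — so that the uncontrolled norm of $\mathbf A$ is replaced by the intrinsic $m$-dimensional quotient — the reduction and the volume count are routine, and the separation estimate is the only substantive step.
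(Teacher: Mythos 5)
Your proof is correct, and it reaches the paper's bound with the identical constants, but it routes the key step differently. The paper's proof never passes to the null space: it takes the same combinatorial family (\Cref{thm:SubsetCombinations}), forms the normalized vectors $\mathbf{z}^S = n^{-\sfrac{1}{p}}\mathbf{1}_S$, and proves directly that the sets $\mathbf{A}(\mathbf{z}^S + \varphi\mathbb{B}_p)$ with $\varphi = (2C_N+2)^{-1}$ are pairwise disjoint, by assuming two of them meet, applying the decoder $\Delta$ to the common image, and using the instance-optimality inequality twice to reach the contradiction $1 < (2C_N+2)\varphi = 1$; a volume comparison inside $(1+\varphi)\mathbf{A}(\mathbb{B}_p)$ then gives $\lvert\mathcal{U}\rvert \leq (2C_N+3)^{2\operatorname{rank}(\mathbf{A})}$. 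You instead first invoke \Cref{prop:IO_to_best_n_term} (with $q=p$) to extract the kernel bound $\lVert\mathbf{v}\rVert_{\ell_p} \leq C_N\sigma_{2n}(\mathbf{v})_{\ell_p}$, after which the decoder disappears entirely: your separation estimate $\operatorname{dist}_{\ell_p}(\mathbf{x}_i - \mathbf{x}_j, \ker\mathbf{A}) \geq \lVert\mathbf{x}_i-\mathbf{x}_j\rVert_{\ell_p}/(C_N+1)$, obtained by balancing the restriction-to-$T^{\mathrm{C}}$ bound against the triangle inequality, is exactly the disjointness statement of the paper read in the quotient norm (the paper's sets $\mathbf{A}(\mathbf{z}^S+\varphi\mathbb{B}_p)$ are disjoint if and only if $\pi(\mathbf{z}^S)$ and $\pi(\mathbf{z}^T)$ are $2\varphi$-separated in $\C^N/\ker\mathbf{A}$), and your packing lemma is the same volumetric count in disguise. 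What your route buys is modularity and a sharper conceptual statement: the lower bound is shown to depend only on the null space property of $\mathbf{A}$, so it applies verbatim to any matrix whose kernel satisfies that bound, independently of which decoder witnesses instance optimality; it also reuses a lemma the paper proves anyway. What the paper's route buys is self-containedness at this spot -- it needs only the definition of instance optimality, not \Cref{prop:IO_to_best_n_term}, and avoids the quotient-space formalism by working concretely with images of balls under $\mathbf{A}$.
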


\begin{theorem}\label{instance_vector}
Let $\mathbf{A}\in\C^{m\times N}$ and $\Delta\colon \C^m \to\C^N$. Suppose that $(\mathbf{A},\Delta)\in\operatorname{IO}(q,p,n,C_N)$ for $q\geq p\geq 1$. Then
\begin{gather}
    m\geq\frac{n\log(N/4n)}{4\log(12C_N+7)}.
\end{gather}
\end{theorem}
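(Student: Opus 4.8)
The plan is to reduce the mixed-norm instance-optimality hypothesis to the single-norm situation already handled by \Cref{thm:LowerBoundSingleNorm}, and then read off the sample bound while carefully tracking the constant.

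First I would invoke \Cref{thm:IOPreservation} with the choice $q'=p$. Since by assumption $q\geq p\geq 1$, the chain $q\geq q'=p\geq p$ required by that lemma holds, so there exists a reconstruction map $\Delta'\colon\C^m\to\C^N$ with
\begin{gather}
(\mathbf{A},\Delta')\in\operatorname{IO}(p,p,n,6C_N+2)=\operatorname{IO}(p,n,6C_N+2).
\end{gather}
In other words, lowering the outer exponent down to $p$ converts the $(q,p)$-instance-optimal pair into a genuinely $p$-instance-optimal pair, at the cost of inflating the constant from $C_N$ to $6C_N+2$. Crucially, this construction does not alter the matrix $\mathbf{A}$, so $m$, $N$ and $n$ remain the same.

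Then I would apply \Cref{thm:LowerBoundSingleNorm} to the pair $(\mathbf{A},\Delta')$, with its constant $C_N$ replaced by $6C_N+2$. This yields
\begin{gather}
m\geq\frac{n\log(N/4n)}{4\log\bigl(2(6C_N+2)+3\bigr)}.
\end{gather}
It remains only to simplify the denominator: $2(6C_N+2)+3=12C_N+7$, which is exactly the claimed bound. Since the right-hand side depends on $(\mathbf{A},\Delta)$ only through the unchanged quantities $m$, $n$, $N$ and the constant $C_N$, the resulting inequality is precisely the assertion for the original pair.

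As both ingredients are available as stated lemmas, there is essentially no hard step here; the only things to check are that the exponent choice $q'=p$ is admissible (which uses the hypothesis $q\geq p$) and that the constant bookkeeping produces $12C_N+7$ rather than some other expression. The conceptual content lies entirely in \Cref{thm:IOPreservation} and \Cref{thm:LowerBoundSingleNorm}, and the present theorem is simply their composition.
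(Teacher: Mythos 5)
Your proof is correct and is essentially identical to the paper's own argument: apply \Cref{thm:IOPreservation} with $q'=p$ to obtain $(\mathbf{A},\Delta')\in\operatorname{IO}(p,n,6C_N+2)$, then invoke \Cref{thm:LowerBoundSingleNorm}, with the constant bookkeeping $2(6C_N+2)+3=12C_N+7$ exactly as claimed.
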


\begin{proof}
\Cref{thm:IOPreservation} for $q'=p$ ensures the existence of a reconstruction map $\Delta'\colon \C^m \to\C^N$ such that $(\mathbf{A},\Delta')\in\operatorname{IO}(p,n,6C_N+2)$.
Now, \Cref{thm:LowerBoundSingleNorm} yields the desired result.
\end{proof}

\subsection{Minimal number of samples for sparse recovery}
With the tools from \Cref{sec:InstanceOptVec} at hand, we can establish a lower bound on the number $m$ of sampling points in $\mathbf{X}$ if a reconstruction operator shall achieve a certain recovery accuracy.
\begin{theorem}\label{instance_B1B1}Let $\mathcal{B} \subset L_2(\mu)$ be an ONB and $T_{\mathbf{X}} \colon \mathcal{A}_1 \to L_2$ a recovery operator of the form $T_{\mathbf{X}}(f) = T \circ f(\mathbf{X})$ with $T\colon \R^m \to L_2$.
Further, let $\lVert\cdot\rVert_Y$ be some norm and $C>0$ a constant.
If
\begin{equation}\label{eq:Instance_Optimality_A1A1}
\lVert f - T_{\mathbf{X}}(f) \rVert_{\mathcal{A}_1} \leq C \big(\sigma_n(f)_{\mathcal{A}_1}+E_J(f)_V\big)
\end{equation}
or
\begin{equation}\label{eq:Instance_Optimality_L2A1}
\lVert f - T_{\mathbf{X}}(f) \rVert_{L_2} \leq C\big(n^{-\sfrac{1}{2}}\sigma_n(f)_{\mathcal{A}_1} + E_J(f)_V\big)
\end{equation}
holds for all $f\in\mathcal{A}_1$ and $n\in\mathbb{N}$, then there exists $C'>0$ such that
\begin{equation}\label{eq:LowerBoundM}
m \geq C'n\log(\lvert J\rvert/4n).
\end{equation}
\end{theorem}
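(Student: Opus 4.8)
The plan is to reduce both cases to the vector instance-optimality lower bound of \Cref{instance_vector}. The key observation is that although \eqref{eq:Instance_Optimality_A1A1} and \eqref{eq:Instance_Optimality_L2A1} are required to hold for all $f\in\mathcal{A}_1$, it suffices to test them on the finite-dimensional subspace $V_J$. For $f\in V_J$ one may choose $g=f$ in \eqref{eq:ApproxError}, so the truncation error $E_J(f)_V$ vanishes and both hypotheses reduce to genuine best-$n$-term estimates. Identifying $f=\sum_{j\in J}z_jb_j$ with its coefficient vector $\mathbf{z}=[f]|_J\in\mathbb{C}^{\lvert J\rvert}$, the orthonormality of $\mathcal{B}$ and Parseval give $\lVert f\rVert_{L_2}=\lVert\mathbf{z}\rVert_{\ell_2}$, $\lVert f\rVert_{\mathcal{A}_1}=\lVert\mathbf{z}\rVert_{\ell_1}$, and $\sigma_n(f)_{\mathcal{A}_1}=\sigma_n(\mathbf{z})_{\ell_1}$. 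Moreover, restricted to $V_J$ the sampling map $f\mapsto f(\mathbf{X})$ is \emph{linear}, namely $f(\mathbf{X})=\mathbf{M}\mathbf{z}$ for the matrix $\mathbf{M}\coloneqq(b_j(x^l))_{1\le l\le m,\,j\in J}\in\mathbb{C}^{m\times\lvert J\rvert}$, so that $T_{\mathbf{X}}(f)=T(\mathbf{M}\mathbf{z})$. Crucially, $\mathbf{M}$ has exactly $m$ rows and $N\coloneqq\lvert J\rvert$ columns.

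The heart of the argument is to convert the $L_2$-valued output $T(\mathbf{M}\mathbf{z})$, which need not lie in $V_J$, into an honest vector decoder $\Delta\colon\mathbb{C}^m\to\mathbb{C}^{\lvert J\rvert}$ without enlarging the error. For \eqref{eq:Instance_Optimality_L2A1} I would let $\Delta(\mathbf{w})$ be the coefficient vector of $P_JT(\mathbf{w})$, where $P_J$ is the $L_2(\mu)$-orthogonal projection onto $V_J$. Since $P_Jf=f$ and $P_J$ is nonexpansive, Parseval then yields
\begin{equation*}
\lVert\mathbf{z}-\Delta(\mathbf{M}\mathbf{z})\rVert_{\ell_2}=\lVert P_J(f-T_{\mathbf{X}}(f))\rVert_{L_2}\le\lVert f-T_{\mathbf{X}}(f)\rVert_{L_2}\le C\,n^{-\sfrac{1}{2}}\sigma_n(\mathbf{z})_{\ell_1},
\end{equation*}
i.e.\ $(\mathbf{M},\Delta)\in\operatorname{IO}(2,1,n,C)$. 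For \eqref{eq:Instance_Optimality_A1A1} the finiteness of the left-hand side forces $T_{\mathbf{X}}(f)\in\mathcal{A}_1$, and I would instead take $\Delta(\mathbf{w})\coloneqq[T(\mathbf{w})]|_J$, the restriction of the coefficients to $J$. As coordinate restriction is nonexpansive on $\ell_1$ and $[f]_j=0$ for $j\notin J$, this gives $\lVert\mathbf{z}-\Delta(\mathbf{M}\mathbf{z})\rVert_{\ell_1}\le\lVert f-T_{\mathbf{X}}(f)\rVert_{\mathcal{A}_1}\le C\,\sigma_n(\mathbf{z})_{\ell_1}$, i.e.\ $(\mathbf{M},\Delta)\in\operatorname{IO}(1,1,n,C)$.

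In both cases \Cref{instance_vector} applies with $N=\lvert J\rvert$ (taking $q=2,p=1$, respectively $q=p=1$) and produces $m\ge \frac{n\log(\lvert J\rvert/4n)}{4\log(12C+7)}$, so that \eqref{eq:LowerBoundM} holds with $C'\coloneqq(4\log(12C+7))^{-1}>0$. I expect the only genuine difficulty to be this lifting step: one must verify that the chosen post-processing (orthogonal projection in the $L_2$ case, coordinate restriction in the $\mathcal{A}_1$ case) is the \emph{right} nonexpansive map so that a guarantee phrased for functions in $L_2$ or $\mathcal{A}_1$ descends to a genuine $(q,p)$-instance-optimal pair on $\mathbb{C}^{\lvert J\rvert}$; everything afterwards is a direct invocation of \Cref{instance_vector}.
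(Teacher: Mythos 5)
Your proposal is correct and follows essentially the same route as the paper's proof: restrict to $f\in V_J$ so the truncation error vanishes, identify $f$ with its coefficient vector, observe that sampling becomes multiplication by the matrix $(b_j(x^l))_{l,j}$, define $\Delta$ by restricting the coefficients of $T(\cdot)$ to $J$ (your orthogonal-projection and coordinate-restriction decoders are in fact the same map), and invoke the vector instance-optimality lower bounds. The only immaterial difference is that for the $\mathcal{A}_1$ case the paper applies \Cref{thm:LowerBoundSingleNorm} directly to $(\mathbf{A},\Delta)\in\operatorname{IO}(1,n,C)$, giving a marginally better constant than your detour through \Cref{instance_vector}.
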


\begin{proof}
We want to employ \Cref{thm:LowerBoundSingleNorm} and \Cref{instance_vector}, for which we consider the measurement matrix $\mathbf{A}\in\C^{m\times\lvert J\rvert}$ with $A_{l,j}= b_j(x^l)$.
Further, we define the recovery operator $(\Delta(\mathbf{c}))_j=[T(\mathbf{c})]_j$ for $j\in J$ and $\mathbf{c}\in\mathbb{R}^m$. Now, for any $\mathbf{z}\in\C^{\lvert J\rvert}$ we define
\begin{gather}
f_\mathbf{z} = \sum\limits_{j\in J} z_jb_j\in V_J.
\end{gather}
By construction, we have $\mathbf{Az} = f_{\mathbf z}(\mathbf X)$.
In the following, we distinguish two cases.
If $T_{\mathbf{X}}$ satisfies \eqref{eq:Instance_Optimality_A1A1}, we apply the definition of the space $\mathcal{A}_1$ in \eqref{eq:DefWiener} to obtain
\begin{align}\label{vec2fct_A1A1}
\begin{split}
\lVert \mathbf{z} - \Delta(\mathbf{A}\mathbf{z}) \rVert_{\ell_1} &= \lVert f_{\mathbf{z}} - P_J \circ T_{\mathbf{X}}(f_{\mathbf{z}})\rVert_{\mathcal{A}_1} =\lVert f_{\mathbf{z}} - T_{\mathbf{X}}(f_{\mathbf{z}})\rVert_{\mathcal{A}_1}\\
&\leq C \big(\sigma_n(f_{\mathbf{z}})_{\mathcal{A}_1}+E_J(f_{\mathbf{z}})_Y\big) = C\sigma_n(\mathbf{z})_{\ell_1},
\end{split}
\end{align}
and see that $(\mathbf{A},\Delta)\in\operatorname{IO}(1,n,C)$.
Hence, applying \Cref{thm:LowerBoundSingleNorm} yields \eqref{eq:LowerBoundM}.
If instead $\eqref{eq:Instance_Optimality_L2A1}$ holds, we apply the Parseval theorem to get
\begin{align}\label{vec2fct}
\begin{split}
\lVert \mathbf{z} - \Delta(\mathbf{A}\mathbf{z}) \rVert_{\ell_2} &= \lVert f_{\mathbf{z}} - P_J \circ T_{\mathbf{X}}(f_{\mathbf{z}})\rVert_{L_2} = \lVert f_{\mathbf{z}} - T_{\mathbf{X}}(f_{\mathbf{z}}) \rVert_{L_2}\\
&\leq C\bigl(n^{-\sfrac{1}{2}}\sigma_n(f_{\mathbf{z}})_{\mathcal{A}_1} + E_J(f_{\mathbf{z}})_Y\bigr) = Cn^{-\sfrac{1}{2}}\sigma_n(\mathbf{z})_{\ell_1},
\end{split}
\end{align}
that is $(\mathbf{A},\Delta)\in\operatorname{IO}(2,1,n,C)$.
Thus, applying \Cref{instance_vector} concludes the proof.
\end{proof}

\section{The multivariate trigonometric systems as a special case}\label{sec:Fourier}

In what follows, we specify our results for the multivariate trigonometric system
\begin{gather}
\mathcal B= \mathcal{T}^d = \{\exp(2\pi\mathrm{i}\mathbf{k}\cdot)\colon \; \mathbf{k}\in \mathbb{Z}^d\}
\end{gather}
defined on the torus $\Omega= [0, 1]^d$.
By $Q_M\coloneqq [M,M]^d\cap\mathbb{Z}^d$ we denote the $d$-dimensional discrete $M$-cube, and by $\mathcal{T}(Q_M) = \spann_\C(\exp(2\pi\mathrm{i}\mathbf{k}\cdot)\colon \; \mathbf{k}\in Q_M)$ the associated trigonometric polynomials.
For $D \in \mathbb{N}$, we consider the equidistant grid
\begin{gather}
    G\coloneqq G(D,d) \coloneqq \Big\{ \frac{\mathbf{n}}{2D}\colon \; \mathbf{n} \in \{0, \ldots, 2D\}^d \Big\} \subset \mathbb{T}^d.
\end{gather}
If we randomly draw $m$ iid points $\mathbf{X} = \{\mathbf{x}^l\colon \; l\in[m]\}$ from the discrete uniform distribution $\mu_G = \lvert G\rvert^{-1} \sum_{\mathbf{x}\in G} \delta_\mathbf{x}$, we can consider the associated random Fourier matrix
\begin{equation}\label{eq:FourierMat}
    \mathbf{A} \coloneqq\big(m^{-\sfrac{1}{2}} \exp(2\pi\mathrm{i}\mathbf{k}\cdot \mathbf{x}^{l})\big)_{l\in[m],\mathbf{k}\in Q_M}.
\end{equation} 

The following theorem has been proven in \cite{MPU25_1} for \rlasso \ and can be proven analogously for \omp. It relies on the RIP bound by Haviv and Reyev in \cite{HaRe17}. 
Note that we have the $L_\infty$-error as well as the best approximation on the right-hand side.
Its proof relies on a specifically designed multivariate de-la-Valle{\'e}-Poussin-type operator, see \cite{JUV23}.

\begin{theorem}\label{Thm:trigo}
There exist universal constants $\alpha, C, \gamma>0$ such that for all $M,n \in \mathbb{N}$ and $D \coloneqq (2d+1)M$ the following holds.
If we draw
\begin{gather}\label{eq:NumberSamplesFourier}
     m\geq \alpha d n \cdot \log^2(n+1) \cdot \log(D+1)
\end{gather}
samples $\mathbf{X}=\{\mathbf{x}^1,\ldots, \mathbf{x}^m\} \overset{iid}{\sim} \mu_G$, then with probability at least $1-(2D+1)^{-d\gamma \log (n+1)}$ the decoder $H_{\mathbf{X}}$, which is either \rlasso \ from \Cref{def:square_root_lasso} or \omp \ from \Cref{def:OMP} on the search space $V_{Q_D}$, satisfies
\begin{gather}\label{eq:rLasso_Fourier}
	\big\lVert f - H_{\mathbf{X}}(f)\big\rVert_{L_q} \leq  Cn^{\sfrac{1}{2}-\sfrac{1}{q}} \left(\sigma_n(f;\mathcal{T}^d)_{L_\infty} + E_{Q_M}(f;\mathcal{T}^d)_{L_\infty}\right)
\end{gather}
for any $f \in C(\mathbb{T}^d)$ and $2 \leq q \leq \infty$.
\end{theorem}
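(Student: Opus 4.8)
The plan is to follow the template of \Cref{thm:Error_Recovery_Ops}, with two modifications tailored to the trigonometric setting: the RIP of the random Fourier matrix is supplied by the Haviv--Reyev bound from \cite{HaRe17} rather than the generic BOS estimate, and the orthogonal projection $P_J$ is replaced by a de-la-Vallée-Poussin-type operator $V_M$ from \cite{JUV23}. The latter is precisely what allows both terms on the right-hand side of \eqref{eq:rLasso_Fourier} to be expressed through $L_\infty$-quantities, namely the best $n$-term error $\sigma_n(f)_{L_\infty}$ and the truncation error $E_{Q_M}(f)_{L_\infty}$, instead of the Wiener-norm quantity appearing in \eqref{eq:ErrorEstimateRIP}.

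First I would invoke the RIP result of \cite{HaRe17} for the $m\times\lvert Q_D\rvert$ random Fourier matrix associated with the search space $V_{Q_D}$ (cf.\ \eqref{eq:FourierMat}): for $m$ as in \eqref{eq:NumberSamplesFourier} it yields, with probability at least $1-(2D+1)^{-d\gamma\log(n+1)}$, a small restricted isometry constant of the required order (namely $\delta_{2n}<\sfrac{1}{3}$ for \rlasso\ and $\delta_{26n}<\sfrac{1}{6}$ for \omp). On this event, \Cref{thm:RIP_implies_NSP} together with \Cref{prop:rLasso_for_NSP}, respectively \Cref{prop:OMP_for_RIP} directly, provides a coefficient-level bound of the same form as \eqref{eq:Recovery_lq_error},
\begin{gather*}
\lVert\mathbf{v}-\mathbf{h}(\mathbf{y})\rVert_{\ell_q}\leq\beta\bigl(n^{\sfrac{1}{q}-1}\sigma_n(\mathbf{v})_{\ell_1}+n^{\sfrac{1}{q}-\sfrac{1}{2}}\lVert\mathbf{A}\mathbf{v}-\mathbf{y}\rVert_{\ell_2}\bigr),
\end{gather*}
for every $\mathbf{v}\in\C^{\lvert Q_D\rvert}$ and $q\in[1,2]$, now with $B=1$ since $\lvert\exp(2\pi\mathrm{i}\mathbf{k}\cdot)\rvert\equiv 1$.

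The decisive step is the choice of the reference vector. Let $p$ be a (near-)best $n$-term $L_\infty$-approximation of $f$, so that $\lVert f-p\rVert_{L_\infty}=\sigma_n(f)_{L_\infty}$ and $p$ carries at most $n$ nonzero Fourier coefficients. Since $V_M$ acts as a Fourier multiplier supported on $Q_D$, the function $g\coloneqq V_Mp$ again has at most $n$ nonzero coefficients, all indexed by $Q_D$; hence $\mathbf{v}\coloneqq[g]$ satisfies $\sigma_n(\mathbf{v})_{\ell_1}=0$, which removes the sparsity term entirely. The residual is controlled by the sup-norm of the sampled difference, $\lVert\mathbf{A}\mathbf{v}-\mathbf{y}\rVert_{\ell_2}\leq\lVert g-f\rVert_{L_\infty}$, and I would bound $\lVert g-f\rVert_{L_\infty}\leq\lVert V_M(p-f)\rVert_{L_\infty}+\lVert V_Mf-f\rVert_{L_\infty}\lesssim\sigma_n(f)_{L_\infty}+E_{Q_M}(f)_{L_\infty}$, using the $L_\infty$-boundedness of $V_M$ and the fact that it reproduces $\mathcal{T}(Q_M)$ (so $\lVert V_Mf-f\rVert_{L_\infty}\leq(1+\lVert V_M\rVert)E_{Q_M}(f)_{L_\infty}$). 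Feeding this into the coefficient bound with $q=\infty$ (via $q=1$ and $B=1$) and with $q=2$ (via Parseval), then interpolating for $2<q<\infty$ exactly as in the proof of \Cref{thm:Error_Recovery_Ops}, yields \eqref{eq:rLasso_Fourier} with a constant depending only on $\lVert V_M\rVert$ and $\beta$.

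The hard part is not this reduction but the existence of the multivariate operator $V_M$ with the required combination of properties: a translation-invariant, hence sparsity-preserving, near-projection that reproduces $\mathcal{T}(Q_M)$, maps into $\mathcal{T}(Q_D)$ with $D=(2d+1)M$, and is bounded on $L_\infty$ with a constant growing only mildly in $d$. This is exactly the content imported from \cite{JUV23}; once it is in hand, checking that the Haviv--Reyev probability and sample count match the claimed $m\gtrsim dn\log^2(n+1)\log(D+1)$ is routine bookkeeping, and the passage from \rlasso\ to \omp\ requires only swapping \Cref{prop:rLasso_for_NSP} for \Cref{prop:OMP_for_RIP}.
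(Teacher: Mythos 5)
Your proposal matches the paper's approach: the paper does not write out this proof (it defers to \cite{MPU25_1}) but states that it rests on exactly the two ingredients you use -- the RIP bound of \cite{HaRe17} for the random Fourier matrix and the specially designed multivariate de-la-Vall\'ee-Poussin operator from \cite{JUV23} -- and your assembly (the sparsity-preserving multiplier applied to a near-best $n$-term approximant as reference vector $\mathbf{v}$, the residual controlled via $\lVert\mathbf{A}\mathbf{v}-\mathbf{y}\rVert_{\ell_2}\leq\lVert g-f\rVert_{L_\infty}$, then the cases $q=\infty$, $q=2$ and interpolation) is the correct way to combine them. One small sharpening: for the constant $C$ to be universal as claimed, the $L_\infty$-operator norm of $V_M$ must be bounded by an absolute constant rather than merely ``growing mildly in $d$,'' which is precisely what the construction of \cite{JUV23} provides at the price of the enlarged frequency cube $D=(2d+1)M$.
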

Let us compare this with the general result \Cref{thm:Error_Recovery_Ops}.
Since $\mathcal B = \mathcal{T}^d$ is bounded by one in $L_\infty$, the number of required samples \eqref{eq:NumberSamples} and \eqref{eq:NumberSamplesFourier} behave similarly.
Compared to the recovery estimate \eqref{eq:ErrorEstimateRIP}, the estimate \eqref{eq:rLasso_Fourier} involves $\sigma_n(f;\mathcal{T}^d)_{L_\infty}$ instead of $n^{-\sfrac{1}{2}}\sigma_n(f;\mathcal{T}^d)_{\mathcal{A}_1}$.
This has the striking advantage that one may directly insert classical known bounds on $\sigma_n(f;\mathcal{T}^d)_{L_\infty}$, see \cite{Belinskii1989, MSU25, TeUl22} and \cite[Sec.\ 7]{DTU18} for an overview.

\subsection[Minimal number of samples for sparse recovery on the \textit{d}-torus]{Minimal number of samples for sparse recovery on the $d$-torus}
With the next theorem, we may get rid of $\sigma_{2n}(f;\mathcal{T}^d)_{L_\infty(\mathbb{T}^d)}$ and use the results from \Cref{instance_B1B1}.

\begin{theorem}[{\cite[Thm.\ 4.5]{MSU24_1}}]\label{thm:best_n_term_trig_pol}
    For a trigonometric polynomial $f\in\mathcal{T}(Q_M)$ on $\mathbb{R}^d$ we have
    \begin{gather}
        \sigma_{4n}\bigl(f,\mathcal{T}^d\bigr)_{L_\infty}\leq C \sqrt{d}n^{-\sfrac{1}{2}}\log(N)^{\sfrac{1}{2}}\lVert f\rVert_{\mathcal{A}_1}
    \end{gather}
    with $N=(2M+1)^d$ and a constant $C<460$.
\end{theorem}

The following result shows that $m = \mathcal{O}(n)$ samples do not suffice to obtain \eqref{eq:rLasso_Fourier} for all~$f$.

\begin{theorem}
For every recovery map $T_{\mathbf{X}} \colon C(\mathbb{T}^d) \to L_2(\mathbb{T}^d)$ of the form $T_{\mathbf{X}}(f) = T \circ f(\mathbf{X})$ with $T\colon \C^m \to L_2(\mathbb{T}^d)$ that satisfies the condition
\begin{equation}\label{eqn:InsOptLinf}
\lVert f - T_{\mathbf{X}}(f) \rVert_{L_2} \leq C(\sigma_{2n}(f;\mathcal{T}^d)_{L_\infty} + E_{Q_M}(f)_Y)
\end{equation}
for every $f\in C(\mathbb{T}^d)$ and $n \in \N$, the number of required samples is
\begin{gather}
    m \geq n\, \frac{\log(N/4n)}{4\log(C'd\log(N))},
\end{gather}
where $C'$ is an absolute constant and $N=(2M+1)^d$.
\end{theorem}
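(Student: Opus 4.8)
The plan is to reduce the stated claim to the vector-level lower bound from \Cref{instance_B1B1} by combining the $L_\infty$-best-$n$-term bound for trigonometric polynomials in \Cref{thm:best_n_term_trig_pol} with the argument used in the proof of \Cref{instance_B1B1}. The key observation is that the hypothesis \eqref{eqn:InsOptLinf} controls the error in terms of $\sigma_{2n}(f;\mathcal{T}^d)_{L_\infty}$, which is a \emph{larger} quantity than $n^{-\sfrac12}\sigma_n(f;\mathcal{T}^d)_{\mathcal{A}_1}$ (up to a $\sqrt{d\log N}$ factor by \Cref{thm:best_n_term_trig_pol}); thus \eqref{eqn:InsOptLinf} implies a bound of the form \eqref{eq:Instance_Optimality_L2A1} but with a constant that now scales like $C\sqrt{d\log N}$.

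First I would set up exactly the reduction from the proof of \Cref{instance_B1B1}: take $\mathbf{A}\in\C^{m\times N}$ with $A_{l,\mathbf{k}}=\exp(2\pi\mathrm{i}\mathbf{k}\cdot\mathbf{x}^l)$ for $\mathbf{k}\in Q_M$ (so $N=(2M+1)^d$), define the reconstruction $(\Delta(\mathbf{c}))_{\mathbf{k}}=[T(\mathbf{c})]_{\mathbf{k}}$, and for $\mathbf{z}\in\C^N$ set $f_{\mathbf z}=\sum_{\mathbf{k}\in Q_M}z_{\mathbf k}\exp(2\pi\mathrm{i}\mathbf{k}\cdot)\in\mathcal{T}(Q_M)$, so that $\mathbf{A}\mathbf{z}=f_{\mathbf z}(\mathbf X)$ and $E_{Q_M}(f_{\mathbf z})_Y=0$. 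Next I would invoke \Cref{thm:best_n_term_trig_pol}: since $f_{\mathbf z}\in\mathcal{T}(Q_M)$, we have $\sigma_{4n}(f_{\mathbf z};\mathcal{T}^d)_{L_\infty}\leq C_0\sqrt{d}\,(4n)^{-\sfrac12}\log(N)^{\sfrac12}\lVert f_{\mathbf z}\rVert_{\mathcal{A}_1}$ with $C_0<460$ (applying the theorem with $4n$ in place of its $4n$, i.e.\ at sparsity level $n$). Plugging $f_{\mathbf z}$ into \eqref{eqn:InsOptLinf} and using Parseval together with $\lVert f_{\mathbf z}\rVert_{\mathcal{A}_1}=\lVert\mathbf z\rVert_{\ell_1}$ yields, for every $\mathbf{z}\in\ker(\mathbf{A})$,
\begin{equation}
\lVert\mathbf{z}\rVert_{\ell_2}=\lVert f_{\mathbf z}-T_{\mathbf X}(f_{\mathbf z})\rVert_{L_2}\leq C\,\sigma_{2n}(f_{\mathbf z};\mathcal{T}^d)_{L_\infty}\leq \tilde C\,\sqrt{d\log N}\,n^{-\sfrac12}\lVert\mathbf z\rVert_{\ell_1},
\end{equation}
where I have used $\sigma_{2n}\leq\sigma_{n}$ and absorbed constants into $\tilde C$. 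This is precisely the null-space inequality \eqref{eqn:IOqNorm} for the pair $(q,p)=(2,1)$ with effective constant $C_N\asymp \tilde C\sqrt{d\log N}$.

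From here the converse direction of \Cref{prop:IO_to_best_n_term} produces a map $\Delta'$ with $(\mathbf{A},\Delta')\in\operatorname{IO}(2,1,n,2C_N)$, and then \Cref{instance_vector} gives
\begin{equation}
m\geq\frac{n\log(N/4n)}{4\log(12\cdot 2C_N+7)}=\frac{n\log(N/4n)}{4\log(C'd\log N)}
\end{equation}
after collecting the $d\log N$ dependence inside the logarithm into the absolute constant $C'$. The main obstacle is bookkeeping rather than conceptual: one must carefully track how the $\sqrt{d\log N}$ factor from \Cref{thm:best_n_term_trig_pol} enters the instance-optimality constant $C_N$ and then verify that $\log(12C_N+7)$ collapses to $\log(C'd\log N)$ with an honest absolute constant (using $\log(\sqrt{d\log N})\leq\log(d\log N)$ and that the additive $7$ and multiplicative $24$ are harmless for $N\geq 2$, $d\geq 1$). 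A minor technical point to state cleanly is that \Cref{thm:best_n_term_trig_pol} applies only because $f_{\mathbf z}$ is genuinely a polynomial in $\mathcal{T}(Q_M)$, which is why we only need it for this restricted class; the hypothesis \eqref{eqn:InsOptLinf} is assumed to hold for all of $C(\mathbb{T}^d)$, so in particular for these $f_{\mathbf z}$, and the vanishing of $E_{Q_M}(f_{\mathbf z})_Y$ is what removes the second summand.
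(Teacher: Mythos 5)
There is a genuine gap at the heart of your null-space verification. You apply \Cref{thm:best_n_term_trig_pol} to $f_{\mathbf z}$ itself, which produces the \emph{full} norm $\lVert f_{\mathbf z}\rVert_{\mathcal{A}_1}=\lVert\mathbf z\rVert_{\ell_1}$ on the right-hand side, so your final inequality reads $\lVert\mathbf{z}\rVert_{\ell_2}\leq \tilde C\sqrt{d\log N}\,n^{-1/2}\lVert\mathbf z\rVert_{\ell_1}$. This is \emph{not} the null-space inequality \eqref{eqn:IOqNorm}, which requires the tail $\sigma_{2n}(\mathbf{z})_{\ell_1}$ in place of $\lVert\mathbf z\rVert_{\ell_1}$; since $\sigma_{2n}(\mathbf{z})_{\ell_1}\leq\lVert\mathbf z\rVert_{\ell_1}$, your inequality is strictly weaker. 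In fact it is vacuous in the relevant regime: because $\lVert\cdot\rVert_{\ell_2}\leq\lVert\cdot\rVert_{\ell_1}$ holds for every vector, your inequality is satisfied by \emph{every} matrix (even one with $m=0$ rows) as soon as $n\leq \tilde C^2 d\log N$, so it cannot imply any lower bound on $m$. The missing idea is the subtraction trick the paper uses: choose an $n$-sparse trigonometric polynomial $s$ with $\lVert f_{\mathbf z}-s\rVert_{\mathcal{A}_1}=\sigma_n(f_{\mathbf z})_{\mathcal{A}_1}$, observe that $\sigma_{2n}(f_{\mathbf z})_{L_\infty}\leq\sigma_n(f_{\mathbf z}-s)_{L_\infty}$, and apply \Cref{thm:best_n_term_trig_pol} to $f_{\mathbf z}-s\in\mathcal{T}(Q_M)$; only then does the right-hand side become $\sigma_n(f_{\mathbf z})_{\mathcal{A}_1}\leq\sigma_n(\mathbf{z})_{\ell_1}$, giving genuine $(2,1)$--$n$--instance optimality with constant $C_N\asymp\sqrt{d\log N}$.

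A second, related error: the identity $\lVert\mathbf{z}\rVert_{\ell_2}=\lVert f_{\mathbf z}-T_{\mathbf X}(f_{\mathbf z})\rVert_{L_2}$ for $\mathbf{z}\in\ker(\mathbf{A})$ is false. For a kernel vector, $f_{\mathbf z}(\mathbf X)=\mathbf 0$ and hence $T_{\mathbf X}(f_{\mathbf z})=T(\mathbf 0)$, which need not vanish, and $T_{\mathbf X}(f_{\mathbf z})$ may also have Fourier content outside $Q_M$; Parseval only gives $\lVert\mathbf{z}-\Delta(\mathbf 0)\rVert_{\ell_2}=\lVert f_{\mathbf z}-P_{Q_M}T_{\mathbf X}(f_{\mathbf z})\rVert_{L_2}\leq\lVert f_{\mathbf z}-T_{\mathbf X}(f_{\mathbf z})\rVert_{L_2}$. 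The paper's route avoids the kernel/NSP detour altogether: for \emph{all} $\mathbf z\in\C^N$ one bounds $\lVert\mathbf{z}-\Delta(\mathbf{A}\mathbf{z})\rVert_{\ell_2}=\lVert f_{\mathbf z}-P_{Q_M}\circ T_{\mathbf X}(f_{\mathbf z})\rVert_{L_2}\leq\lVert f_{\mathbf z}-T_{\mathbf X}(f_{\mathbf z})\rVert_{L_2}\leq C\sigma_{2n}(f_{\mathbf z})_{L_\infty}$, then uses the subtraction trick above to conclude $(\mathbf{A},\Delta)\in\operatorname{IO}(2,1,n,C\tilde C\sqrt{d}\log(N)^{1/2})$ and applies \Cref{instance_vector} directly, so the converse direction of \Cref{prop:IO_to_best_n_term} is never needed. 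Your final constant bookkeeping ($\log(12C_N+7)\leq\log(C'd\log N)$) is fine once these two points are repaired.
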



\begin{proof}
We follow the proof of \Cref{instance_B1B1} up to \eqref{vec2fct}.
Using the assumption \eqref{eqn:InsOptLinf} then yields
\begin{equation}
\lVert \mathbf{z} - \Delta(\mathbf{A}\mathbf{z}) \rVert_{\ell_2} = \lVert f_{\mathbf{z}} - P_{Q_M}\circ T_{\mathbf{X}}(f_{\mathbf{z}}) \rVert_{L_2} \leq C\sigma_{2n}(f_{\mathbf{z}})_{L_\infty}\,,    
\end{equation}
where $P_{Q_M}$ denotes the projection onto $\mathcal{T}(Q_M)$.
Next, we choose an $n$-sparse trigonometric polynomial $s$ such that $\lVert f_\mathbf{z} - s \rVert_{\mathcal{A}_1} = \sigma_n(f_\mathbf{z})_{\mathcal{A}_1}$, which is possible since $f$ is a trigonometric polynomial, i.e.\ we only consider finitely many indices. We obviously see that $\sigma_{2n}(f_{\mathbf{z}})_{L_\infty} \leq \sigma_{n}(f_{\mathbf{z}}-s)_{L_\infty}$.
Since $f_\mathbf{z}-s \in \mathcal{T}(Q_M)$ we get, after employing \Cref{thm:best_n_term_trig_pol},
\begin{equation}
\sigma_{2n}(f_\mathbf{z})_{L_\infty} \leq \sigma_n(f_\mathbf{z}-s)_{L_\infty} \leq \tilde{C}\sqrt{d} n^{-\sfrac{1}{2}} \log(N)^{\sfrac{1}{2}} \lVert f_\mathbf{z} -s \rVert_{\mathcal{A}_1}
\end{equation}
where $\tilde{C}\geq 1$. Now, due to the fact that $\lVert f_\mathbf{z} -s \rVert_{\mathcal{A}_1} = \sigma_n(f_\mathbf{z})_{\mathcal{A}_1} \leq 
\sigma_n(\mathbf{z})_{\ell_1}$, this implies that $(\mathbf{A},\Delta)\in\operatorname{IO}(2,1,n,C\cdot\tilde{C}\sqrt{d}\log(N)^{\sfrac{1}{2}})$, such that the claim follows by applying \Cref{instance_vector}.
\end{proof}

\begin{remark}\label{rem:IO} The result in \Cref{Thm:trigo} has an interesting counterpart for optimal approximation from linear subspaces using samples. In \cite[Thm.\ 17]{KPUU24}, which represents a local version of \cite{LiTe22}, it is shown that for any $n$-dimensional subspace $V_n \subset C(\Omega)$ we can find $m = 4n$ points $\mathbf{X} = \{x^1,\ldots,x^m\}$ such that the classical least-squares decoder
\begin{equation}\label{eq:lsqrop}
    A(\mathbf{X},V_n)f\coloneqq \argmin\limits_{g\in V_n}\Big(\frac{1}{m}\sum\limits_{i=1}^m |f(x_i)-g(x_i)|^2\Big)^{1/2}
\end{equation}
satisfies for all $f\in C(\Omega)$ that
\begin{equation}
    \|f-A(\mathbf{X},V_n)f\|_{L_2} \leq 84\inf\limits_{g \in V_n}\|f-g\|_{{L_\infty}}.
\end{equation}
This indicates a scaling of the number $m$ of samples as $m=\mathcal O(n)$.
However, these are adapted to $V_n$.
On the other hand, \eqref{eqn:InsOptLinf} shows that a statement of the form
\begin{equation}\label{eq:IO}
    \|f-T_{\mathbf{X}}f\|_{L_2} \leq C\Big(\inf\limits_{|J|\leq n}\inf\limits_{c_k\in \C, k\in J} \Big\|f-\sum\limits_{k\in J} c_kb_k\Big\|_{L_\infty} + \inf\limits_{g \in \mathcal{T}(Q_M)}\|f-g\|_{L_{\infty}}\Big)
\end{equation}
for all $f\in C(\mathbb{T}^d)$ requires
\begin{equation}
    m\geq cn\frac{d\log(M/4n)}{\log(d\log(M))}
\end{equation}
many samples no matter what point sets $\mathbf{X}$ and decoder $T_{\mathbf{X}}$ we use.
The second summand in \eqref{eq:IO} cannot be dropped since a corresponding statement cannot hold true with finitely many samples.
This shows that a scaling of $m=\mathcal O(n)$ is not possible if we switch from linear subspaces to $n$-term sums.  
\end{remark}
\section{Numerical experiments}\label{sec:Numerics}

Now, we verify our theoretical observations with numerical simulations for the Fourier system from \Cref{sec:Fourier} as well as the Chebyshev system.
First, we briefly describe the key components of our implementations.
The \texttt{Python} code is available on GitHub\footnote{\href{https://github.com/Zeppo1994/SparseRecovery}{https://github.com/Zeppo1994/SparseRecovery}}.

\subsection{Implementation details}
\subsubsection{Scalable matrix-vector multiplications}
Since the size $\lvert J\rvert$ of the search space commonly scales with the dimension $d$ (grid-based discretizations even scale exponentially), we require a memory-efficient and scalable implementation of the matrix-vector products with the sampling matrix  $\mathbf{A} \in \mathbb{C}^{m \times \lvert J\rvert}$ introduced in \eqref{eq:matrix}, and its adjoint.
In certain instances, such as for the Fourier system with dimension $d \leq 4$, see \cite{B1995,PlPoStTa23}, fast algorithms can beat the naive computational complexity $\mathcal{O}(m\lvert J\rvert)$.
Instead of restricting ourselves to such special cases, we use a parallel and matrix-free implementation for generic $\mathbf{A}$ based on \texttt{PyKeops}  \cite{ChaFeyGla2021}.
To this end, we require (preferably simple) closed-form formulas for the atoms $b_j$.
Compared to a classical matrix-based implementation, we trade memory efficiency for the need to recompute the entries of $\mathbf{A}$ on the fly.
Our implementation also enables straightforward GPU acceleration using \texttt{PyTorch}.
Since we typically have $m \ll \lvert J\rvert$, the performance gap of our approach towards fast methods is often relatively small.

\subsubsection{Primal-dual method for \rlasso}

The \rlasso \ decoder from \Cref{def:square_root_lasso} involves a non-smooth convex optimization problem whose efficient numerical solution has been investigated in several works, see \cite{AdcCol2025,BeCheWa11,MayMelKra2024}.
Following Adcock et al.\ (\cite{AdcCol2025}), we deploy the primal-dual hybrid gradient method (PDHGM) \cite{ChaPoc2011} with restarts to minimize the \rlasso \ objective
\begin{equation}\label{eq:Sq_Lasso_Numerics}
		\mathcal J(\mathbf z) \coloneqq \lVert \mathbf{A} \mathbf{z} - {\mathbf{y}}\rVert_{\ell_2} + \tfrac{1}{\lambda} \lVert\mathbf{z}\rVert_{\ell_1}.
\end{equation}
To this end, we perform the decomposition $\mathcal J(\mathbf z) = F(\mathbf{A} \mathbf{z}) + G(\mathbf{z})$ with $F(\mathbf{x}) = \lVert \mathbf x - \mathbf y \rVert_{\ell_2}$ and $G(\mathbf{z}) = \frac{1}{\lambda}\lVert \mathbf{z} \rVert_{\ell_1}$.
The resulting PDHGM iterations are given in \Cref{alg:PDHGM}.
This algorithm relies on the proximal operators of $G$ and the convex conjugate $F^*$, which are the softshrinkage
\begin{equation}
    \mathrm{prox}_{\tau G}(x) = S_{\sfrac{\tau}{\lambda}}(x) = \begin{cases}
        \mathrm{sign}(x) (\vert x \vert - \sfrac{\tau}{\lambda}) & \text{if } \vert x \vert > \sfrac{\tau}{\lambda},\\
        0 & \text{else},
    \end{cases}
\end{equation}
(which is applied entry-wise) and the rescaling  $\mathrm{prox}_{\sigma F^*}(\mathbf q) = 
(\mathbf q - \sigma \mathbf y)/\min(1, \lVert \mathbf q - \sigma \mathbf y \rVert_{\ell_2})$.
The averaged iterates \smash{$\mathbf z_\mathrm{avg}^{(K)}$} and \smash{$\mathbf q_\mathrm{avg}^{(K)}$} converge if the step sizes satisfy $\tau \sigma \leq 1/ \lVert \mathbf{A} \rVert_{2}^2$, see \cite[Thm.\ 1]{ChaPoc2011}.
If unknown, the spectral norm $\lVert \mathbf{A} \rVert_{2}$ can be estimated with power iterations.

Next, we discuss the optimal choice of the step sizes $\tau$ and $\sigma$.
Let $\mathbf Z \subset \mathbb C^{\lvert J \rvert}$ denote the set of minimizers of \eqref{eq:Sq_Lasso_Numerics}.
For the initial iterate $\mathbf{z}^{(0)}$, we denote the (a priori unknown) distance to $\mathbf Z$ by $\delta = \mathrm{dist}(\mathbf{z}^{(0)},\mathbf Z)$.
Given a target optimality gap $\mathcal J(\mathbf{z}^{(K)}) - \mathcal J(\mathbf Z)\leq \varepsilon$, we initialize $\mathbf q^{(0)}= \mathbf 0$ and choose $\tau = \delta/\lVert \mathbf{A} \rVert_2$, $\sigma = 1/(\delta \lVert \mathbf{A} \rVert_2)$.
Then, performing at most $K(\varepsilon)=\frac{2\delta}{\varepsilon}\lVert \mathbf{A} \rVert_2$ iterations leads to the desired accuracy, see \cite[Prop.\ 4.12]{AdcCol2025}.
Even if we would know $\delta$, the asymptotically expected number of iterations is $K(\varepsilon) \sim \varepsilon^{-1}$.
This makes obtaining highly accurate solutions costly and an improved approach is necessary.

\begin{algorithm}[H]
\caption{Primal-Dual Hybrid Gradient Method (PDHGM) for \rlasso}\label{alg:PDHGM}
\textbf{Input}: Matrix $\mathbf A$, values $\mathbf{y}$, initializations $\mathbf{z}^{(0)}$ and $\mathbf q^{(0)}$, step sizes $\tau$ and $\sigma$, iterations $K$\\
\textbf{Parameters}: Regularization strength $\lambda$
\begin{algorithmic}[1]
\Function{PD-rLASSO}{$\mathbf{z}^{(0)}, \mathbf q^{(0)}, \tau, \sigma, K$}
\State \smash{$\mathbf z_\mathrm{avg}^{(0)} = \mathbf z^{(0)}$, $\mathbf q_\mathrm{avg}^{(0)} = \mathbf q^{(0)}$} 
\ForAll{$k=0,\ldots,K-1$}
\State \smash{$\mathbf z^{(k+1)} = S_{\tau/\lambda}(\mathbf z^{(k)} - \tau \mathbf A^{\top} \mathbf q^{(k)})$}
\State \smash{$\tilde{\mathbf q} = \mathbf q^{(k)} + \sigma \mathbf A(2\mathbf z^{(k+1)} - \mathbf z^{(k)}) - \sigma \mathbf y$}
\State \smash{$\mathbf q^{(k+1)} = \tilde{\mathbf q}/\min(1, \lVert \tilde{\mathbf q} \rVert_{\ell_2})$}
\State \smash{$\mathbf z_\mathrm{avg}^{(k+1)} = (k \mathbf z_\mathrm{avg}^{(k)} + \mathbf z^{(k+1)})/ (k+1)$}
\State \smash{$\mathbf q_\mathrm{avg}^{(k+1)} = (k \mathbf q_\mathrm{avg}^{(k)} + \mathbf q^{(k+1)})/ (k+1)$}
\EndFor
\State \textbf{return} \smash{$\mathbf z_\mathrm{avg}^{(K)}$, $\mathbf q_\mathrm{avg}^{(K)}$}
\EndFunction
\end{algorithmic}
\end{algorithm}

As demonstrated in \cite{AdcCol2025}, restarts can significantly improve the asymptotic convergence behavior if an approximate sharpness condition of the form 
\begin{equation}\label{eq:ApproxSharp}
    \mathrm{dist}(\mathbf z, \mathbf Z) \leq \Bigl(\frac{\mathcal J(\mathbf z) - \mathcal J(\mathbf Z) + \eta}{\alpha}\Bigr)^{\sfrac{1}{\beta}} \qquad \text{for all } \mathbf z \in \mathbb C^{\lvert J\rvert} \text{ such that } \mathcal J(\mathbf z) \leq \mathcal J\bigl(\mathbf z^{(0)}\bigr)
\end{equation}
is satisfied with constants $\eta \geq 0$, $\beta\geq 1$ and $\alpha>0$. 
For $\eta=0$, the condition \eqref{eq:ApproxSharp} coincides with the notion of sharpness introduced in \cite{RouAsp2020}.
It is weaker than other typical conditions (such as strong convexity of $\mathcal J$) that lead to improved rates.
Similar to the Kurdyka-{\L}ojasiewicz condition, \eqref{eq:ApproxSharp} implies that $\mathcal J$ cannot be arbitrarily flat around its minima $\mathbf Z$ (with threshold $\eta$).
As shown in \cite[Subsection 5.3]{AdcCol2025}, the \rlasso \ objective $\mathcal J$ in \eqref{eq:Sq_Lasso_Numerics} satisfies the condition \eqref{eq:ApproxSharp} provided that $\mathbf A$ has the robust NSP.
The latter is established in \Cref{thm:RIP_implies_NSP} assuming the RIP.

\begin{algorithm}[H]
\caption{Restarted PDHGM for \rlasso}\label{alg:rsPDHGM}
\textbf{Input}: Matrix $\mathbf A$, values $\mathbf{y}$, initialization $\mathbf{z}^{(0)}$, number of PDHGM calls $R$\\
\textbf{Parameters}: Regularization strength $\lambda$, sharpness constants $\alpha$ and $\beta$
\begin{algorithmic}[1]
\Function{RS-PD-rLASSO}{$\mathbf{z}^{(0)}, R$}
\State \smash{$\tau=\sigma=1/ \lVert \mathbf{A} \rVert_2$}
\State \smash{$\varepsilon^{(0)} = \mathcal J(\mathbf z_0)$}; \smash{$\mathbf q^{(0)}= \mathbf 0$}
\ForAll{$k=0,\ldots,R-1$}
\State \smash{$\delta^{(k+1)}  = (2\varepsilon^{(k)}/ \alpha)^{1 / \beta}$}
\State \smash{$\varepsilon^{(k+1)} = \varepsilon^{(k)} / \mathbf e$}.
\State \smash{$K^{(k+1)}= \lceil 2\delta^{(k+1)}\lVert A \rVert_2 / \varepsilon^{(k+1)}\rceil$}
\State \smash{$\mathbf z^{(k+1)}, \mathbf q^{(k+1)} = \Call{PD-rLASSO}{\mathbf{z}^{(k)}, \mathbf q^{(k)}, \tau \delta^{(k+1)}, \sigma / \delta^{(k+1)}, K^{(k+1)}}$}
\EndFor
\EndFunction
\State \Return \smash{$\mathbf z^{(R)}$}\label{rLassoRestarts:Return}
\end{algorithmic}
\end{algorithm}

Given $\alpha$ and $\beta$, we can deploy the restart scheme in \Cref{alg:rsPDHGM}.
For $\eta \lesssim \varepsilon$, this leads to an expected number of iterations $K(\varepsilon) \sim \lVert \mathbf{A} \rVert_2\log(\sfrac{1}{\varepsilon})\alpha^{-1}$ for $\beta=1$, and $K(\varepsilon) \sim \lVert \mathbf{A} \rVert_2 \varepsilon^{\sfrac{1}{\beta}-1}\alpha^{-\sfrac{1}{\beta}} $ for $\beta>1$, see \cite[Tab.\ 1]{AdcCol2025}.
Hence, the approximate sharpness condition \eqref{eq:ApproxSharp} enables improved convergence rates until the desired accuracy $\varepsilon$ is in the order of $\eta$.
If the sharpness constants $\alpha$, $\beta$ in \eqref{eq:ApproxSharp} are unknown, we can deploy a line search, see \cite[Alg.\ 2]{AdcCol2025}, which leads to an additional logarithmic factor in the expected number of iterations for $\beta>1$.
To accelerate our implementation, we terminate the PDHGM in \hyperref[rLassoRestarts:Return]{Line \ref*{rLassoRestarts:Return}} of \Cref{alg:rsPDHGM} as soon as the (estimated) optimality gap is less than $\varepsilon$.
This is justified by the fact that $K(\varepsilon)$ was chosen accordingly.
Since the cost of each iteration in \Cref{alg:rsPDHGM} is dominated by the application of $\mathbf A$ and $\mathbf{A}^{\top}$, we get the overall complexity $\mathcal O(m \lvert J\rvert\varepsilon^{\sfrac{1}{\beta}-1}\lVert \mathbf{A} \rVert_2\alpha^{-\sfrac{1}{\beta}}$).
In our experiments, we set $\beta=2$ for the Fourier system and $\beta=3$ for the Chebyshev system.
The parameter $\alpha$ is set to $\alpha = \Vert \mathbf A \Vert_2$ for the Fourier system and $\alpha = 0.3 \Vert \mathbf A \Vert_2$ for the Chebyshev system. 
Moreover, we use $11$ restarts.

\subsubsection{Orthogonal Matching Pursuit}

The Orthogonal Matching Pursuit (\omp) from \Cref{def:OMP} is a greedy algorithm, see \cite{KuRa08,Zha11}, of which a brief pseudocode is given in \Cref{alg:OMP}.
For every update, we add one entry to the set \(S^{(k)}\) of nonzero coefficients, thereby successively increasing the support of the approximation \(\mathbf{z}^{(k)}\).
In \hyperref[OMP:ApproxUpdate]{Line \ref*{OMP:ApproxUpdate}} of the \omp \ algorithm, we have to compute the best least-squares fit with the current support.
To solve the (linear) optimality condition $\mathbf B_k^{\top} \mathbf{B}_k \mathbf{z}^{(k)} = \mathbf{B}_k^{\top} \mathbf{y}$, where $\mathbf B_k \coloneqq \mathbf{A}_{\,:,S^{(k)}}$, we can iteratively update the Cholesky decomposition of $\mathbf{B}_k^{\top} \mathbf{B}_k$ as detailed in \cite{StuChr2012}.
This approach leads to an improved complexity of $\mathcal O(\lvert J \rvert m + mk + k^2)$ per iteration compared to the naive implementation with complexity $\mathcal O(\lvert J \rvert m + m k^2 + k^3)$.
Since we expect $\mathbf B_k$ to be well-conditioned, solving the normal equation is numerically stable.
In principle, we can also solve the least-squares problem with an iterative method.
However, we found this to be efficient only for sufficiently large systems, namely if the iteration count $K$ is large.

Since every step of \Cref{alg:OMP} adds precisely one support entry, the \omp \ reconstruction \(\mathbf{z}^{(K)}\) is always \(K\)-sparse.
While this is a useful property when proving theoretical results, this means that the number of iterations $K$ is strongly related to the support of the reconstruction.
Thus, to ensure a fair comparison, we choose $K$ in the same order as the sparsity of the \rlasso \ reconstruction, which often results in long runtimes for our experiments.

\begin{algorithm}[H]
\caption{\omp}\label{alg:OMP}
\textbf{Input}: Matrix \(\mathbf{A}\), values $\mathbf{y}$, number of iterations $K$
\begin{algorithmic}[1]
\Function{OMP}{$K$}
\State \smash{$S^{(0)} = \emptyset$}
\ForAll{$k=0,\ldots,K-1$}
\State \smash{$ j^{(k+1)} = \operatorname{argmax}_{1 \leq j \leq \lvert J\rvert}\{\lvert({\mathbf{A}}^\ast ({\mathbf{A}}{\mathbf{z}}^{(k)} - \mathbf{y}))_j\rvert\}$}
\State \smash{\(S^{(k+1)} = S^{(k)} \cup \{ j^{(k+1)}\}\)}
\State \smash{${\mathbf{z}}^{(k+1)} = \operatorname{argmin}\{\lVert {\mathbf{A}}\mathbf{z} - {\mathbf{y}} \rVert_{\ell_2}\colon \; \mathbf{z}\in \mathbb{C}^{\lvert J\rvert}, \operatorname{supp}(\mathbf{z}) \subset S^{(k+1)}\}$}\label{OMP:ApproxUpdate}
\EndFor
\EndFunction
\State \textbf{return} \smash{$\mathbf z^{(K)}$}
\end{algorithmic}
\end{algorithm}

\subsubsection{Compressive Sampling Matching Pursuit}

To improve the runtime, we turn to a modification of \omp called Compressive Sampling Matching Pursuit (\cosamp), see \cite{NeeTr09}, a brief pseudocode of which can be found in \Cref{alg:CoSaMP}.
This algorithm also aims to find the most efficient basis elements in every step, but, unlike \omp \ in \Cref{alg:OMP}, it exchanges up to $n$ basis elements per iteration.
To this end, it first adds up to $2n$ indices in each iteration.
Then, it computes the best least-squares fit with this support set and only keeps the $n$ indices with the highest magnitude.
To solve the least-squares problem, we can use an iterative method such as LSQR or LSMR, and initialize it with the relevant part of the previous solution.
Hence, we expect that the solver converges in less than 20 iterations after a few 
outer iterations of \cosamp, in particular if $\mathbf A$ is well-conditioned.
Moreover, $K = \mathcal O(\log(\Vert \mathbf c \Vert_2 / \eta))$ outer iterations suffice to ensure $\Vert \mathbf z^{(K)} -  \mathbf c \Vert_2\leq \max \{\eta, n^{-\sfrac{1}{2}} \sigma_n(\mathbf c)_{\ell_1}\}$, see \cite[Thm.\ A]{NeeTr09}.
Thus, especially for large $n$, \cosamp \ is considerably faster than \omp \ since we only need to perform $K \ll n$ multiplications with the full matrix $\mathbf A$, see also \cite[Tab.\ 2]{NeeTr09}.
We have not proven rigorous theoretical results for \cosamp \ explicitly. However, taking \cite[Thm.\ 6.28]{FoRa13} into account, there is a strong indication for the following \cosamp-version of \Cref{thm:Error_Recovery_Ops} to hold. We denote with $C_{L,\mathbf{X}}(f)$ the $L$th iterate of \cosamp. The result looks slightly different than \Cref{thm:Error_Recovery_Ops}. However, the third term in the error bound is getting small if the number of iterates is getting large. In order to obtain a polynomial decay in $n$ we would need roughly $L = \mathcal{O}(\log n)$ many iterations which shows that \cosamp\ is much faster then \omp~ also in theory. 

\begin{theorem}\label{thm:Error_Recovery_Cosamp}
There exists a universal constants $\alpha, C_B>0$ such that for $n \in \mathbb{N}$ and $J\subset I$ finite with $n<\lvert J\rvert$ the following holds.
If we at least as many sample points \smash{$\mathbf{X}=\{x^1,\ldots,x^m\} \overset{iid}{\sim} \mu$} as required in \eqref{eq:NumberSamples}, then with probability at least $1-\gamma$ we have
\begin{gather}
	\lVert f - C_{L,\mathbf{X}}(f)\rVert_{L_q} \leq C_B n^{\sfrac{1}{2}-\sfrac{1}{q}} \bigl(n^{-\sfrac{1}{2}} \sigma_n(f;\mathcal{B})_{\mathcal{A}_1} + \lVert f-P_Jf\rVert_{L_\infty} + \varrho^L\lVert f\rVert_{L_2}\bigr)
\end{gather}
for any $2\leq q\leq \infty$ and $f \in \mathcal{A}_1$. Here $0<\varrho<1$ is a constant depending only on the RIP constant $\delta_{8n}<0.47$.
\end{theorem}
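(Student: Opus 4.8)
The plan is to follow the proof of \Cref{thm:Error_Recovery_Ops} essentially line by line, replacing the vector recovery guarantee for \omp\ and \rlasso\ (namely \Cref{prop:Recovery_vector_errors}, built on \Cref{prop:OMP_for_RIP}) by the corresponding guarantee for \cosamp. The single genuinely new ingredient is a \cosamp-analogue of \Cref{prop:Recovery_vector_errors}; once this is in place, the passage from coefficient vectors to functions --- the three cases $q=\infty$, $q=2$, and the interpolation for $2<q<\infty$ --- is a verbatim transcription, with the geometric term $\varrho^L\lVert f\rVert_{L_2}$ simply riding along.

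First I would establish the vector-level bound: whenever the matrix $\mathbf A$ from \eqref{eq:matrix} has the RIP with $\delta_{8n}<0.47$, the $L$-th \cosamp\ iterate $C_L(\mathbf y)$ started from $\mathbf 0$ satisfies, for every $\mathbf v\in\C^{|J|}$ and $\mathbf y\in\C^m$,
\[
\lVert \mathbf v - C_L(\mathbf y)\rVert_{\ell_q}\leq \beta\bigl(n^{\sfrac1q-1}\sigma_n(\mathbf v)_{\ell_1}+n^{\sfrac1q-\sfrac12}\lVert \mathbf A\mathbf v-\mathbf y\rVert_{\ell_2}\bigr)+\varrho^L\, n^{\sfrac1q-\sfrac12}\lVert \mathbf v\rVert_{\ell_2}
\]
for $q\in\{1,2\}$, with $\beta,\varrho$ depending only on $\delta_{8n}$. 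For $q=2$ this is read off directly from \cite[Thm.\ 6.28]{FoRa13}, which supplies the contraction $\varrho^L\lVert\mathbf v\rVert_{\ell_2}$ together with the unrecoverable-energy term $n^{-\sfrac12}\sigma_n(\mathbf v)_{\ell_1}+\lVert\mathbf A\mathbf v-\mathbf y\rVert_{\ell_2}$. The $q=1$ instance follows from the $q=2$ one by the same $\ell_2\!\to\!\ell_1$ comparison underlying \Cref{prop:OMP_for_RIP}: since $C_L(\mathbf y)$ is $\mathcal O(n)$-sparse, splitting $\mathbf z\coloneqq\mathbf v-C_L(\mathbf y)$ along $T\coloneqq\supp C_L(\mathbf y)$ and the best $n$-term support $S$ gives $\lVert\mathbf z\rVert_{\ell_1}\lesssim \sigma_n(\mathbf v)_{\ell_1}+\sqrt n\,\lVert\mathbf z\rVert_{\ell_2}$ via Cauchy--Schwarz on $T\cup S$; inserting the $q=2$ bound then distributes the factor $\sqrt n=n^{1-\sfrac12}$ onto \emph{both} the $\ell_2$-noise and the geometric term, producing exactly the displayed estimate.

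The RIP hypothesis $\delta_{8n}<0.47$ is supplied, with probability at least $1-\gamma$, by \Cref{thm:RIP} for the BOS matrix $\mathbf A$ as soon as $m$ obeys \eqref{eq:NumberSamples} --- the order $8n$ merely rescales the admissible constants relative to the orders $2n$ and $26n$ used for \rlasso\ and \omp, so the stated $\alpha$ may be enlarged by an absolute factor. On this event I would then set $\mathbf v=[P_Jf]$ and $\mathbf y=m^{-\sfrac12}f(\mathbf X)$ and repeat the three cases of \Cref{thm:Error_Recovery_Ops}: for $q=\infty$ one uses $\lVert P_Jf-C_{L,\mathbf X}(f)\rVert_{L_\infty}\leq B\lVert[P_Jf]-C_L(\mathbf y)\rVert_{\ell_1}$ with the $q=1$ bound, the identities $\lVert\mathbf A[P_Jf]-\mathbf y\rVert_{\ell_2}\leq\lVert f-P_Jf\rVert_{L_\infty}$ and $\lVert[P_Jf]\rVert_{\ell_2}=\lVert P_Jf\rVert_{L_2}\leq\lVert f\rVert_{L_2}$, and adds the truncation error, producing the new contribution $n^{\sfrac12}\varrho^L\lVert f\rVert_{L_2}$; for $q=2$ one argues identically via Parseval, picking up $\varrho^L\lVert f\rVert_{L_2}$. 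Writing $E\coloneqq n^{-\sfrac12}\sigma_n(f;\mathcal B)_{\mathcal A_1}+\lVert f-P_Jf\rVert_{L_\infty}+\varrho^L\lVert f\rVert_{L_2}$, the endpoints read $\lVert f-C_{L,\mathbf X}(f)\rVert_{L_2}\lesssim E$ and $\lVert f-C_{L,\mathbf X}(f)\rVert_{L_\infty}\lesssim n^{\sfrac12}E$, so H\"older's inequality with $\theta=1-\sfrac2q$ yields the intermediate range $2<q<\infty$ exactly as in \Cref{thm:Error_Recovery_Ops}, the common bracket $E$ carrying the geometric term through unchanged.

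The main obstacle is the first step: pinning down the precise form of \cite[Thm.\ 6.28]{FoRa13}, in particular that its contraction factor $\varrho\in(0,1)$ and stability constant are governed solely by $\delta_{8n}<0.47$, and checking that the $\ell_2\!\to\!\ell_1$ conversion weights the geometric term $\varrho^L\lVert\mathbf v\rVert_{\ell_2}$ with the \emph{same} power $n^{\sfrac1q-\sfrac12}$ as the noise term rather than the power $n^{\sfrac1q-1}$ attached to $\sigma_n$. Everything downstream of this vector-level estimate is a direct re-use of the already-proven \Cref{thm:Error_Recovery_Ops}, which is why the resulting bound differs from \eqref{eq:ErrorEstimateRIP} only by the additive $\varrho^L\lVert f\rVert_{L_2}$ and vanishes as $L\to\infty$.
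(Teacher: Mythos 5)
Correct, and this is precisely the route the paper intends: the paper itself gives no proof of \Cref{thm:Error_Recovery_Cosamp} (it explicitly states that rigorous results for \cosamp\ were not proven and only points to \cite[Thm.\ 6.28]{FoRa13}), and your argument --- a \cosamp-analogue of \Cref{prop:Recovery_vector_errors} extracted from \cite[Thm.\ 6.28]{FoRa13} under $\delta_{8n}<0.47$, followed by a verbatim rerun of the proof of \Cref{thm:Error_Recovery_Ops} with $\mathbf{v}=[P_Jf]$, $\mathbf{y}=m^{-\sfrac{1}{2}}f(\mathbf{X})$, using $\lVert[P_Jf]\rVert_{\ell_2}\leq\lVert f\rVert_{L_2}$ and H\"older interpolation --- is the natural formalization of that remark. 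Note also that \cite[Thm.\ 6.28]{FoRa13} is in fact already stated for all exponents $p\in[1,2]$ with the geometric term weighted by $s^{\sfrac{1}{p}-\sfrac{1}{2}}$, so your $\ell_2\to\ell_1$ conversion (whose key point, correctly identified, is that $\varrho^L\lVert\mathbf{v}\rVert_{\ell_2}$ must carry the factor $n^{\sfrac{1}{q}-\sfrac{1}{2}}$ rather than $n^{\sfrac{1}{q}-1}$) merely re-derives what you could cite directly.
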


\begin{algorithm}[H]
\caption{\cosamp}\label{alg:CoSaMP}
\textbf{Input}: Matrix \(\mathbf{A}\), values $\mathbf{y}$, number of iterations $K$, sparsity \(n\)
\begin{algorithmic}[1]
\Function{CoSaMP}{$K$}
\State $ \mathbf{z}^{(0)} = \mathbf 0$
\ForAll{$k=0,\ldots,K-1$}
\State \smash{\(S^{(k+1)} = \operatorname{supp}(\mathbf{z}^{(k)}) \cup \operatorname{argmax}\{\lvert({\mathbf{A}}^\ast ({\mathbf{A}}{\mathbf{z}}^{(k)} - \mathbf{y}))_L\rvert\colon \; L \subset J, \lvert L \rvert \leq 2n\}\)}
\State \smash{${\mathbf{u}}^{(k+1)} = \operatorname{argmin}\{\lVert {\mathbf{A}}\mathbf{z} - {\mathbf{y}} \rVert_{\ell_2}\colon \; \mathbf{z}\in \mathbb{C}^{\lvert J\rvert}, \operatorname{supp}(\mathbf{z}) \subset S^{(k+1)}\}$}
\State \smash{\( S^{(k+1)}  = \operatorname{argmax} \bigl\{ \bigl\lvert \mathbf{u}_L^{(k+1)} \bigr\rvert\colon \; L \subset J, \lvert L \rvert \leq n \bigr\} \)}
\State \smash{\( \mathbf{z}^{(k+1)} = \mathbf{u}^{(k+1)}_{S^{(k+1)} } \)}
\EndFor
\EndFunction
\State \textbf{return} \smash{$\mathbf z^{(K)}$}
\end{algorithmic}
\end{algorithm}

\subsection{Function recovery}
Now, we are ready to perform recovery experiments for several functions from the literature.
Even for moderate dimensions $d$, we already suffer from the problem that a grid-based search space $J$ grows exponentially with the dimension $d$.
Hence, we restrict ourselves to smaller search spaces such as a hyperbolic cross \cite{DTU18}.
Clearly, other restrictions are also possible.
Alternatively, to avoid the search space $V_J$, one can also optimize over the atoms directly, see \cite{HerNeu2025} for details.

\subsubsection{The multivariate Fourier system}
First, we discuss approximation for the Fourier system, which was treated as special case in \Cref{sec:Fourier}.
Given $m$ uniformly distributed sampling points $\mathbf X \subset \mathbb{T}^d$, the values $\mathbf y = f(\mathbf X)$ at these positions, and a search space $J \subset \mathbb Z^{d}$, we aim to reconstruct the corresponding Fourier coefficients of $f$ on $J$ with both \rlasso \ and \omp. We use the hyperbolic cross $J_{s} \coloneqq \{\mathbf{k}\in\mathbb{Z}^d\colon \; \Pi(\mathbf{k})\leq s\}$ with the radius $s\in \mathbb{N}$, where
\begin{equation}
    \Pi(\mathbf{k}) \coloneqq \prod\limits_{i=1}^d (1+\lvert k_i\rvert),
\end{equation}
as search space, which has asymptotical cardinality $\vert J_s \vert \asymp s(\log s)^{d-1}$, see \cite{KSU15}.
For small $s>1$ there exist preasymptotic bounds such as $\vert J_s \vert \leq e^2s^{2+\log d}$, see \cite[Proof of Thm.\ 4.9]{KSU15}.
For \rlasso, we specify the regularization parameter for each experiment.
Regarding \omp, we choose the number of steps as $K = \min(\vert J \vert, m, 20000)$.
Taking the minimum is justified as follows.
Since each step of \Cref{alg:OMP} adds one support point, it does not make sense to perform more than $\vert J \vert$ iterations.
Moreover, we do not want to end up with an underdetermined least squares problem.
Thus, we perform at most $m$ steps.
\paragraph{Example 1, isotropic decay of coefficients.}
Following \cite[Sec.\ 10]{KamUllVol2021}, we consider the 1-dimensional function $g\colon \mathbb{T} \to \mathbb{R}$ given by
\begin{equation}\label{eq:Fourier_Approx_Basic_Func}
    g(x) = \frac{15}{4\sqrt{3}} \cdot 5^{\sfrac{3}{4}} \max\bigl\{0.2 - (x\bmod 1 -0.5)^2, 0\bigr\}.
\end{equation}
Its Fourier coefficients are $\hat{g}_0 = 3^{\sfrac{1}{2}}\cdot 5^{\sfrac{1}{4}}$ and
\begin{equation}\label{eq:Fourier_Approx_Basic_Coeffs}
    \hat{g}_{k} = \frac{5^{\sfrac{5}{4}}\sqrt{3}}{8} \, (-1)^{k} \, \frac{\sqrt{5} \, \sin\left(2k\pi/\sqrt{5}\right) - 2 k \pi \cos\left(2k\pi/\sqrt{5}\right)}{\pi^3 k^3}, \qquad k\neq 0.
\end{equation}
To get an example in dimension $d$, we consider the tensor product $f\colon \mathbb{T}^d\rightarrow\R$ with $f(\mathbf{x}) = \prod_{i=1}^{d} g(x_i)$.
The corresponding Fourier coefficients are $\hat{f}_{\mathbf{k}} = \prod_{i=1}^{d} \hat{g}_{k_i}$ for any $\mathbf{k} \in \Z^d$.
With an elementary calculation, one can show that $\lVert f \rVert_{L_2} = 1$, which in particular means that the coefficients satisfy $\Vert \hat f \Vert_{\ell_2} = 1$. 

Below, we investigate the theoretical quantities appearing in \Cref{thm:Error_Recovery_Ops}.
For the Fourier coefficients $\hat{f}_k$, we know that
\begin{equation}\label{eq:Fourierdecay}
    \bigl\lvert\hat{f}_{\mathbf{k}}\bigr\rvert \lesssim \Pi(\mathbf{k})^{-2}\eqqcolon a_{\mathbf{k}}\,,\quad \mathbf{k} \in \mathbb{Z}^d.
\end{equation}
Thus, using \cite[Formula (2.3.2)]{DTU18}, we directly get
\begin{equation}\label{eq:L2_trunc}
    \lVert f-P_{J_s}f\rVert_{L_2} \lesssim \biggl(\sum_{ \mathbf{k}\colon \Pi(\mathbf{k})>s} \Pi(\mathbf{k})^{-4}\biggr)^{\sfrac{1}{2}} \asymp s^{-\sfrac{3}{2}}(\log s)^{\sfrac{(d\!\!-\!\!1)}{2}} \asymp \lvert J_s \rvert^{-\sfrac{3}{2}}(\log \lvert J_s\rvert)^{2(d-1)}.
\end{equation}
Similarly, we obtain for the $L_\infty$-truncation error that
\begin{equation}
\label{eq:L_infty}
    \lVert f-P_{J_s}f\rVert_{L_\infty} \lesssim \sum\limits_{ \mathbf{k}\colon \Pi(\mathbf{k})>s} \Pi(\mathbf{k})^{-2} \asymp s^{-1}(\log s)^{d-1} \asymp \lvert J_s \rvert^{-1}(\log \lvert J_s\rvert)^{2(d-1)}.
\end{equation}
Further, we can obtain estimates for best $n$-term approximation of $f$ based on \eqref{eq:Fourierdecay}.
To this end, we non-increasingly rearrange the multi-indexed sequence $(a_{\mathbf{k}})_{\mathbf{k}}$ as 
\begin{equation}\label{eq:rearr}
    a^*_n \asymp n^{-2}(\log n)^{2(d-1)},\quad n\in \mathbb{N},
\end{equation}
see \cite[Sects.\ 2 and 3]{KSU15}. Clearly, we have for any $q>0$ that $\sigma_n(f;\mathcal{B})_{\mathcal{A}_q} \lesssim (\sum_{k>n} \lvert a^*_k\rvert^q)^{\sfrac{1}{q}}$.
Using \eqref{eq:rearr}, we thus get 
\begin{equation}\label{eq:bestmterm}
    \sigma_n(f;\mathcal{B})_{L_2} \lesssim \biggl(\sum\limits_{k>n} \lvert a^*_k\rvert^2 \biggr)^{\sfrac{1}{2}} \asymp n^{-\sfrac{3}{2}}(\log n)^{2(d-1)}
\end{equation}
and
\begin{equation}\label{eq:bestmterm2}
    n^{-\sfrac{1}{2}}\sigma_n(f;\mathcal{B})_{\mathcal{A}_1}\lesssim n^{-\sfrac{1}{2}} \sum\limits_{k>n} \lvert a^*_k\rvert \asymp n^{-\sfrac{3}{2}}(\log n)^{2(d-1)}.
\end{equation}

Hence, by choosing $s \asymp n^\kappa$ with $\kappa>1$, we can ensure that the bound in \eqref{eq:L_infty} has the same order as \eqref{eq:bestmterm2}.
Then, \Cref{thm:Error_Recovery_Ops} implies that
\begin{equation}
	\lVert f - H_{\mathbf{X}}(f)\rVert_{L_2} \lesssim n^{-\sfrac{1}{2}}\sigma_n(f;\mathcal{B})_{\mathcal{A}_1} + \lVert f-P_{J_s}f\rVert_{L_\infty} \lesssim n^{-\sfrac{3}{2}}(\log n)^{2(d-1)},
\end{equation}
where $H_{\mathbf{X}}$ is either the \rlasso \ or the \omp \ decoder.
Both use $m = \lvert\mathbf{X}\rvert \asymp n(\log n)^3$ many samples.
The approximation results for $d=5$ and different choices of $m$ and sizes $\lvert J_s\rvert$  of the hyperbolic cross are given in \Cref{tab:func1_OMP}.
For \rlasso, we use the parameter $\lambda = \sqrt{m}$.
To compute the errors, we use the fact that
\begin{equation}\label{eq:ErrorSplitting}
    \lVert f - H_{\mathbf{X}}(f) \rVert^2_{L_2} = \sum_{\mathbf{k} \in J_s} \bigl\lvert [f]_{\mathbf{k}} - [H_{\mathbf{X}}(f)]_{\mathbf{k}} \bigr\rvert ^2 + \sum_{\mathbf{k} \notin J_s}  \bigl\lvert [f]_{\mathbf{k}}\bigr\rvert^2,
\end{equation}
namely that we do not require any approximation of the norm.
The second summand in \eqref{eq:ErrorSplitting} is provided in \Cref{tab:func1_OMP} as truncation error and acts as a theoretical lower bound.
As expected, the errors decay and eventually approach the truncation error if we provide more samples.
However, as we sample from the non-truncated function $f$ (which can be interpreted as noise), we cannot expect perfect recovery of the truncated coefficients.

A corresponding plot of the errors for the \rlasso \ and \omp \ with $\lvert J_s \rvert = 1M$ is given in \Cref{fig:Fkt1rLASSO}.
There, we also includes the best $n$-term approximation of the sequence \eqref{eq:rearr} in $\ell_2$ together with the asymptotic bound in \eqref{eq:bestmterm} as benchmark.
The best $n$-term error matches the predicted asymptotic rates.
For more than $m=300k$ samples, \omp \ saturates due to the limited number of steps.
In this high accuracy regime, \rlasso \ appears to be the superior choice.

\begin{figure}[H]
    \centering
    \captionof{table}{Example 1 with \rlasso \ (top) and \omp \ (bottom): The frequencies form a hyperbolic cross and the samples are drawn uniformly from $[0,1]^5$.
    The errors are computed using the ground-truth Fourier coefficients. All values are up to sampling randomness.}\label{tab:func1_OMP}
    \tabcolsep=0.15cm
    \begin{tabular}{c|ccccccc|c}
        \toprule
        $\lvert J \rvert$ | $m$ & 1K & 5K & 10K & 50K & 100K & 300K & 600K & trunc.\ error\\
        \midrule
        4.5K  & \num{2.34e-1} & \num{8.23e-2} & \num{6.34e-2} & \num{4.57e-2} & \num{4.29e-2} & \num{3.89e-2} & \num{3.76e-2} &  \num{ 3.43e-2}\\
        12K  & \num{3.02e-1} & \num{7.36e-2} & \num{5.07e-2}  & \num{3.30e-2} & \num{2.94e-2} & \num{2.61e-2} & \num{2.47e-2} &  \num{2.12e-2}\\
        54K  & \num{4.17e-1} & \num{8.80e-2} & \num{4.37e-2}  & \num{1.28e-2} & \num{9.91e-3}  & \num{7.78e-3}  & \num{7.05e-3}  & \num{5.28e-3}\\
        104K & \num{4.12e-1} & \num{1.02e-1} & \num{5.03e-2} & \num{1.07e-2} & \num{7.17e-3}  & \num{5.05e-3}  & \num{4.42e-3}  &  \num{3.03e-3}\\
        311K & \num{5.32e-1} & \num{1.27e-1} & \num{6.36e-2} & \num{1.20e-2} & \num{5.70e-3}  & \num{2.74e-3}  & \num{2.18e-3}  &\num{1.16e-3}\\
        1M  & \num{6.71e-1} & \num{2.19e-1} & \num{8.03e-2}  & \num{1.61e-2} & \num{7.41e-3}  & \num{2.09e-3}  & \num{1.19e-3}  &\num{3.77e-4}\\
        \bottomrule
    \end{tabular}
    
    \vspace{3mm}
    \tabcolsep=0.15cm
    \begin{tabular}{c|ccccccc|c}
        \toprule
        $\lvert J \rvert$ | $m$ & 1K & 5K & 10K & 50K & 100K & 300K & 600K & trunc.\ error\\
        \midrule
        4.5K & \num{2.56e-1} & \num{7.86e-2} & \num{5.57e-2} & \num{4.19e-2} & \num{3.94e-2} & \num{3.72e-2} & \num{3.63e-2} & \num{ 3.43e-2}\\
        12K & \num{2.83e-1} & \num{7.83e-2} & \num{5.68e-2} & \num{2.99e-2} & \num{2.71e-2} & \num{2.44e-2} & \num{2.35e-2}& \num{2.12e-2}\\
        54K & \num{3.11e-1} & \num{8.09e-2} & \num{4.30e-2} & \num{1.17e-2} & \num{9.75e-3} & \num{8.95e-3} & \num{8.78e-3} & \num{5.28e-3} \\
        104K& \num{3.25e-1} & \num{8.31e-2} & \num{4.53e-2} & \num{1.02e-2} & \num{8.36e-3} & \num{7.81e-3} & \num{7.71e-3}& \num{3.03e-3}\\
        311K & \num{4.25e-1} & \num{9.39e-2} & \num{5.24e-2} & \num{1.06e-2} & \num{5.77e-3} & \num{5.03e-3} & \num{4.92e-3} & \num{1.16e-3}\\
        1M & \num{4.29e-1} & \num{9.86e-2} & \num{5.44e-2} & \num{1.24e-2}& \num{5.92e-3} & \num{2.97e-3} & \num{2.85e-3} &\num{3.77e-4}\\
        \bottomrule
    \end{tabular}


\captionof{figure}{Example 1: Approximation error depending on the number of samples for the \rlasso \ and \omp \ algorithm with the maximally sized $J$ ($\vert J \vert = 1M$).
    The plot is in log scale.}
    \label{fig:Fkt1rLASSO}

\vspace{2mm}

\begin{tikzpicture}
\begin{axis}[
    width=12cm,
    height=8cm,
    xlabel={Number of samples $m$ or terms $n$.},
    ylabel={$L_2$-Error},
    xmode=log,
    ymode=log,
    log basis x=10,
    log basis y=10,
    grid=both,
    grid style={dotted},
    mark=*,
]
\addplot coordinates {
    (1000,     0.7591481804847717 + 0.0002149017)
    (2000,     0.5952785015106201 + 0.0002149017)
    (4000,     0.4069393277168274 + 0.0002149017)
    (8000,     0.1459283679723739 + 0.0002149017)
    (16000,    0.0578027740120887 + 0.0002149017)
    (32000,    0.0284349527209997 + 0.0002149017)
    (64000,    0.0136513393372297 + 0.0002149017)
    (128000,   0.0064156851731240 + 0.0002149017)
    (256000,   0.0026716541033238 + 0.0002149017)
    (512000,   0.0010794417466968 + 0.0002149017)
    (1024000,  0.0003579959156922 + 0.0002149017)
    (2048000,  0.0001886745594674 + 0.0002149017)
}; 
\addlegendentry{\rlasso~approximant}

\addplot coordinates {
    (1000,     5.142367e-01)
    (2000,     2.513504e-01)
    (4000,     1.345149e-01)
    (8000,     7.668998e-02)
    (16000,    4.383527e-02)
    (32000,    2.254496e-02)
    (64000,    1.147291e-02)
    (128000,   5.010347e-03)
    (256000,   2.675015e-03)
    (512000,   2.470930e-03)
    (1024000,  2.407902e-03)
    (2048000,  2.379027e-03)
}; 
\addlegendentry{\omp~approximant}

\addplot[gray, mark=triangle*]coordinates {
    (1000,     0.062187109143 + 0.0000121849)
    (2000,     0.035435069352 + 0.0000121849)
    (4000,     0.020855875686 + 0.0000121849)
    (8000,     0.011940010823 + 0.0000121849)
    (16000,    0.006612705998 + 0.0000121849)
    (32000,    0.003610250074 + 0.0000121849)
    (64000,    0.001916919369 + 0.0000121849)
    (128000,   0.001008824562 + 0.0000121849)
    (256000,   0.000525453012 + 0.0000121849)
    (512000,   0.000268026110 + 0.0000121849)
    (1024000,  0.000134955146 + 0.0000121849)
    (2048000,  0.000066431465 + 0.0000121849)
}; 
\addlegendentry{best $n$-term}


\addplot[
    domain=1000:2048000,
    thick, no marks,
]
{1.2e-4 * x^(-1.5) * ln(x)^8};
\addlegendentry{$x^{-{\sfrac{3}{2}}} \log(x)^8$}
\end{axis}
\end{tikzpicture}
\end{figure}

\begin{remark}[Least squares versus \rlasso \ and \omp]\label{rem:lsqr}
It is well-known that $f$ can be recovered using a (weighted) least-squares decoder with respect to a hyperbolic cross $J_s$ provided that the number of samples $m$ is chosen such that $m = \mathcal{O}(\lvert J_s \rvert \log(\lvert J_s \rvert))$, see \cite{CoMi16, KamUllVol2021,KrUl21}.
The error is then given by $\|f-P_{J_s}f\|_{L_2}$ in expectation for this individual $f$, see \cite[Sect.\ 6]{KamUllVol2021}, which, of course, is optimal with respect to the fixed dictionary $\mathcal{B}$. 
In addition, the number of samples scales better than our bound \eqref{eq:NumberSamples} and can be even reduced to $O(|J_s|)$ using algorithmic subsampling \cite{BSU23}.
Thus, one might wonder why we care about nonlinear decoders.
The answer is that the optimal search space for $L_2$-approximation is, of course, barely known in advance (at least in practice) and might differ significantly from $J_s$.
Our nonlinear decoders do not require this knowledge and instead rely on the sparsity level $n$ and a control of the truncation error $\|f-P_{J}f\|_{L_\infty}$, see \Cref{rem:FuncClass}.

Sometimes, we can also infer reasonable search spaces from regularity assumptions.
The bounds \eqref{eq:L2_trunc} and \eqref{eq:L_infty} can be deduced from the fact that $f$ has mixed Besov regularity, i.e., $f \in \mathbf{B}^{2}_{1,\infty}(\mathbb{T}^d)$, see \cite[Chapt.\ 3]{DTU18}.
Then, the hyperbolic cross $J_s$ is the optimal search space, see \cite[Thm.\ 4.2.6]{DTU18}. 
Moreover, results in \cite{DKU23, KPUU24} show that we may recover $f$ using only $m = \mathcal{O}(\lvert J_s\rvert)$ many samples from a deterministically subsampled random point cloud with a weighted least squares decoder.
The $L_2$-error of this decoder is of order $\mathcal{O}(\lvert J\rvert^{-\sfrac{3}{2}}(\log \lvert J\rvert)^{2(d-1)})$, which is asymptotically optimal with respect to this class and slightly outperforms \rlasso \ and \omp \ in terms of the sample complexity.
\end{remark}

\paragraph{Example 2, anisotropic decay of coefficients.}
We proceed to an example where the isotropic hyperbolic cross $J_{s}$ does not represent the optimal search space anymore.
Inspired by \cite[Sec.\ 5.3]{KamPotVol2021}, we consider the function $f \colon \mathbb{T}^7 \to \R$ with
\begin{equation}
    f(\mathbf x) = \frac{1}{\sqrt{2 + 2 C_2^3 C_4^4}} \biggl(\prod_{i \in \{1,3,7\}} N_2(x_i) + \prod_{i \in \{2,4,5,6\}} N_4(x_i)\biggr),
\end{equation}
where
\begin{equation}\label{eq:FourierBSpline}
    N_\ell(x) = C_\ell \sum_{k \in \Z} \mathrm{sinc}(\tfrac{\pi}{\ell}k)^\ell \cos(\pi k) \exp(2 \pi i k x)
\end{equation}
is the (normalized) B-spline of order $\ell$ with $\Vert N_\ell \Vert_{L_2} = 1$.
With the above normalization, we obtain $\Vert f \Vert_{L_2} = 1$.
Compared to the original function in \cite{KamPotVol2021}, we removed the highest-order term involving $N_6$ (which decays very fast) and reduced the dimension to $d=7$.
 
From the univariate series expansion \eqref{eq:FourierBSpline}, we can readily infer the Fourier coefficients of $f$. We observe an anisotropic decay, namely for $\mathbf{k}\in \mathbb{Z}^7$, we have that
\begin{equation}
    |\hat{f}_{\mathbf{k}}| \lesssim \prod\limits_{i=1}^7 (1+|k_i|)^{-r_i} \eqcolon a_{\mathbf{k}}, \qquad r_i = \begin{cases}
            2& i \in \{1,3,7\},\\
            4& i \in \{2,4,5,6\}.
    \end{cases}
\end{equation}
Using the hyperbolic cross $J_s$, we obtain truncation errors analogously to \eqref{eq:L2_trunc} and \eqref{eq:L_infty}.
The striking advantage over Example 1 shows up when estimating the best $n$-term approximation error $n^{-\sfrac{1}{2}} \sigma_n(f;\mathcal{B})_{\mathcal{A}_1}$.
In contrast to \eqref{eq:rearr}, the non-increasing rearrangement decays way faster.
In fact, using \cite[Thm.\ 3.4]{KSU21} with $\mathbf{q}= (1,...,1)$ shows that
\begin{equation}\label{eq:CoeffsFun2}
    a_n^{\ast} \lesssim n^{-2}\log^{4} n,\qquad n\in \mathbb{N}.
\end{equation}
This in turn implies that
\begin{equation}
    n^{-\sfrac{1}{2}} \sigma_n(f;\mathcal{B})_{\mathcal{A}_1} \lesssim
    n^{-\sfrac{1}{2}} \sum\limits_{k>n}|a_k^{\ast}| \lesssim n^{-3/2}\log^{4} n.
\end{equation}

Hence, our theory yields an even better behavior compared to Example 1 when using the~\rlasso\\ or~\omp~decoder with $m\approx n(\log n)^3$ many samples.
For \rlasso, we use the parameter $\lambda = 2\sqrt{m}$.
Numerical approximation results for several combinations of $m$ and $J_s$ are given in \Cref{tab:func4}.
As before, we indicate the truncation error on the right and use the ground-truth Fourier coefficients to compute the error.
The behavior with respect to $m$ and $\vert J_s \vert$ is similar to our first example.

In \Cref{fig:Fkt2rLASSO}, we plot the errors for the \rlasso \ and \omp \ decoders with $\vert J_s \vert = 11M$.
We also include the best $n$-term approximation of the sequence \eqref{eq:rearr} in $\ell_2$ together with the asymptotic bound in \eqref{eq:CoeffsFun2} as benchmark.
Again, the best $n$-term error matches the predicted asymptotic rates although the pre-asympttoic effects are much stronger here.
For this example, \rlasso \ struggles for small numbers of samples $m$, which is also visible in \Cref{tab:func4}.
For larger $m$, the two decoders behave relatively similar.

\begin{figure}[H]
    \centering
    \captionof{table}{Example 2 with \rlasso \ (top) and \omp \ (bottom): The frequencies form a hyperbolic cross and the samples are drawn uniformly from $[0,1]^7$.
    All values are up to sampling randomness.
    }\label{tab:func4}
       
    \tabcolsep=0.15cm
    \begin{tabular}{c|ccccccc|c}
    \toprule
    $\lvert J \rvert$ | $m$ & 1K & 5K & 10K & 50K & 100K & 300K & 600K & trunc.\ error\\
    \midrule
    63K   & \num{2.51e-1} & \num{3.80e-2} & \num{2.96e-2} & \num{2.28e-2} & \num{2.10e-2} & \num{1.96e-2} & \num{1.88e-2} & \num{1.73e-2}\\
    198K  & \num{3.05e-1} & \num{3.57e-2} & \num{2.14e-2} & \num{1.56e-2} & \num{1.41e-2} & \num{1.26e-2} & \num{1.22e-2} & \num{1.07e-2}\\
    1.0M  & \num{5.31e-1} & \num{3.73e-2} & \num{1.13e-2} & \num{4.77e-3} & \num{4.10e-3} & \num{3.62e-3} & \num{3.43e-3} & \num{2.88e-3}\\
    2.1M  & \num{6.28e-1} & \num{3.35e-2} & \num{9.72e-3} & \num{2.98e-3} & \num{2.40e-3} & \num{2.08e-3} & \num{1.94e-03} & \num{1.56e-3}\\
    7.4M  & \num{6.65e-1} & \num{6.39e-2} & \num{1.25e-2} & \num{1.48e-3} & \num{1.08e-3} & \num{8.36e-4} & \num{7.54e-04} & \num{5.78e-4}\\
    \bottomrule
\end{tabular}

    \vspace{3mm}
       
    \tabcolsep=0.15cm
    \begin{tabular}{c|ccccccc|c}
    \toprule
    $\lvert J \rvert$ | $m$ & 1K & 5K & 10K & 50K & 100K & 300K & 600K & trunc.\ error\\
    \midrule
    63K   & \num{2.26e-1} & \num{4.31e-2} & \num{4.28e-2} & \num{4.40e-2} & \num{3.32e-2} & \num{2.51e-2} & \num{2.25e-2} & \num{1.73e-2}\\
    198K  & \num{2.68e-1} & \num{2.91e-2} & \num{2.66e-2} & \num{2.69e-2} & \num{2.30e-2} & \num{1.73e-2} & \num{1.52e-2} & \num{1.07e-2}\\
    1.0M  & \num{2.89e-1} & \num{1.13e-2} & \num{7.85e-3} & \num{6.78e-3} & \num{6.41e-3} & \num{5.07e-3} & \num{4.37e-3} & \num{2.88e-3}\\
    2.1M  & \num{3.37e-1} & \num{9.81e-3} & \num{4.71e-3} & \num{3.64e-3} & \num{3.53e-3} & \num{2.81e-3} & \num{2.43e-3} & \num{1.56e-3}\\
    7.4M  & \num{3.52e-1} & \num{9.79e-3} & \num{3.38e-3} & \num{1.38e-3} & \num{1.32e-3} & \num{1.08e-3} & \num{9.17e-4} & \num{5.78e-4}\\
    \bottomrule
\end{tabular}
    
    \captionof{figure}{Example 2: Approximation error depending on the number of samples for the \rlasso \ and \omp \ algorithm with a large search space ($\lvert J \rvert = 11M$).
    The plot is in log scale.}
    \label{fig:Fkt2rLASSO}

    \vspace{2mm}
    
\begin{tikzpicture}
\begin{axis}[
    width=12cm,
    height=8cm,
    xlabel={Number of samples $m$ or terms $n$.},
    ylabel={$L_2$-Error},
    xmode=log,
    ymode=log,
    log basis x=10,
    log basis y=10,
    grid=both,
    grid style={dotted},
    mark=*,
]
\addplot coordinates {
    (500,      8.547153e-01)
    (1000,     7.031402e-01)
    (2000,     4.600763e-01)
    (4000,     1.700945e-01)
    (8000,     2.764872e-02)
    (16000,    5.859575e-03)
    (32000,    2.438509e-03)
    (64000,    1.039618e-03)
};
\addlegendentry{\rlasso~approximant}

\addplot coordinates {
    (500,      9.244099e-01)
    (1000,     3.132426e-01)
    (2000,     7.191363e-02)
    (4000,     1.287952e-02)
    (8000,     6.509291e-03)
    (16000,    2.076376e-03)
    (32000,    1.044765e-03)
    (64000,    9.164499e-04)
};
\addlegendentry{\omp~approximant}

\addplot[gray, mark=triangle*]coordinates {
     (500,      0.032380435616 + 0.00012723915212107113)
     (1000,     0.008383532986 + 0.00012723915212107113)
     (2000,     0.002924559172 + 0.00012723915212107113)
     (4000,     0.000990046072 + 0.00012723915212107113)
     (8000,     0.000304284971 + 0.00012723915212107113)
     (16000,    0.000057072208 + 0.00012723915212107113)
};
 \addlegendentry{best $n$-term}

\addplot[
    domain=500:32000,
    thick, no marks,
]
{3e-2 * x^(-1.5) * ln(x)^4};
\addlegendentry{$x^{-{\sfrac{3}{2}}}\log(x)^4$}
\end{axis}
\end{tikzpicture}
\end{figure}

\begin{remark}[Least squares]
    To get a similar decay with the least squares decoder, one has to incorporate knowledge about the optimal linear subspaces for  approximation in $L_2$, which is determined by an anisotropic hyperbolic cross index set, see \cite[Sect.\ 2.3]{KSU21}, given by
    \begin{equation}
        J_{\mathbf{r},s} \coloneqq \Big\{\mathbf{k}\in \Z^7\colon \; \prod\limits_{i=1}^7 (1+|k_i|)^{r_i}\leq s\Big\}.
    \end{equation}
    Note, that the usual isotropic hyperbolic cross index set $J_s$  would lead to a non-optimal bound.
    Neither \rlasso~nor \omp~uses such an a priori information.
\end{remark}

\subsubsection{The tensorized Chebyshev system}
Now, we consider functions in $L_2([-1,1]^d, {\omega}_d)$, where $\omega_d$ is the $d$-dimensional normalized Chebyshev measure
\begin{equation}\label{prob_measure_Cheb}
\omega_d(\mathbf x) = \prod_{j=1}^d \pi^{-1}(1-x_j^2)^{-\sfrac12}.
\end{equation}
An orthonormal basis of $L_2([-1,1], {\omega})$ is given by the Chebyshev polynomials
\begin{equation}
    T_k(x) = \sqrt{2}^{\min(1, k)}
 \cos (k \arccos x), \quad k \in \N.
\end{equation}
These can be equivalently characterized as eigenfunctions of the Sturm-Liouville problem
\begin{equation}
    \left( \sqrt{1-x^2} T'_k(x) \right)' + \frac{k^2}{\sqrt{1-x^2}} T_k(x) = 0.
\end{equation}
Further, an orthonormal basis of $L_2([-1,1]^d, {\omega}_d)$ is given by the tensor product functions
\begin{equation}
    T_{\mathbf{k}} (\mathbf x) = \prod_{j=1}^{d} T_{k_j} (x_j)= \prod_{j=1}^{d} \left( \sqrt{2}^{\min(1, k_j)}
 \cos (k_j \arccos x_j) \right), \quad k \in \mathbb{N}^d.
\end{equation}
A detailed survey on the Chebyshev polynomials can be found in the monograph \cite{MasonHandscomb2003}.
Note that due to the density $\omega_d$, we also need to sample $\mathbf X \sim \omega_d$.
As search space, we again use the (restricted) hyperbolic cross $J_{s} \coloneqq \{\mathbf{k}\in\mathbb{N}^d\colon \; \Pi(\mathbf{k})\leq s\}$.
For \rlasso, $\lambda$ is specified for each function.
With the same reasoning as for the Fourier basis, we set $K=\min(m, \vert J \vert, 20000)$ for \omp.
For \cosamp \ we set $s = \min(m/4, \vert J \vert, 20000)$ in order to ensure that the least squares problem remains sufficiently well-posed.

\paragraph{Example 3, a non-periodic function.}

Following the authors of \cite{BarLutNag2023}, we approximate the function $f \colon [-1,1]^d \to \R$ with $f(\mathbf{x}) = \prod_{j = 1}^d g(x_j)$, where
\begin{equation}
    g(x) = \sqrt{\frac{1024\pi}{367\pi-256}}\begin{cases}
        -\frac{x^2}{4} - \frac{x}{2} + \frac{1}{2} & -1 \leq x \leq 0 \\
        \frac{x^2}{8} - \frac{x}{2} + \frac{1}{2} & 0 \leq x \leq 1\\
    \end{cases}.
\end{equation}
Using the substitution $x = \sin(\theta)$, we obtain (with a lengthy but elementary derivation) that $\Vert f \Vert^2_{L_2([-1,1]^d,\omega_d)} = 1$.
With another lengthy from calculation \cite{BarLutNag2023}, we obtain its Chebyshev expansion
\begin{equation}
    g = \sqrt{\frac{1024\pi}{367\pi-256}}\biggl(\frac{15}{32}T_0 - \frac{\pi-1}{2\pi\sqrt{2}}T_1 -\frac{1}{32\sqrt{2}}T_2-\frac{3}{2\pi\sqrt{2}}\sum\limits_{k=3}^{\infty}\frac{\sin(k\pi/2)}{k(k^2-4)}T_k\biggr).
\end{equation}
Similar as for the Fourier system, we obtain estimates for the best $n$-term approximation of $f$ with respect to the tensorized Chebyshev system via rearrangement. We first observe that 
\begin{equation}
    |[f]_{\mathbf{k}}| \lesssim \prod\limits_{i=1}^d (1+k_i)^{-3}\eqqcolon a_{\mathbf{k}},\quad \mathbf{k} \in \mathbb{N}_0^d.
\end{equation}
Using again \cite{KSU15}, the non-increasing rearrangement of  $([f]_{\mathbf{k}})_{\mathbf{k}\in \mathbb{N}_0^d}$ satisfies 
\begin{equation}
    a^*_n \asymp n^{-3}(\log n)^{3(d-1)},\quad n\in \mathbb{N}.
\end{equation}
This implies  
\begin{equation}
    \sigma_n(f;\mathcal{B})_{L_2(\omega_d)} \lesssim \biggl(\sum\limits_{k>n} |a^*_k|^2 \biggr)^{1/2} \asymp n^{-5/2}(\log n)^{3(d-1)}
\end{equation}
and similarly
\begin{equation}
    \sigma_n(f;\mathcal{B})_{\mathcal{A}_1}/\sqrt{n} \lesssim \frac{1}{\sqrt{n}}\sum\limits_{k>n} |a^*_k| \asymp n^{-5/2}(\log n)^{3(d-1)}.
\end{equation}
Choosing the hyperbolic cross search space large enough we ensure that the corresponding projection error is of the order \eqref{eq:bestmterm2}.
This can be realized by choosing $s \asymp n^\kappa$ with $\kappa>1$.
Hence, from \Cref{thm:Error_Recovery_Ops} we obtain for this function 
\begin{equation}
	\lVert f - H_{\mathbf{X}}(f)\rVert_{L_2(\omega_d)} \leq n^{-\sfrac{1}{2}}\sigma_n(f;\mathcal{B})_{\mathcal{A}_1} + \lVert f-P_{J}f\rVert_{L_\infty} = n^{-5/2}(\log n)^{3(d-1)},
\end{equation}
where $H_{\mathbf{X}}$  is either the \rlasso~or~\omp~decoder with $m\approx n(\log n)^3$ many samples. 

The approximation results for $d=6$ are given in \Cref{tab:func2}.
For \rlasso, we use the parameter $\lambda = \sqrt{m}$.
Again, we indicate the (approximated) truncation error on the right.
As for the previous examples, we observe that the errors decay with more samples before they eventually saturate at the same order of magnitude as the truncation error.
Numerically, we verify the convergence rates for $d=6$ and $J$ with $\vert J \vert = 150K$ in \Cref{fig:Fkt2CoSaMP}.
Here, the truncation error behaves much better than the theoretical guaranteed rate $n^{-\sfrac{5}{2}}(\log n)^{15}$.
Interestingly, also all three nonlinear decoders roughly match the decay rate of the best $n$-term approximation, which might be a pre-asymptotic effect. 
For \omp, we again observe a saturation effect for $m$ many samples.

\begin{figure}[H]
    \centering
    \captionof{table}{Function 3 with \rlasso \ (top), \omp \ (middle) and \cosamp \ (bottom): The search space is a hyperbolic cross and the samples are drawn according to the Chebyshev density on $[-1,1]^6$.
    Errors are computed using the ground-truth coefficients.
    All values are up to sampling randomness.}\label{tab:func2}
    
    \tabcolsep=0.15cm
    \begin{tabular}{c|cccccc|c}
    \toprule
    $\lvert J \rvert$ | $m$ & 1K & 5K & 10K & 50K & 100K & 200K  & trunc.\ error\\
    \midrule
    1K  & \num{5.35e-2} & \num{2.12e-2} & \num{1.87e-2} & \num{1.58e-2} & \num{1.55e-2} & \num{1.47e-2}  & \num{1.34e-2}\\
    3.8K  & \num{6.19e-2} & \num{6.92e-3} & \num{4.31e-3} & \num{3.19e-3} & \num{3.00e-3} & \num{2.81e-3}  &  \num{2.41e-3} \\
    12K  & \num{1.42e-1} & \num{5.15e-3} & \num{2.06e-3} & \num{1.14e-3} & \num{1.01e-3} & \num{9.51e-4} & \num{7.47e-4} \\
    23K & \num{1.14e-1} & \num{9.11e-3} & \num{1.73e-3} & \num{4.01e-4} & \num{3.44e-4} & \num{3.08e-4} & \num{2.30e-4}\\
    52K & \num{1.93e-1} & \num{1.15e-2} & \num{2.97e-3} & \num{1.85e-4} & \num{1.39e-4} & \num{1.20e-4} & \num{7.98e-5}\\
    150K & \num{1.90e-1} & \num{1.70e-2} & \num{5.58e-3} & \num{1.18e-4} & \num{4.28e-5} & \num{2.88e-5} & \num{1.53e-5}\\
    \bottomrule
\end{tabular}

    \vspace{3mm}
    \begin{tabular}{c|cccccc|c}
    \toprule
    $\lvert J \rvert$ | $m$ & 1K & 5K & 10K & 50K & 100K & 200K  & trunc.\ error\\
    \midrule
    1K  & \num{1.40e-1} & \num{2.07e-2} & \num{1.96e-2} & \num{1.58e-2} & \num{1.50e-2} & \num{1.45e-2}  & \num{1.34e-2}\\
    3.8K  & \num{1.24e-1} & \num{9.91e-3} & \num{4.76e-3} & \num{3.14e-3} & \num{2.94e-3} & \num{2.81e-3}  &  \num{2.41e-3} \\
    12K  & \num{1.69e-1} & \num{1.32e-2} & \num{4.50e-3} & \num{1.24e-3} & \num{1.05e-3} & \num{9.43e-4} & \num{7.47e-4} \\
    23K & \num{2.10e-1} & \num{1.75e-2} & \num{3.26e-3} & \num{5.11e-4} & \num{3.72e-4} & \num{3.18e-4} & \num{2.30e-4}\\
    52K & \num{1.83e-1} & \num{1.84e-2} & \num{3.97e-3} & \num{2.42e-4} & \num{1.81e-4} & \num{1.58e-4} & \num{7.98e-5}\\
    150K & \num{1.84e-1} & \num{2.15e-2} & \num{4.96e-3} & \num{2.22e-4} & \num{1.44e-4} & \num{1.27e-4} & \num{1.53e-5}\\
    \bottomrule
\end{tabular}

\end{figure}
\begin{figure}[H]
\centering
    \begin{tabular}{c|cccccc|c}
    \toprule
    $\lvert J \rvert$ | $m$ & 1K & 5K & 10K & 50K & 100K & 200K  & trunc.\ error\\
    \midrule
    1K  & \num{1.03e-1} & \num{2.54e-2} & \num{1.92e-2} & \num{1.58e-2} & \num{1.50e-2} & \num{1.46e-2}  & \num{1.34e-2}\\
    3.8K  & \num{1.82e-1} & \num{9.68e-3} & \num{4.76e-3} & \num{3.16e-3} & \num{2.92e-3} & \num{2.76e-3}  &  \num{2.41e-3} \\
    12K  & \num{2.21e-1} & \num{2.32e-2} & \num{3.28e-3} & \num{1.24e-3} & \num{1.05e-3} & \num{9.46e-4} & \num{7.47e-4} \\
    23K & \num{4.61e-1} & \num{3.24e-2} & \num{5.42e-3} & \num{4.87e-4} & \num{3.71e-4} & \num{3.17e-4} & \num{2.30e-4}\\
    52K & \num{5.92e-1} & \num{7.20e-2} & \num{8.54e-3} & \num{4.03e-4} & \num{1.87e-4} & \num{1.25e-4} & \num{7.98e-5}\\
    150K & \num{6.84e-1} & \num{1.11e-1} & \num{2.99e-2} & \num{6.44e-4} & \num{1.58e-4} & \num{7.25e-5} & \num{1.53e-5}\\
    \bottomrule
    \end{tabular}
    

    \captionof{figure}{Function 3: Approximation error depending on the number of samples for the \rlasso, \omp \ and \cosamp \ algorithm with fixed $J$.
    The plot is in log scale.}

    \vspace{2mm}
    
    \label{fig:Fkt2CoSaMP}
    \centering
    \begin{tikzpicture}
    \begin{axis}[
    width=12cm,
    height=8cm,
    xlabel={Number of samples $m$ or terms $n$.},
    ylabel={$L_2$-Error},
    xmode=log,
    ymode=log,
    log basis x=10,
    log basis y=10,
    grid=both,
    grid style={dotted},
    mark=*,
]

\addplot coordinates {
    (4000,     3.188542e-02)
    (8000,     1.209094e-02)
    (16000,    1.665870e-03)
    (32000,    3.298991e-04)
    (64000,    8.156026e-05)
    (128000,   3.556109e-05)
    (256000,   2.631823e-05) 
};
\addlegendentry{\rlasso~approximant}

\addplot coordinates {
    (4000,     2.496526e-02)
    (8000,     9.501535e-03)
    (16000,    2.354494e-03)
    (32000,    4.531720e-04)
    (64000,    1.713265e-04)
    (128000,   1.356910e-04)
    (256000,   1.241120e-04) 
};
\addlegendentry{\omp~approximant}

\addplot[mark=diamond*]coordinates {
    (4000,     1.806655e-01)
    (8000,     8.351270e-02)
    (16000,    9.821614e-03)
    (32000,    1.841629e-03)
    (64000,    4.598911e-04)
    (128000,   2.037179e-04)
    (256000,   4.974523e-05) 
};
\addlegendentry{\cosamp~approximant}

\addplot[gray, mark=triangle*]coordinates {
    (4000,     0.000822020229)
    (8000,     0.000244716997)
    (16000,    0.000079548678)
    (32000,    0.000023501134)
    (64000,    0.000007010537)
    (128000,   0.000002021078)
    (256000,   0.000000581411)
};
\addlegendentry{best $n$-term}

\end{axis}
\end{tikzpicture}
\end{figure}

\paragraph{Example 4, a kink function.}
Following \cite{EggMinUll25}, we consider the function $f \colon [-1,1]^d \to \R$ with
\begin{equation}
    f(\mathbf{x}) = \prod_{i = 1}^d \frac{\vert 8 x_i -6 - w_i\vert + c_i}{1 + c_i},
\end{equation}
where we used \(c_i = 1\) and \(w_i = 0.4\). We can explicitly calculate the $L_2$-norm of this function to be
\begin{equation}\label{eq:Fun4Norm}
    \lVert f\rVert_{L_2}^2 = \biggl(\frac{32}{5\pi}\cdot\arcsin\biggl(\frac{4}{5}\biggr) + \frac{24}{5\pi} + \frac{1849}{100}\biggr)^d.
\end{equation}
Since no ground-truth Chebyshev coefficients are known in the literature for this function, we have to estimate the approximation error based on Monte-Carlo estimation of the $L_2(\mu)$-norm.
This is done with $10^6$ samples, and we report the average of 5 estimations.
The observed standard deviation is always less than 1\%.
The relative approximation results (we normalize $f$ according to \eqref{eq:Fun4Norm}) are given in \Cref{tab:func3} for $d=6$.
For \rlasso, we use the parameter $\lambda = \sqrt{m}$.

In principle, we see the same trend as before regarding the number of samples $m$ and the size of $J_s$.
We also visualize the convergence behavior for $d=6$ and $J_s$ with $\vert J_s \vert = 150K$ in \Cref{fig:Fkt4CoSaMP}.
Again, all three decoders have roughly the same asymptotic decay.
This decay is determined by the fact that $f$ is a tensor product of piecewise linear functions, which implies the mixed Besov regularity $\mathbf{B}^2_{1,\infty}([-1,1]^d)$.
As in Example 1, this leads to the rate $n^{-\sfrac{3}{2}}(\log n)^{2(d-1)}$, but this time with $d=6$ instead of $d=5$.
For \omp, we observe the same saturation effect as before.

\begin{figure}[H]
    \centering
    \captionof{table}{Function 4 with \rlasso \ (top) and \omp \ (bottom): The frequencies are chosen from the hyperbolic cross and the samples are drawn according to the Chebyshev density on $[-1,1]^6$.}
    \label{tab:func3}
       
    \tabcolsep=0.15cm
    \begin{tabular}{c|cccccccc}
        \toprule
        $\lvert J \rvert$ | $m$ & 1K & 5K & 10K & 20K & 50K & 100K & 300K & 600K\\
        \midrule
       1K  & \num{1.53e-1} & \num{5.59e-2} & \num{4.46e-2} & \num{4.02e-2} & \num{3.86e-2} & \num{3.80e-2} & \num{3.77e-2} & \num{3.76e-2} \\
    2.8K  & \num{1.93e-1} & \num{4.79e-2} & \num{3.62e-2} & \num{3.00e-2} & \num{2.74e-2} & \num{2.67e-2} & \num{2.61e-2} & \num{2.59e-2} \\
    12K  & \num{2.89e-1} & \num{6.75e-2} & \num{3.14e-2} & \num{1.45e-2} & \num{8.67e-3} & \num{7.37e-3} & \num{6.62e-3} & \num{6.45e-3} \\
    23K & \num{2.82e-1} & \num{9.45e-2} & \num{3.49e-2} & \num{1.97e-2} & \num{6.87e-3} & \num{5.04e-3} & \num{4.13e-3} & \num{3.94e-3} \\
    70K & \num{3.65e-1} & \num{1.20e-1} & \num{5.40e-2} & \num{2.76e-2} & \num{8.03e-3} & \num{3.67e-3} & \num{1.92e-3} & \num{1.63e-3} \\
    231K  & \num{3.76e-1} & \num{1.46e-1} & \num{7.69e-2} & \num{3.93e-2} & \num{1.47e-2} & \num{5.24e-3} & \num{1.27e-3} & \num{7.69e-4} \\
        \bottomrule
    \end{tabular}   
    
    \vspace{3mm}
    
    \tabcolsep = .15cm
    \begin{tabular}{c|cccccccc}
        \toprule
        $\lvert J \rvert$ | $m$ & 1K & 5K & 10K & 20K & 50K & 100K & 300K & 600K\\
        \midrule
        1K & \num{3.76e-1} & \num{4.49e-2} & \num{4.06e-2} & \num{3.92e-2} & \num{3.82e-2} & \num{3.78e-2} & \num{3.76e-2} & \num{3.75e-2} \\
        2.8K   & \num{2.52e-1} & \num{4.65e-2} & \num{3.42e-2} & \num{2.93e-2} & \num{2.70e-2} & \num{2.64e-2} & \num{2.60e-2} & \num{2.59e-2} \\
        12K  & \num{3.11e-1} & \num{6.05e-2} & \num{3.70e-2} & \num{1.29e-2} & \num{7.89e-3} & \num{6.89e-3} & \num{6.50e-3} & \num{6.39e-3} \\
        23K & \num{2.93e-1} & \num{7.44e-2} & \num{3.96e-2} & \num{2.15e-2} & \num{6.05e-3} & \num{4.65e-3} & \num{4.00e-3} & \num{3.88e-3} \\
        70K & \num{2.92e-1} & \num{1.11e-1} & \num{4.64e-2} & \num{2.77e-2} & \num{6.79e-3} & \num{4.00e-3} & \num{3.01e-3} & \num{2.83e-3} \\
        231K &\num{3.16e-1} & \num{1.56e-1} & \num{5.30e-2} & \num{3.48e-2} & \num{1.01e-2} & \num{4.79e-3} & \num{3.01e-3} & \num{2.84e-3} \\
        \bottomrule
    \end{tabular}
    
    \captionof{figure}{Function 4: Approximation error depending on the number of samples for the \rlasso, \omp \ and \cosamp \ algorithm with fixed $J$.
    The plot is in log scale.}
    \label{fig:Fkt4CoSaMP}
    
    \vspace{2mm}
    
    \centering
    \begin{tikzpicture}
    \begin{axis}[
    width=12cm,
    height=8cm,
    xlabel={Number of samples $m$.},
    ylabel={$L_2$-Error},
    xmode=log,
    ymode=log,
    log basis x=10,
    log basis y=10,
    grid=both,
    grid style={dotted},
    mark=*,
]

\addplot coordinates {
    (1000,     0.4828723669052124)
    (2000,     0.2446237951517105)
    (4000,     0.1791776567697525)
    (8000,     0.11784767359495163)
    (16000,    0.056535691022872925)
    (32000,    0.028220273554325104)
    (64000,    0.012469477951526642)
    (128000,   0.005053224042057991)
    (256000,   0.0019211926264688373) 
    (512000,   0.0008358341292478144)
};
\addlegendentry{\rlasso~approximant}

\addplot coordinates {
    (1000,     0.3306838870048523) 
    (2000,     0.27405351400375366)
    (4000,     0.20612893998622894)
    (8000,     0.06756797432899475)
    (16000,    0.043454889208078384)
    (32000,    0.02173588052392006)
    (64000,    0.007969559170305729)
    (128000,   0.005617085844278336)
    (256000,   0.005132284481078386)
    (512000,   0.004953311290591955)
}; 
\addlegendentry{\omp~approximant}

\addplot[mark=diamond*]coordinates {
    (1000,     0.7996897101402283) 
    (2000,     0.4913075268268585)
    (4000,     0.4152829051017761)
    (8000,     0.2713572084903717)
    (16000,    0.11322832852602005)
    (32000,    0.07323053479194641)
    (64000,    0.021536849439144135)
    (128000,   0.009170484729111195)
    (256000,   0.0025865575298666954) 
    (512000,   0.0007478544139303267)
};
\addlegendentry{\cosamp~approximant}

\addplot[
    domain=1000:512000,
    thick, no marks,
]
{1.5e-6 * x^(-1.5) * ln(x)^10};
\addlegendentry{$x^{-{\sfrac{3}{2}}}\log(x)^{10}$}
\end{axis}
\end{tikzpicture}
\end{figure}

\subsection{General discussion}
For all examples, the errors decay with increasing number of samples $m$ and expanding search space $J$.
However, there are certain exceptions to that rule. 
First, for $m < \lvert J \rvert$ fixed, we observe that the error often increases with $\lvert J \rvert$, which is neither surprising since the problem becomes more under-determined, nor is it a contradiction to our theoretical observations since $\lvert J \rvert$ appears in the required oversampling bound \eqref{eq:NumberSamples}. And therefore the number of samples required to achieve the same error increases.
Second, the errors stagnate as soon as they are in the order of the truncation error, which is, of course, exactly what we expect due to our bounds in \Cref{thm:Error_Recovery_Ops}, where the term \(\lVert f-P_Jf\rVert_{L_\infty}\) is (bounded from above by) the truncation error. This effect seems to be more prevalent for \omp, while \cosamp \ and the best \(n\)-term approximation perform much better with the same number of non-zero entries in the approximant.

One might also wonder why the errors do not converge to the truncation error.
Here, it should be noted that the samples are not drawn from the truncated function itself.
Consequently, a certain level of mismatch is to be expected.
Third, we observe that the truncation errors do not decay anymore as soon as we reach the numerical precision limit.
In particular, due to the GPU computing approach, we generally only have the single precision limit.
\appendix

\section{RIP and NSP with \rlasso \ recovery guarantees}\label{app:RIPNSP}

For convenience, we recite the key theorems from the literature adapted to our notation and requirements.
This also includes some simplifications leading to weaker statements.

\begin{theorem}[RIP implies $\ell_2$-robust NSP]
If $\mathbf{A} \in \C^{m\times N}$ satisfies the RIP of order $2n$ with $\delta_{2n}<\sfrac{1}{3}$, see \eqref{eq:RIP}, then it satisfies the $\ell_2$-robust NSP of order $n$, i.e.\ $\mathbf{A}\in\operatorname{NSP}(n,2,\lVert\cdot\rVert_{\ell_2},\varrho,\tau)$, where the constants $\varrho \in (0,1)$ and $\tau>0$ depend only on $\delta_{2n}$.
\end{theorem}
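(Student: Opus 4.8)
The plan is to verify the $\ell_2$-robust NSP inequality \eqref{eq:NSP} (with $q=2$, so that $n^{\sfrac1q-1}=n^{-\sfrac12}$ and $\lVert\cdot\rVert=\lVert\cdot\rVert_{\ell_2}$) directly from the RIP \eqref{eq:RIP}, via the standard ``shelling'' decomposition of the complement of $S$, keeping track of all constants. First I would reduce to the worst case $S=S^{*}$, where $S^{*}$ indexes the $n$ largest entries of $\mathbf v$ in modulus. Indeed, for arbitrary $S$ with $\lvert S\rvert\le n$ one has $\lVert\mathbf v_{S}\rVert_{\ell_2}\le\lVert\mathbf v_{S^{*}}\rVert_{\ell_2}$ and $\lVert\mathbf v_{(S^{*})^{\mathrm{C}}}\rVert_{\ell_1}\le\lVert\mathbf v_{S^{\mathrm{C}}}\rVert_{\ell_1}$, so the estimate for $S^{*}$ implies the one for $S$. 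Fixing $S=S^{*}$, I would partition $S^{\mathrm{C}}$ into consecutive blocks $S_1,S_2,\dots$ of size $n$ (the last possibly smaller), ordered so that $S_1$ collects the $n$ largest entries of $\mathbf v_{S^{\mathrm{C}}}$, $S_2$ the next $n$, and so on.

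Next I expand $\lVert\mathbf A\mathbf v_{S}\rVert_{\ell_2}^{2}=\langle\mathbf A\mathbf v_{S},\mathbf A\mathbf v\rangle-\sum_{k\ge1}\langle\mathbf A\mathbf v_{S},\mathbf A\mathbf v_{S_k}\rangle$ and apply the RIP to each piece. The left-hand side is bounded below by $(1-\delta_{2n})\lVert\mathbf v_{S}\rVert_{\ell_2}^{2}$ (using $\delta_n\le\delta_{2n}$), the first term on the right is at most $\sqrt{1+\delta_{2n}}\,\lVert\mathbf v_{S}\rVert_{\ell_2}\lVert\mathbf A\mathbf v\rVert_{\ell_2}$, and each cross term obeys the RIP-orthogonality bound $\lvert\langle\mathbf A\mathbf v_{S},\mathbf A\mathbf v_{S_k}\rangle\rvert\le\delta_{2n}\lVert\mathbf v_{S}\rVert_{\ell_2}\lVert\mathbf v_{S_k}\rVert_{\ell_2}$, valid because $\mathbf v_{S}$ and $\mathbf v_{S_k}$ have disjoint supports of total size at most $2n$ (this is the standard consequence of \eqref{eq:RIP}, e.g.\ \cite[Chapt.\ 6]{FoRa13}). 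Dividing by $\lVert\mathbf v_{S}\rVert_{\ell_2}$ (the case $\mathbf v_{S}=\mathbf 0$ being trivial) yields
\[
(1-\delta_{2n})\lVert\mathbf v_{S}\rVert_{\ell_2}\le\sqrt{1+\delta_{2n}}\,\lVert\mathbf A\mathbf v\rVert_{\ell_2}+\delta_{2n}\sum_{k\ge1}\lVert\mathbf v_{S_k}\rVert_{\ell_2}.
\]

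The key remaining ingredient is the block estimate. Monotonicity of the sorted entries gives $\lVert\mathbf v_{S_{k+1}}\rVert_{\ell_2}\le n^{-\sfrac12}\lVert\mathbf v_{S_k}\rVert_{\ell_1}$ for $k\ge1$, hence $\sum_{k\ge1}\lVert\mathbf v_{S_k}\rVert_{\ell_2}\le\lVert\mathbf v_{S_1}\rVert_{\ell_2}+n^{-\sfrac12}\lVert\mathbf v_{S^{\mathrm{C}}}\rVert_{\ell_1}$. The decisive point, which is exactly where the threshold $\sfrac13$ enters, is that after the reduction to $S=S^{*}$ the block $S_1$ consists of entries no larger than those of $S$, so $\lVert\mathbf v_{S_1}\rVert_{\ell_2}\le\lVert\mathbf v_{S}\rVert_{\ell_2}$. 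Substituting this and moving the resulting $\delta_{2n}\lVert\mathbf v_{S}\rVert_{\ell_2}$ to the left gives
\[
(1-2\delta_{2n})\lVert\mathbf v_{S}\rVert_{\ell_2}\le\sqrt{1+\delta_{2n}}\,\lVert\mathbf A\mathbf v\rVert_{\ell_2}+\delta_{2n}\,n^{-\sfrac12}\lVert\mathbf v_{S^{\mathrm{C}}}\rVert_{\ell_1}.
\]
Since $\delta_{2n}<\sfrac13$ guarantees $1-2\delta_{2n}>\delta_{2n}>0$, dividing by $1-2\delta_{2n}$ produces \eqref{eq:NSP} with $\varrho=\delta_{2n}/(1-2\delta_{2n})\in(0,1)$ and $\tau=\sqrt{1+\delta_{2n}}/(1-2\delta_{2n})$, both depending only on $\delta_{2n}$, and transferring back to arbitrary $S$ via the reduction completes the proof.

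I expect the main obstacle to be the correct treatment of the $\lVert\mathbf v_{S_1}\rVert_{\ell_2}$ term: a naive bound such as $\lVert\mathbf v_{S_1}\rVert_{\ell_2}\le\lVert\mathbf v_{S^{\mathrm{C}}}\rVert_{\ell_1}$ destroys the $n^{-\sfrac12}$ normalization and yields no usable NSP. The worst-case reduction to $S^{*}$ is precisely what allows this term to be absorbed into the left-hand side, and it is the surviving factor $1-2\delta_{2n}$ that must remain strictly larger than $\delta_{2n}$ in order to force $\varrho<1$, thereby pinning down the hypothesis $\delta_{2n}<\sfrac13$.
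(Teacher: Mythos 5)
Your proposal is correct and takes essentially the same route as the paper's proof: the same reduction to the set of the $n$ largest entries, the same shelling of the complement into size-$n$ blocks combined with the RIP cross-term estimate from \cite[Prop.\ 6.3]{FoRa13}, and it even produces the identical constants $\varrho=\delta_{2n}/(1-2\delta_{2n})$ and $\tau=\sqrt{1+\delta_{2n}}/(1-2\delta_{2n})$. The only cosmetic difference is that you absorb the first block via the entrywise comparison $\lVert\mathbf{v}_{S_1}\rVert_{\ell_2}\leq\lVert\mathbf{v}_{S}\rVert_{\ell_2}$, whereas the paper keeps that block inside the $\ell_1$-sum and absorbs it afterwards through $\lVert\mathbf{v}_{S_0}\rVert_{\ell_1}\leq n^{1/2}\lVert\mathbf{v}_{S_0}\rVert_{\ell_2}$ --- the same step in different bookkeeping.
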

\begin{proof}
Let $\mathbf{v} \in \C^N \setminus \{\boldsymbol{0}\}$.
We consider the index set $J_n(\mathbf{v})$ of the $n$ largest entries of $\mathbf{v}$ in absolute value, and partition $[N]$ into $S_k\coloneqq J_{(k+1)n}(\mathbf{v})\setminus J_{kn}(\mathbf{v})$ for $0\leq k<\lceil\sfrac{N}{n}\rceil$.
It suffices to show the $\ell_2$-robust NSP for $S_0$ since \eqref{eq:NSP} holds for all $S\subset[N]$ with $\lvert S\rvert \leq n$ if it holds for $S_0$, namely the $n$ largest entries.
Since $S_0\cap S_k=\emptyset$ for all $k>0$, the $n$-sparse vectors $\mathbf{v}_{S_0}$ and $\mathbf{v}_{S_k}$ have disjoint support, 
and \cite[Prop.\ 6.3]{FoRa13} gives us
\begin{gather}\label{eq:Est_scalar}
    \lvert\langle \mathbf{A}\mathbf{v}_{S_0}, \mathbf{A}\mathbf{v}_{S_k} \rangle\rvert\leq \delta_{2n} \lVert\mathbf{v}_{S_0}\rVert_{\ell_2}  \lVert\mathbf{v}_{S_k}\rVert_{\ell_2}.
\end{gather}
Moreover, since $\lvert v_i\rvert\leq\lvert v_j\rvert$ for all $i\in S_k$ and $j\in S_{k-1}$, we have $\lvert v_i\rvert \leq n^{-1}\lVert\mathbf{v}_{S_{k-1}}\rVert_{\ell_1}$ for all $k\geq 1$ and $i\in S_k$.
Summing over all $i\in S_k$, we see that
\begin{gather}\label{eq:Est_norm}
    \lVert\mathbf{v}_{S_k}\rVert_{\ell_2} = \Bigl(\sum_{i\in S_k}\lvert v_i\rvert^2 \Bigr)^{\sfrac{1}{2}} \leq n^{\sfrac{1}{2}}\big(n^{-1}\lVert\mathbf{v}_{S_{k-1}}\rVert_{\ell_1}\big)= n^{-\sfrac{1}{2}}\lVert\mathbf{v}_{S_{k-1}}\rVert_{\ell_1}.
\end{gather}
With \eqref{eq:Est_scalar} and \eqref{eq:Est_norm} at hand, we estimate as follows
\begin{align}
\begin{split}
\lVert\mathbf{v}_{S_0}\rVert_{\ell_2}^2 & \leq \frac{1}{1-\delta_{2n}}\lVert\mathbf{A}\mathbf{v}_{S_0}\rVert_{\ell_2}^2=\frac{1}{1-\delta_{2n}} \Bigl\langle \mathbf{A}\mathbf{v}_{S_0}, \mathbf{A}\mathbf{v} - \sum \limits_{k\geq 1} \mathbf{A}\mathbf{v}_{S_k}\Bigr\rangle\\
&\leq\frac{1}{1-\delta_{2n}} \Big( \lvert\langle \mathbf{A}\mathbf{v}_{S_0}, \mathbf{A}\mathbf{v} \rangle\rvert + \sum \limits_{k\geq 1} \lvert\langle \mathbf{A}\mathbf{v}_{S_0},\mathbf{A}\mathbf{v}_{S_k}\rangle\rvert \Big)\\
&\leq\frac{1}{1-\delta_{2n}} \Big( \lVert\mathbf{A}\mathbf{v}_{S_0}\rVert_{\ell_2} \lVert\mathbf{A}\mathbf{v}\rVert_{\ell_2} + \delta_{2n} \lVert\mathbf{v}_{S_0}\rVert_{\ell_2} \sum_{k\geq 1}  \lVert\mathbf{v}_{S_k}\rVert_{\ell_2} \Big)\\
&\leq\frac{1}{1-\delta_{2n}} \Big( \sqrt{1+\delta_{2n}}\lVert\mathbf{v}_{S_0}\rVert_{\ell_2} \lVert\mathbf{A}\mathbf{v}\rVert_{\ell_2} +\delta_{2n} \lVert\mathbf{v}_{S_0}\rVert_{\ell_2}n^{-\sfrac{1}{2}}\sum \limits_{k\geq 1} \lVert\mathbf{v}_{S_{k-1}}\rVert_{\ell_1}\Big).
\end{split}
\end{align}
Division by $\lVert\mathbf{v}_{S_0}\rVert_{\ell_2}$ yields
\begin{gather}
    \lVert\mathbf{v}_{S_0}\rVert_{\ell_2}\leq\frac{\sqrt{1+\delta_{2n}}}{1-\delta_{2n}}\lVert\mathbf{A}\mathbf{v}\rVert_{\ell_2}+\frac{\delta_{2n}}{1-\delta_{2n}} n^{-\sfrac{1}{2}} \big(\lVert\mathbf{v}_{S_0}\rVert_{\ell_1}+ \lVert\mathbf{v}_{S_0^\mathrm{C}}\rVert_{\ell_1} \big).
\end{gather}
Due to the fact that $\lVert\mathbf{v}_{S_0}\rVert_{\ell_1} \leq n^{\sfrac{1}{2}} \lVert\mathbf{v}_{S_0}\rVert_{\ell_2}$, we can rearrange this to
\begin{gather}\label{eq:NSP_proof}
\lVert\mathbf{v}_{S_0}\rVert_{\ell_2}\leq\biggl(1-\frac{\delta_{2n}}{1-\delta_{2n}}\biggr)^{-1} \biggl (\frac{\delta_{2n}}{1-\delta_{2n}} n^{-\sfrac{1}{2}}\lVert\mathbf{v}_{S_0^\mathrm{C}}\rVert_{\ell_1} +  \frac{\sqrt{1+\delta_{2n}}}{1-\delta_{2n}}\lVert\mathbf{A}\mathbf{v}\rVert_{\ell_2} \biggr).
\end{gather}
Comparing \eqref{eq:NSP} and \eqref{eq:NSP_proof}, the assertion follows by setting $\varrho$ and $\tau$ accordingly. 
\end{proof}

The following theorem gives bounds for the \rlasso \ minimization problem \eqref{eq:MinimizationProblem}. Such bounds were first shown by Petersen and Jung in \cite[Thm.\ 3.1]{PeJu22} for real matrices. The version below is adapted to complex matrices.

\begin{theorem}[{\cite[Thm.\ 6.29]{AdcockBrugWebster22}}]\label{thm:LassoNSP}
Let $\mathbf{A}\in\mathbb{C}^{m\times N}$ with $\mathbf{A}\in\operatorname{NSP}(n,2,\lVert\cdot\rVert_{\ell_2},\varrho,\tau)$. If
\begin{equation}
    \lambda\geq \frac{3+\varrho}{1+\varrho}\cdot\tau\sqrt{n},
\end{equation}
then every minimizer
\begin{equation}\label{eq:lambdaLowerBound}
    \mathbf{r(\mathbf{y})} \in \argmin_{\mathbf{z}\in\mathbb{C}^N}\lVert\mathbf{z}\rVert_{\ell_1}+\lambda\lVert\mathbf{y}-\mathbf{A}\mathbf{z}\rVert_{\ell_2}
\end{equation}
obeys, for all $\mathbf{v}\in\mathbb{C}^N$,
\begin{equation}\label{eq:LassoNSP}
    \lVert\mathbf{v}-\mathbf{r}(\mathbf{y})\rVert_{\ell_p}\leq C\bigl(n^{\sfrac{1}{p}-1}\sigma_n(\mathbf{v})_{\ell_1} + \bigl(\lambda n^{\sfrac{1}{p}-1}+n^{1-\sfrac{p}{2}}\bigr)\lVert\mathbf{y}-\mathbf{A}\mathbf{v}\rVert_{\ell_2}\bigr).
\end{equation}
for $p\in\{1,2\}$ and a constant $C>0$ depending on $\varrho$ and $\tau$.
\end{theorem}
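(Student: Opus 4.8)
The plan is to give a self-contained proof from the $\ell_2$-robust null space property, following the classical compressed-sensing template while carefully tracking the dependence on $\lambda$. Fix $\mathbf{v}\in\mathbb{C}^N$, let $\hat{\mathbf{x}}\coloneqq\mathbf{r}(\mathbf{y})$ be any minimizer of \eqref{eq:lambdaLowerBound}, and set $\mathbf{w}\coloneqq\hat{\mathbf{x}}-\mathbf{v}$ together with $\eta\coloneqq\lVert\mathbf{y}-\mathbf{A}\mathbf{v}\rVert_{\ell_2}$. Let $S$ be the index set of the $n$ largest entries of $\mathbf{v}$ in absolute value, so that $\lVert\mathbf{v}_{S^{\mathrm C}}\rVert_{\ell_1}=\sigma_n(\mathbf{v})_{\ell_1}$. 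First I would upgrade the hypothesis $\mathbf{A}\in\operatorname{NSP}(n,2,\lVert\cdot\rVert_{\ell_2},\varrho,\tau)$ to an $\ell_1$-version: since \eqref{eq:NSP} holds for every index set of size at most $n$, applying it to $\mathbf{w}$ with the set $S$ and using Cauchy--Schwarz $\lVert\mathbf{w}_S\rVert_{\ell_1}\leq\sqrt{n}\,\lVert\mathbf{w}_S\rVert_{\ell_2}$ yields
\begin{equation*}
\lVert\mathbf{w}_S\rVert_{\ell_1}\leq\varrho\,\lVert\mathbf{w}_{S^{\mathrm C}}\rVert_{\ell_1}+\tau\sqrt{n}\,\lVert\mathbf{A}\mathbf{w}\rVert_{\ell_2}.
\end{equation*}

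Next I would exploit the minimality of $\hat{\mathbf{x}}$. The optimality inequality $\lVert\hat{\mathbf{x}}\rVert_{\ell_1}+\lambda\lVert\mathbf{A}\hat{\mathbf{x}}-\mathbf{y}\rVert_{\ell_2}\leq\lVert\mathbf{v}\rVert_{\ell_1}+\lambda\eta$, combined with a split of $\lVert\mathbf{v}\rVert_{\ell_1}-\lVert\hat{\mathbf{x}}\rVert_{\ell_1}$ over $S$ and $S^{\mathrm C}$ (forward triangle inequality on $S$, reverse on $S^{\mathrm C}$), produces the cone-type bound $\lVert\mathbf{v}\rVert_{\ell_1}-\lVert\hat{\mathbf{x}}\rVert_{\ell_1}\leq\lVert\mathbf{w}_S\rVert_{\ell_1}-\lVert\mathbf{w}_{S^{\mathrm C}}\rVert_{\ell_1}+2\sigma_n(\mathbf{v})_{\ell_1}$. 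Inserting this into the optimality inequality, substituting the $\ell_1$-NSP for $\lVert\mathbf{w}_S\rVert_{\ell_1}$, and using $\lVert\mathbf{A}\mathbf{w}\rVert_{\ell_2}\leq\lVert\mathbf{A}\hat{\mathbf{x}}-\mathbf{y}\rVert_{\ell_2}+\eta$, I obtain the master inequality
\begin{equation*}
(\lambda-\tau\sqrt{n})\,\lVert\mathbf{A}\hat{\mathbf{x}}-\mathbf{y}\rVert_{\ell_2}+(1-\varrho)\,\lVert\mathbf{w}_{S^{\mathrm C}}\rVert_{\ell_1}\leq 2\sigma_n(\mathbf{v})_{\ell_1}+(\lambda+\tau\sqrt{n})\,\eta.
\end{equation*}
Here the hypothesis $\lambda\geq\frac{3+\varrho}{1+\varrho}\tau\sqrt{n}$ is decisive: it forces $\lambda-\tau\sqrt{n}\geq\frac{2}{1+\varrho}\tau\sqrt{n}>0$, and more importantly it caps the ratio $\frac{\lambda+\tau\sqrt{n}}{\lambda-\tau\sqrt{n}}$ at $2+\varrho$ (the ratio is largest exactly at the threshold and tends to $1$ as $\lambda\to\infty$). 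Both left-hand terms being nonnegative, I can then isolate them separately, obtaining bounds on the residual $\lVert\mathbf{A}\hat{\mathbf{x}}-\mathbf{y}\rVert_{\ell_2}$, on $\lVert\mathbf{w}_{S^{\mathrm C}}\rVert_{\ell_1}$, and hence on $\lVert\mathbf{A}\mathbf{w}\rVert_{\ell_2}$, with constants depending only on $\varrho,\tau$.

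Finally I would assemble the two exponents. For $p=1$, the $\ell_1$-NSP gives $\lVert\mathbf{w}\rVert_{\ell_1}\leq(1+\varrho)\lVert\mathbf{w}_{S^{\mathrm C}}\rVert_{\ell_1}+\tau\sqrt{n}\,\lVert\mathbf{A}\mathbf{w}\rVert_{\ell_2}$; the $\lambda\eta$-contribution enters through $\lVert\mathbf{w}_{S^{\mathrm C}}\rVert_{\ell_1}$ and the $\sqrt{n}\,\eta$-contribution through $\tau\sqrt{n}\,\lVert\mathbf{A}\mathbf{w}\rVert_{\ell_2}$, reproducing \eqref{eq:LassoNSP} with $n^{1/p-1}=1$ and residual weight $\lambda+n^{1/2}$. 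For $p=2$, I would choose $T$ to be the index set of the $n$ largest entries of $\mathbf{w}$ (not of $\mathbf{v}$), apply the $\ell_2$-NSP to $\mathbf{w}_T$, and combine it with the Stechkin tail estimate $\lVert\mathbf{w}_{T^{\mathrm C}}\rVert_{\ell_2}\leq\frac{1}{2\sqrt{n}}\lVert\mathbf{w}\rVert_{\ell_1}$ to get $\lVert\mathbf{w}\rVert_{\ell_2}\leq(\varrho+\tfrac12)n^{-1/2}\lVert\mathbf{w}\rVert_{\ell_1}+\tau\lVert\mathbf{A}\mathbf{w}\rVert_{\ell_2}$; substituting the $p=1$ bound and the residual bound turns $n^{-1/2}\lVert\mathbf{w}\rVert_{\ell_1}$ into $n^{-1/2}\sigma_n(\mathbf{v})_{\ell_1}+(\lambda n^{-1/2}+1)\eta$, which is precisely the $p=2$ instance of \eqref{eq:LassoNSP}. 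The main obstacle is the constant bookkeeping, namely propagating the $\lambda$-dependence so that the residual coefficient collapses to exactly $\lambda n^{1/p-1}+n^{1-p/2}$; this is where the sharp threshold $\frac{3+\varrho}{1+\varrho}\tau\sqrt{n}$ is needed, and where one must keep the two best-$n$ sets (anchored at $\mathbf{v}$ for the cone condition, at $\mathbf{w}$ for the Stechkin step) carefully distinct. Since the complex case differs from the real one only through triangle and inner-product inequalities already used, no extra argument is required beyond working over $\mathbb{C}$ throughout.
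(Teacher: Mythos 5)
Your proof is correct, but note that the paper does not actually prove this statement: it is quoted verbatim from \cite[Thm.\ 6.29]{AdcockBrugWebster22} (going back to \cite[Thm.\ 3.1]{PeJu22}), so your blind attempt supplies the argument the paper delegates to the literature, and it follows essentially the same standard route as the cited source. All the key steps check out: Cauchy--Schwarz upgrades the $\ell_2$-robust NSP to the $\ell_1$-version $\lVert\mathbf{w}_S\rVert_{\ell_1}\leq\varrho\lVert\mathbf{w}_{S^{\mathrm C}}\rVert_{\ell_1}+\tau\sqrt{n}\,\lVert\mathbf{A}\mathbf{w}\rVert_{\ell_2}$; the optimality of $\mathbf{r}(\mathbf{y})$ plus the cone inequality gives exactly your master inequality $(\lambda-\tau\sqrt{n})\lVert\mathbf{A}\hat{\mathbf{x}}-\mathbf{y}\rVert_{\ell_2}+(1-\varrho)\lVert\mathbf{w}_{S^{\mathrm C}}\rVert_{\ell_1}\leq 2\sigma_n(\mathbf{v})_{\ell_1}+(\lambda+\tau\sqrt{n})\eta$; the threshold $\lambda\geq\frac{3+\varrho}{1+\varrho}\tau\sqrt{n}$ indeed yields $\lambda-\tau\sqrt{n}\geq\frac{2}{1+\varrho}\tau\sqrt{n}$ and $\frac{\lambda+\tau\sqrt{n}}{\lambda-\tau\sqrt{n}}\leq 2+\varrho$; and your assembly of the two exponents (best-$n$ set anchored at $\mathbf{v}$ for the cone step, at $\mathbf{w}$ for the Stechkin step, whose constant $\frac{1}{2\sqrt{n}}$ is the sharp bound of \cite[Thm.\ 2.5]{FoRa13}, though $n^{-\sfrac{1}{2}}$ from \Cref{thm:Stechkin} would also do) reproduces \eqref{eq:LassoNSP} with residual weight $\lambda n^{\sfrac{1}{p}-1}+n^{1-\sfrac{p}{2}}$ for both $p=1$ and $p=2$, with $C$ depending only on $\varrho$ and $\tau$ as claimed.
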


There is a similar result to \Cref{thm:RIP} below in \cite[Thm.\ 2.3]{BDJR21}. 
However, we remain skeptical if it shows the correct dependency on the parameter $\varepsilon>0$ (which, however, is only of minor importance to us). 

\begin{theorem}[{\cite[Thm.\ 6.15]{AdcockBrugWebster22}}]\label{thm:RIP}
    Let $\mathcal{B}=\{b_j\colon \; j\in I\}\subset L_2(\mu)$ be a BOS and $B=\sup_{j\in I}\lVert b_j\rVert_{L_\infty}$.
    Then there are constants $\alpha>0$ such that for all $n\in\mathbb{N}$, $J\subset I$ finite, $\varepsilon>0$, $\gamma\in(0,1)$ and
    \begin{equation}
        m\geq \alpha B^2\varepsilon^{-2}n w_n\bigl(\varepsilon^{-4}w_n\log(2\lvert J\rvert)+ \varepsilon^{-1}\log\bigl(2(\gamma\varepsilon)^{-1}w_n\bigr)\bigr),
    \end{equation}
    where $w_n = \log(2B^2\varepsilon^{-2}n)$, the matrix $\mathbf{A}$ defined in $\eqref{eq:matrix}$ satisfies the RIP with constant $\delta_{n}<\varepsilon$ with probability at least $1-\gamma$.
\end{theorem}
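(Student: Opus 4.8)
The plan is to exhibit the restricted isometry constant as the supremum of a second-order chaos process over sparse vectors and then control that supremum by the generic-chaining ($\gamma_2$-functional) machinery. From \eqref{eq:matrix} we have $(\mathbf A\mathbf v)_l = m^{-\sfrac12}g_{\mathbf v}(x^l)$ with $g_{\mathbf v}\coloneqq\sum_{j\in J}v_jb_j$, so that $\lVert\mathbf A\mathbf v\rVert_{\ell_2}^2=\frac1m\sum_{l=1}^m|g_{\mathbf v}(x^l)|^2$, and orthonormality of the BOS gives $\mathbb E|g_{\mathbf v}(x^l)|^2=\lVert g_{\mathbf v}\rVert_{L_2(\mu)}^2=\lVert\mathbf v\rVert_{\ell_2}^2$. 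By the variational form of \eqref{eq:RIP},
\[
\delta_n = \sup_{\mathbf v \in D_{n}} \Bigl| \lVert \mathbf A \mathbf v\rVert_{\ell_2}^2 - \lVert\mathbf v\rVert_{\ell_2}^2 \Bigr|, \qquad D_n \coloneqq \{\mathbf v \in \C^{\lvert J\rvert} : \lVert \mathbf v\rVert_{\ell_0}\le n,\ \lVert\mathbf v\rVert_{\ell_2}\le 1\},
\]
which displays $\delta_n$ as the uniform deviation of an empirical average from its mean over the sparse unit set $D_n$. The boundedness assumption enters only through the pointwise estimate $\lVert g_{\mathbf v}\rVert_{L_\infty}\le B\lVert\mathbf v\rVert_{\ell_1}\le B\sqrt n$ valid for all $\mathbf v\in D_n$.

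Next I would bound $\mathbb E\,\delta_n$. Since $\sqrt m\,\mathbf A$ has independent rows that are bounded by $B$ and act isometrically in expectation on $D_n$, I would invoke the bound of Krahmer, Mendelson and Rauhut on suprema of chaos processes (or Dirksen's generic-chaining refinement), which controls $\mathbb E\,\delta_n$ in terms of Talagrand's $\gamma_2$-functional of the index set measured in the operator norm, together with a radius term. Identifying each $\mathbf v\in D_n$ with the associated matrix block, this yields an estimate of the shape
\[
\mathbb E\,\delta_n \;\lesssim\; \frac{B}{\sqrt m}\,\gamma_2(D_n) \;+\; \frac{1}{m}\,\gamma_2(D_n)^2 ,
\]
where $\gamma_2(D_n)$ is Talagrand's functional of $D_n$ in the relevant metric. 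This reduces the whole problem to a purely geometric covering estimate for the set of $n$-sparse vectors.

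The estimate of $\gamma_2(D_n)$ is the technical heart and the step I expect to be the main obstacle. By Dudley's inequality it suffices to bound the covering numbers $\mathcal N(D_n,\epsilon)$, and two regimes must be balanced against each other. At coarse scales I would use the volumetric bound $\log \mathcal N(D_n,\epsilon)\lesssim n\log(e\lvert J\rvert/n)+ n\log(1/\epsilon)$, coming from the $\binom{\lvert J\rvert}{n}$ choices of support together with an $\epsilon$-net of each $n$-dimensional ball; at fine scales this is wasteful, and I would instead apply Maurey's empirical method with the uniform bound $B\sqrt n$ to obtain $\log\mathcal N(D_n,\epsilon)\lesssim \epsilon^{-2}B^2\log\lvert J\rvert$ up to logarithmic corrections. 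Integrating the square root of the smaller of the two entropies over $\epsilon$ and optimizing the crossover scale is precisely what produces the $\log^2 n\,\log\lvert J\rvert$ behaviour; obtaining the sharp power of $\log n$ rather than the cruder $\log^3 n$ of naive Dudley requires the refined chaining of Krahmer-Mendelson-Rauhut and careful tracking of the factor $w_n=\log(2B^2\varepsilon^{-2}n)$. Inserting the resulting bound $\gamma_2(D_n)\lesssim B\sqrt n\,\log n\,\sqrt{\log\lvert J\rvert}$ into the chaos estimate forces $\mathbb E\,\delta_n\le \varepsilon/2$ once $m$ meets the stated lower bound.

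Finally I would upgrade the in-expectation bound to the high-probability statement. Because $\delta_n$ is the supremum of a chaos process with subgaussian increments and a deterministic $\ell_\infty$-type bound governed by $B^2 n$, I would apply the corresponding deviation inequality (Talagrand's concentration in the chaos form of Krahmer-Mendelson-Rauhut, or Dirksen), giving a mixed subgaussian-subexponential tail
\[
\Pr\bigl[\delta_n\ge \mathbb E\,\delta_n + t\bigr]\le \exp\!\bigl(-c\,m\min\{t^2,t\}/(B^2 n\,w_n)\bigr)
\]
up to the relevant normalization. Taking $t=\varepsilon/2$ and absorbing the additive $\varepsilon^{-1}\log(2(\gamma\varepsilon)^{-1}w_n)$ term of the sample-size bound into this tail makes the right-hand side at most $\gamma$, so that $\delta_n<\varepsilon$ holds with probability at least $1-\gamma$, which is the assertion.
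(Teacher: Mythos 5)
First, a point of reference: the paper contains no proof of \Cref{thm:RIP} at all --- it is imported, up to notation, from \cite[Thm.\ 6.15]{AdcockBrugWebster22} (the proof of \Cref{prop:Recovery_vector_errors} also points to \cite[Thm.\ 2.3]{BDJR21}), so your sketch can only be measured against the proofs in those references. Your overall architecture does match them: write $\delta_n=\sup_{\mathbf v\in D_n}\bigl|\lVert\mathbf A\mathbf v\rVert_{\ell_2}^2-\lVert\mathbf v\rVert_{\ell_2}^2\bigr|$ as the supremum of an empirical process over the sparse unit set, use the radius bound $\lVert g_{\mathbf v}\rVert_{L_\infty}\leq B\sqrt{n}$, bound the expected supremum by chaining with a two-regime entropy estimate (volumetric at coarse scales, Maurey at fine scales), and finish with a deviation inequality. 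That skeleton is sound and is exactly how the literature proceeds.

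The genuine gap sits at the step you yourself flag as the heart of the matter: the appeal to the Krahmer--Mendelson--Rauhut chaos bound (or its deviation version). That theorem controls $\sup_{\mathbf V\in\mathcal V}\bigl|\lVert \mathbf V\boldsymbol\xi\rVert_{\ell_2}^2-\mathbb E\lVert \mathbf V\boldsymbol\xi\rVert_{\ell_2}^2\bigr|$ for a \emph{single} random vector $\boldsymbol\xi$ with independent, mean-zero, subgaussian entries and a deterministic set of matrices $\mathcal V$. Here the randomness consists of $m$ iid rows $(b_j(x^l))_{j\in J}$ whose entries are strongly dependent and merely bounded; there is no representation of $\delta_n$ in the required chaos form, so neither the $\gamma_2$-bound $\mathbb E\,\delta_n\lesssim m^{-\sfrac{1}{2}}B\gamma_2+m^{-1}\gamma_2^2$ nor the mixed subgaussian--subexponential tail you quote is available off the shelf. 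What \emph{is} available for this dependence structure is symmetrization plus Dudley's entropy integral with precisely your two covering bounds --- this is the classical route of \cite[Chapt.\ 12]{FoRa13} (note also that the Maurey estimate should read $\log\mathcal N\lesssim\epsilon^{-2}B^2 n\log\lvert J\rvert$ on the appropriately scaled set; your display drops the factor $n$) --- but that route provably loses an additional logarithmic factor relative to the stated $w_n^2\log(2\lvert J\rvert)$ sample complexity. Removing that extra logarithm, which is the entire content of \Cref{thm:RIP} over the classical bound, is achieved not by generic $\gamma_2$-chaining but by the bespoke multiscale net constructions of Bourgain and Haviv--Regev \cite{HaRe17} for subsampled Fourier matrices, adapted to general bounded orthonormal/Riesz systems in \cite{BDJR21}; this is also where the conspicuous $\varepsilon^{-6}$ overall dependence in the hypothesis on $m$ originates --- your scheme, were it applicable, would yield $\varepsilon^{-2}$, which is itself a signal that the quoted chaining theorem cannot be the engine behind this particular statement. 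The final concentration step would likewise have to be replaced, e.g.\ by Talagrand--Bousquet concentration for suprema of bounded empirical processes or by the scale-by-scale probability bookkeeping of \cite{HaRe17,BDJR21}, which produces the additive $\varepsilon^{-1}\log\bigl(2(\gamma\varepsilon)^{-1}w_n\bigr)$ term in the theorem.
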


\section[Proofs for Section 3]{Proofs for \Cref{sec:InstanceOptVec}}\label{sec:ProofsInstance}
For our proofs, we require two results from the literature, which are given below.
\begin{lemma}[{\cite[Lem.\ 7.4.1]{DTU18}}]\label{thm:Stechkin}
For any $q\geq p\geq1$ and any $\mathbf{z}\in\C^N$ we have
\begin{equation}
    \sigma_n(\mathbf{z})_{\ell_q}\leq n^{\sfrac{1}{q}-\sfrac{1}{p}}\lVert \mathbf{z} \rVert_{\ell_p}.
\end{equation}
\end{lemma}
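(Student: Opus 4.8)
The plan is to prove the classical Stechkin inequality by passing to the non-increasing rearrangement and exploiting its monotonicity twice. Writing $\mathbf{z}^{\ast}=(z_k^{\ast})_k$ for the non-increasing rearrangement of $\mathbf{z}$ with respect to absolute values, the best $n$-term approximation in $\ell_q$ is realized by keeping the $n$ largest entries, so that $\sigma_n(\mathbf{z})_{\ell_q}^q=\sum_{k>n}\lvert z_k^{\ast}\rvert^q$, exactly as recorded in the Notation paragraph. If $\mathbf{z}$ has at most $n$ nonzero entries this sum is empty and the claim is trivial, so I may assume $q>p$; the case $q=p$ is immediate as well, since then $n^{\sfrac{1}{q}-\sfrac{1}{p}}=1$ and $\sigma_n(\mathbf{z})_{\ell_p}\leq\lVert\mathbf{z}\rVert_{\ell_p}$ holds by definition.

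The first step I would carry out is a pointwise decay bound on the $n$-th rearranged coefficient. Because $z^{\ast}$ is non-increasing, the $n$ largest terms each dominate $\lvert z_n^{\ast}\rvert$, whence $n\lvert z_n^{\ast}\rvert^p\leq\sum_{j=1}^{n}\lvert z_j^{\ast}\rvert^p\leq\lVert\mathbf{z}\rVert_{\ell_p}^p$ and therefore $\lvert z_n^{\ast}\rvert\leq n^{-\sfrac{1}{p}}\lVert\mathbf{z}\rVert_{\ell_p}$. The second step splits the exponent in the tail and again uses monotonicity via $\lvert z_k^{\ast}\rvert\leq\lvert z_n^{\ast}\rvert$ for all $k>n$ (note $t\mapsto t^{q-p}$ is non-decreasing since $q-p\geq 0$), giving the estimate
\begin{equation*}
\sum_{k>n}\lvert z_k^{\ast}\rvert^q=\sum_{k>n}\lvert z_k^{\ast}\rvert^{q-p}\,\lvert z_k^{\ast}\rvert^p\leq\lvert z_n^{\ast}\rvert^{q-p}\sum_{k>n}\lvert z_k^{\ast}\rvert^p\leq\lvert z_n^{\ast}\rvert^{q-p}\lVert\mathbf{z}\rVert_{\ell_p}^p.
\end{equation*}

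Finally, I would combine the two bounds to obtain $\sigma_n(\mathbf{z})_{\ell_q}^q\leq\bigl(n^{-\sfrac{1}{p}}\lVert\mathbf{z}\rVert_{\ell_p}\bigr)^{q-p}\lVert\mathbf{z}\rVert_{\ell_p}^p=n^{1-\sfrac{q}{p}}\lVert\mathbf{z}\rVert_{\ell_p}^q$, and taking $q$-th roots yields precisely $\sigma_n(\mathbf{z})_{\ell_q}\leq n^{\sfrac{1}{q}-\sfrac{1}{p}}\lVert\mathbf{z}\rVert_{\ell_p}$. I do not expect a genuine obstacle here: the argument is short and the only points requiring care are the exponent bookkeeping and the trivial edge cases ($\mathbf{z}$ with fewer than $n$ nonzero entries, or $q=p$). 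It is worth emphasizing that this route produces the sharp constant $1$, which is cleaner than the alternative of bounding the tail sum $\sum_{k>n}k^{-\sfrac{q}{p}}$ against an integral—an approach that would also work for $q>p$ but would lose a dimension-independent constant factor.
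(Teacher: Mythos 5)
Your proof is correct, and the exponent bookkeeping checks out: the pointwise bound $\lvert z_n^{\ast}\rvert\leq n^{-\sfrac{1}{p}}\lVert\mathbf{z}\rVert_{\ell_p}$ combined with the tail splitting $\lvert z_k^{\ast}\rvert^q=\lvert z_k^{\ast}\rvert^{q-p}\lvert z_k^{\ast}\rvert^p$ gives exactly $n^{1-\sfrac{q}{p}}\lVert\mathbf{z}\rVert_{\ell_p}^q$ after summation, as claimed. Note that the paper itself offers no proof here -- it simply cites \cite[Lem.\ 7.4.1]{DTU18} -- and your argument is precisely the standard rearrangement proof of Stechkin's inequality that underlies the cited reference, so nothing further is needed.
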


\begin{corollary}\label{thm:Sigmas}
Let $\mathbf{z}\in\C^N$ and $q\geq p\geq 1$. Then $\sigma_{3n}(\mathbf{z})_{\ell_q}\leq n^{\sfrac{1}{q}-\sfrac{1}{p}}\sigma_{2n}(\mathbf{z})_{\ell_p}$.
\end{corollary}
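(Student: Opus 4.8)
The plan is to reduce the claim to a single application of the Stechkin-type bound in \Cref{thm:Stechkin}, after peeling off the $2n$ largest coefficients of $\mathbf{z}$. First I would pass to the non-increasing rearrangement $\mathbf{z}^{\ast}=(z_k^{\ast})_k$ and introduce the tail vector $\mathbf{w}\in\C^N$ obtained from $\mathbf{z}$ by setting its $2n$ largest entries (in absolute value) to zero. By the very definition of the best $2n$-term approximation in $\ell_p$, this gives $\lVert\mathbf{w}\rVert_{\ell_p}=\sigma_{2n}(\mathbf{z})_{\ell_p}$, so $\mathbf{w}$ encodes exactly the quantity appearing on the right-hand side of the assertion.

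The key observation is that the best $n$-term approximation error of this tail vector in $\ell_q$ coincides with the best $3n$-term approximation error of the original vector. Indeed, the nonzero entries of $\mathbf{w}$ are precisely $z_{2n+1}^{\ast},z_{2n+2}^{\ast},\dots$, so the non-increasing rearrangement of $\mathbf{w}$ is the shift of $\mathbf{z}^{\ast}$ by $2n$ positions. Discarding the $n$ largest entries of $\mathbf{w}$ therefore removes exactly the coefficients of $\mathbf{z}$ ranked $2n+1$ through $3n$, which yields the identity $\sigma_n(\mathbf{w})_{\ell_q}=\sigma_{3n}(\mathbf{z})_{\ell_q}$.

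With this identification in hand, I would apply \Cref{thm:Stechkin} to $\mathbf{w}$ at the single sparsity level $n$ (using $q\geq p\geq 1$) to obtain $\sigma_n(\mathbf{w})_{\ell_q}\leq n^{\sfrac{1}{q}-\sfrac{1}{p}}\lVert\mathbf{w}\rVert_{\ell_p}$. Chaining the two facts then gives $\sigma_{3n}(\mathbf{z})_{\ell_q}=\sigma_n(\mathbf{w})_{\ell_q}\leq n^{\sfrac{1}{q}-\sfrac{1}{p}}\lVert\mathbf{w}\rVert_{\ell_p}=n^{\sfrac{1}{q}-\sfrac{1}{p}}\sigma_{2n}(\mathbf{z})_{\ell_p}$, which is precisely the stated inequality.

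There is essentially no analytic obstacle here, as everything is elementary once the tail vector $\mathbf{w}$ is introduced; the Stechkin lemma does all the real work. The only point requiring a little care is the rank bookkeeping, namely verifying that the rearrangement of $\mathbf{w}$ is the shift of $\mathbf{z}^{\ast}$ by $2n$ positions, so that the two best-term errors align exactly and the indices $2n$ and $3n$ match up as claimed.
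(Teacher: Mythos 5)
Your proof is correct and is essentially identical to the paper's: the tail vector $\mathbf{w}$ you construct is exactly the vector $\mathbf{z}_{S^{\mathrm{C}}}$ (with $S$ the indices of the $2n$ largest entries) used in the paper, and both arguments rest on the same two identities $\sigma_n(\mathbf{z}_{S^{\mathrm{C}}})_{\ell_q}=\sigma_{3n}(\mathbf{z})_{\ell_q}$ and $\lVert\mathbf{z}_{S^{\mathrm{C}}}\rVert_{\ell_p}=\sigma_{2n}(\mathbf{z})_{\ell_p}$ combined with a single application of \Cref{thm:Stechkin} at sparsity level $n$.
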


\begin{proof}
Let $S$ be the index set of the $2n$ largest absolute entries of $\mathbf{z}$. By \Cref{thm:Stechkin}, we get
\begin{equation}
    \sigma_{3n}(\mathbf{z})_{\ell_q}=\sigma_n(\mathbf{z}_{S^{\mathrm{C}}})_{\ell_q}\leq n^{\sfrac{1}{q}-\sfrac{1}{p}}\lVert \mathbf{z}_{S^{\mathrm{C}}}\rVert_{\ell_p}=n^{\sfrac{1}{q}-\sfrac{1}{p}}\sigma_{2n}(\mathbf{z})_{\ell_p}.\qedhere
\end{equation}
\end{proof}

\begin{lemma}[{\cite[Lem.\ 2.3]{FouPajRauUll10}}]\label{thm:SubsetCombinations}
Let $n,N\in\N$ with $n<N$. Then there is a family of subsets $\mathcal{U}\subset\mathcal{P}(\{1,\ldots,N\})$ satisfying 
\begin{itemize}
    \item $\lvert S\rvert=n$ for all $S\in\mathcal{U}$,
    \item $\lvert S\cap T\rvert<\sfrac{n}{2}$ for all $S,T\in\mathcal{U}$ with $S\neq T$,
    \item $\lvert\mathcal{U}\rvert\geq(\sfrac{N}{4n})^{\sfrac{n}{2}}$.
\end{itemize}
\end{lemma}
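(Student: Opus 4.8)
The plan is to produce the family $\mathcal{U}$ not by an explicit construction but by a counting (greedy packing) argument. The decisive quantity is, for a \emph{fixed} $n$-subset $S\subset\{1,\dots,N\}$, the number of $n$-subsets $T$ that are ``too close'' to $S$ in the sense that $\lvert S\cap T\rvert\ge n/2$. By symmetry this count does not depend on $S$, so I write it as $p\binom{N}{n}$, and the heart of the proof is the estimate $p\le(4n/N)^{n/2}$.

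To obtain this estimate I would argue as follows. If $\lvert S\cap T\rvert\ge n/2$, then $S\cap T$ contains some subset $R_0\subset S$ with $r\coloneqq\lvert R_0\rvert=\lceil n/2\rceil$, and in particular $R_0\subset T$. A union bound over the at most $\binom{n}{\lceil n/2\rceil}\le 2^n$ such half-subsets $R_0\subset S$ gives
\begin{equation}
p\le\binom{n}{r}\cdot\frac{\binom{N-r}{n-r}}{\binom{N}{n}},
\end{equation}
since for each fixed $R_0$ the fraction of $n$-subsets $T$ containing it is exactly $\binom{N-r}{n-r}/\binom{N}{n}$. This fraction telescopes as $\prod_{i=0}^{r-1}\tfrac{n-i}{N-i}$, and because $N>n$ each factor obeys $\tfrac{n-i}{N-i}\le\tfrac{n}{N}$; hence it is at most $(n/N)^{r}\le(n/N)^{n/2}$. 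Combining this with $\binom{n}{r}\le 2^n$ yields $p\le 2^{n}(n/N)^{n/2}=(4n/N)^{n/2}$, as desired.

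With the estimate in hand, I would finish by a greedy packing. Starting from the collection of \emph{all} $\binom{N}{n}$ subsets of size $n$, I repeatedly select an arbitrary remaining set $S$, place it into $\mathcal{U}$, and delete every set $T$ (including $S$ itself) with $\lvert S\cap T\rvert\ge n/2$. Each step deletes at most $p\binom{N}{n}$ sets, so the process runs for at least $\binom{N}{n}/\bigl(p\binom{N}{n}\bigr)=1/p$ steps. The resulting family $\mathcal{U}$ consists of $n$-subsets with pairwise intersections strictly below $n/2$ and satisfies $\lvert\mathcal{U}\rvert\ge 1/p\ge(N/4n)^{n/2}$, which is exactly the claim.

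The two combinatorial bounds are routine; the only point requiring care is keeping the constant in the base equal to $4n$. This forces the use of the sharp per-factor bound $\tfrac{n-i}{N-i}\le\tfrac{n}{N}$ (rather than a cruder one involving $N-n/2$) together with $\binom{n}{\lceil n/2\rceil}\le 2^n$; any looser estimate degrades the base and weakens the conclusion. For odd $n$ one simply reads $n/2$ as $\lceil n/2\rceil$ throughout, which only improves matters since $(n/N)^{\lceil n/2\rceil}\le(n/N)^{n/2}$.
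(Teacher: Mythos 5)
Your proof is correct. Note that the paper itself does not prove this lemma at all -- it imports it verbatim from \cite[Lem.\ 2.3]{FouPajRauUll10} -- so there is no in-paper argument to compare against; what you have written is essentially the standard proof from that reference (see also \cite[Lem.\ 10.12]{FoRa13}): a greedy (or maximal) packing combined with a count of the $n$-subsets meeting a fixed $S$ in at least $n/2$ elements. The only cosmetic difference is in that count: the classical argument bounds it by $\binom{n}{\lceil n/2\rceil}\binom{N}{\lfloor n/2\rfloor}$, whereas your union bound over the $r$-subsets $R_0\subset S$ with the exact count $\binom{N-r}{n-r}$ of supersets is marginally sharper; both reduce, via $\binom{n}{r}\leq 2^n=4^{\sfrac{n}{2}}$ and the telescoping product $\prod_{i=0}^{r-1}\frac{n-i}{N-i}\leq(\sfrac{n}{N})^{r}$, to the same base $4n$, and all the individual estimates you use (the reduction of $\lvert S\cap T\rvert\geq n/2$ to $\lvert S\cap T\rvert\geq\lceil n/2\rceil$, the per-factor bound needing only $n\leq N$, and the step count $\geq 1/p$ in the greedy deletion) check out.
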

Below, we provide the proofs for \Cref{sec:InstanceOptimality}.
\begin{proof}[Proof of \Cref{prop:IO_to_best_n_term}]
We first assume that $(\mathbf{A},\Delta)\in\operatorname{IO}(q,p,n,C_N)$. Let $\mathbf{z}\in\ker(\mathbf{A})$ and $S$ be the index set of the $n$ largest absolute entries of $\mathbf{z}$. Then the instance optimality implies that $-\mathbf{z}_S=\Delta(-\mathbf{A}\mathbf{z}_S)$, and, since $\mathbf{z}\in\ker(\mathbf{A})$, we further get $-\mathbf{z}_S=\Delta(\mathbf{A}\mathbf{z}_{S^\mathrm{C}})$. Using again $(\mathbf{A},\Delta)\in\operatorname{IO}(q,p,n,C_N)$, we obtain the claim \eqref{eqn:IOqNorm} from
\begin{align}
\lVert\mathbf{z}_{S^\mathrm{C}}+\mathbf{z}_S\rVert_{\ell_q}=\rVert\mathbf{z}_{S^\mathrm{C}}-\Delta(\mathbf{A}\mathbf{z}_{S^\mathrm{C}})\rVert_{\ell_q} \leq C_Nn^{\sfrac{1}{q}-\sfrac{1}{p}}\sigma_n(\mathbf{z}_{S^\mathrm{C}})_{\ell_p}= C_Nn^{\sfrac{1}{q}-\sfrac{1}{p}}\sigma_{2n}(\mathbf{z})_{\ell_p}.
\end{align}
We now assume that \eqref{eqn:IOqNorm} holds and define a reconstruction map $\Delta\colon \C^m \to\C^N$ via
\begin{align}\label{eq:DefDelta}
\Delta(\mathbf{y}) \in \argmin \bigl\{\sigma_n(\mathbf{v})_{\ell_p}\colon \; \mathbf{v}\in\mathbb{R}^N \text{ with } \mathbf{A}\mathbf{v}=\mathbf{y}\bigr\}.
\end{align}
For any $\mathbf{z}\in\mathbb{C}^N$, applying \eqref{eqn:IOqNorm} to $\mathbf{z}-\Delta(\mathbf{A}\mathbf{z})\in\ker(\mathbf{A})$ gives
\begin{align}
\lVert\mathbf{z}-\Delta(\mathbf{A}\mathbf{z})\rVert_{\ell_q} \leq C_Nn^{\sfrac{1}{q}-\sfrac{1}{p}}\sigma_{2n}(\mathbf{z}-\Delta(\mathbf{A}\mathbf{z}))_{\ell_p}.
\end{align}
Now, $\sigma_{2n}(\mathbf{u}+\mathbf{v})_{\ell_p}\leq\sigma_n(\mathbf{u})_{\ell_p}+\sigma_n(\mathbf{v})_{\ell_p}$ and the definition of $\Delta$ in \eqref{eq:DefDelta} yield
\begin{align}
\lVert\mathbf{z}-\Delta(\mathbf{A}\mathbf{z})\rVert_{\ell_q}\leq C_Nn^{\sfrac{1}{q}-\sfrac{1}{p}}\left(\sigma_{n}(\mathbf{z})_{\ell_p}+\sigma_n(\Delta(\mathbf{A}\mathbf{z}))_{\ell_p}\right)\leq 2C_Nn^{\sfrac{1}{q}-\sfrac{1}{p}}\sigma_{n}(\mathbf{z})_{\ell_p},
\end{align}
namely that $(\mathbf{A},\Delta)\in\operatorname{IO}(q,p,n,2C_N)$.
\end{proof}

\begin{proof}[Proof of \Cref{thm:IOPreservation}]
Let $\mathbf{z}\in\ker(\mathbf{A})$ be arbitrary.
We denote the set of the $3n$ largest absolute entries of $\mathbf{z}$ by $S$.
From \Cref{prop:IO_to_best_n_term} we get
\begin{align}\label{eq:Estlq1}
\begin{split}
\lVert\mathbf{z}_S\rVert_{\ell_{q'}}&\leq(3n)^{\sfrac{1}{q'}-\sfrac{1}{q}}\lVert\mathbf{z}_S\rVert_{\ell_q}\leq(3n)^{\sfrac{1}{q'}-\sfrac{1}{q}}\lVert\mathbf{z}\rVert_{\ell_q}\leq(3n)^{\sfrac{1}{q'}-\sfrac{1}{q}} C_Nn^{\sfrac{1}{q}-\sfrac{1}{p}}\sigma_{2n}(\mathbf{z})_{\ell_p}\\
&=3^{\sfrac{1}{q'}-\sfrac{1}{q}}C_N n^{\sfrac{1}{q'}-\sfrac{1}{p}}\sigma_{2n}(\mathbf{z})_{\ell_p}\leq 3C_N n^{\sfrac{1}{q'}-\sfrac{1}{p}}\sigma_{2n}(\mathbf{z})_{\ell_p}.
\end{split}
\end{align}
Moreover, \Cref{thm:Sigmas} yields
\begin{gather}\label{eq:Estlq2}
    \lVert\mathbf{z}_{S^\mathrm{C}}\rVert_{\ell_{q'}}=\sigma_{3n}(\mathbf{z})_{\ell_{q'}}\leq n^{\sfrac{1}{q'}-\sfrac{1}{p}}\sigma_{2n}(\mathbf{z})_{\ell_p}.
\end{gather}
Combining \eqref{eq:Estlq1} and \eqref{eq:Estlq2}, we infer that
\begin{gather}
   \lVert\mathbf{z}\rVert_{\ell_{q'}}\leq\lVert\mathbf{z}_S\rVert_{\ell_{q'}}+\lVert\mathbf{z}_{S^\mathrm{C}}\rVert_{\ell_{q'}}\leq(3C_N+1)n^{\sfrac{1}{q'}-\sfrac{1}{p}}\sigma_{2n}(\mathbf{z})_{\ell_p}. 
\end{gather}
Using again \Cref{prop:IO_to_best_n_term}, we thus get $(\mathbf{A},\Delta)\in\operatorname{IO}(q^\prime,p,n,6C_N + 2)$.
\end{proof}

\begin{proof}[Proof of \Cref{thm:LowerBoundSingleNorm}]
We consider the family $\mathcal{U}$ from \Cref{thm:SubsetCombinations}.
For all $S\in\mathcal{U}$, we define $\mathbf{z}^S$ via $z^S_k\coloneqq n^{-\sfrac{1}{p}}\boldsymbol{1}_S(k)$, for which we have $\lVert\mathbf{z}^S\rVert_{\ell_p}=1$.
Next, we show that the sets $\mathbf{A}(\mathbf{z}^S+\varphi\mathbb{B}_p)$ with $\varphi=(2C_N+2)^{-1}$ are disjoint, where $\mathbb{B}_p$ denotes the $\ell_p$-unit ball in $\mathbb{R}^N$. To this end, let $S,T\in\mathcal{U}$ with $S\neq T$, and assume that there exist $\mathbf{z},\mathbf{z}'\in\varphi\mathbb{B}_{p}$ with $\mathbf{A}(\mathbf{z}^S+\mathbf{z})=\mathbf{A}(\mathbf{z}^T+\mathbf{z}')$.
From $\lvert S\cap T\rvert<\sfrac{n}{2}$ we infer that
\begin{align}
\begin{split}
1& < \bigl\lVert \mathbf{z}^S -\mathbf{z}^T \bigr\rVert_{\ell_p}=\bigl\lVert\mathbf{z}^S + \mathbf{z} - \Delta\bigl(\mathbf{A}(\mathbf{z}^S+\mathbf{z})\bigr)+\Delta\bigl(\mathbf{A}(\mathbf{z}^T+\mathbf{z}')\bigr) -\mathbf{z}'-\mathbf{z}^T-\mathbf{z}+\mathbf{z}'\bigr\rVert_{\ell_p}\\
&\leq\big\lVert\mathbf{z}^S+\mathbf{z}-\Delta\bigl(\mathbf{A}(\mathbf{z}^S+\mathbf{z})\bigr)\big\rVert_{\ell_p} + \big\lVert\mathbf{z}^T+\mathbf{z}'-\Delta\bigl(A(\mathbf{z}^T+\mathbf{z}')\bigr)\big\rVert_{\ell_p}+ \lVert\mathbf{z}\rVert_{\ell_p} + \lVert\mathbf{z}'\rVert_{\ell_p}.
\end{split}
\end{align}
Since $(\mathbf{A},\Delta) \in\operatorname{IO}(p,n,C_N)$ and $\vert S \vert = \vert T \vert = n$, we further have
\begin{align}
\begin{split}
1 & < C_N\sigma_n\big(\mathbf{z}^S + \mathbf{z}\big)_{\ell_p} + C_N\sigma_n \big(\mathbf{z}^T + \mathbf{z}'\big)_{\ell_p} + \lVert \mathbf{z} \rVert_{\ell_p} + \lVert \mathbf{z}' \rVert_{\ell_p}\\
&\leq C_N \lVert\mathbf{z} \rVert_{\ell_p}+C_N \lVert \mathbf{z}' \rVert_{\ell_p} + \lVert \mathbf{z} \rVert_{\ell_p}+ \lVert \mathbf{z}' \rVert_{\ell_p} \leq (2C_N+2)\varphi=1.
\end{split}
\end{align}
This is a contradiction, and thus the sets $\mathbf{A}(\mathbf{z}^S+\varphi\mathbb{B}_{p})$ are disjoint. Since $\lVert\mathbf{z}^S+\mathbf{z}\rVert_{\ell_p} \leq 1 + \lVert\mathbf{z}\rVert_{\ell_p}$, we know that $\mathbf{z}^S+\varphi\mathbb{B}_{p}\subset\left(1+\varphi\right)\mathbb{B}_{p}$ for all $S\in\mathcal{U}$, and consequently also
\begin{equation}\label{eq:EstimateSets}
    \mathbf{A}(\mathbf{z}^S+\varphi\mathbb{B}_{p})\subset\left(1+\varphi\right)\mathbf{A}(\mathbb{B}_{p}).
\end{equation}
We continue with a volumetric argument. First, we define $r\coloneqq\operatorname*{rank}(\mathbf{A})$ and recall that complex the $r$-dimensional balls have the same volume as the $2r$-dimensional real balls. Since the sets $\mathbf{A}(\mathbf{z}^S+\varphi\mathbb{B}_{p})$ are disjoint, we have by \eqref{eq:EstimateSets}
\begin{align}
\begin{split}
\lvert\mathcal{U}\rvert\varphi^{2r}\operatorname*{vol}(\mathbf{A}(\mathbb{B}_{p}))&=\lvert\mathcal{U}\rvert \operatorname*{vol}(\mathbf{A}(\varphi \mathbb{B}_{p}))=\sum\limits_{S\in\mathcal{U}}\operatorname*{vol}\bigl(\mathbf{A}(\mathbf{z}^S+\varphi\mathbb{B}_{p})\bigr)\\
&\leq\operatorname*{vol}\left(\left(1+\varphi\right)\mathbf{A}(\mathbb{B}_{p})\right)=\left(1+\varphi\right)^{2r}\operatorname*{vol}(\mathbf{A}(\mathbb{B}_{p})).
\end{split}
\end{align}
Thus, we see that $\lvert\mathcal{U}\rvert\varphi^{2r}\leq\left(\varphi+1\right)^{2r}$.
This further implies $\lvert\mathcal{U}\rvert \leq (2C_N+3)^{2r}$.
Since $r\leq m$ and $\lvert\mathcal{U}\rvert\geq(\sfrac{N}{4n})^{\sfrac{n}{2}}$, we then obtain $(\sfrac{N}{4n})^{\sfrac{n}{2}} \leq\left(2C_N+3\right)^{2m}$.
Finally, applying the logarithm on both sides leads to
\begin{gather}
    m\geq\frac{n\log(\sfrac{N}{4n})}{4\log(2C_N+3)}.
\end{gather}
\end{proof}

\noindent\textbf{Acknowledgment.} The authors MM and TS are supported by the ESF, being co-financed by the European Union and from tax revenues on the basis of the budget adopted by the Saxonian State Parliament. KP would like to acknowledge support by the German Research Foundation (DFG 403/4-1) and the budget program
of the NAS of Ukraine ``Support for the development of priority areas
of research'' (KPKVK 6541230). KP and TU would like to thank Ben Adcock for pointing out reference \cite{AdcockBrugWebster22,BDJR21} and bringing square root Lasso as a noise-blind alternative to the authors' attention during a discussion within the session {\em Function Recovery and Discretization Problems} organized by David Krieg and KP at the conference MCQMC24 in Waterloo (CA). 


\fontsize{11}{11}\selectfont
\bibliographystyle{abbrv}

\end{document}